\def\section{\@startsection{section}{1}%
	\z@{1\linespacing\@plus\linespacing}{1\linespacing}
	{\bfseries
		\centering}}
\def\@secnumfont{\bfseries}
\def\subsubsection{\@startsection{subsubsection}{3}%
	\z@{.5\linespacing\@plus.7\linespacing}{.1\linespacing}%
	{\normalfont\itshape}}
\def\subsection{\@startsection{subsection}{3}%
	\z@{.5\linespacing\@plus.7\linespacing}{.1\linespacing}%
	{\normalfont\itshape}}
\definecolor{Green}{rgb}{0,0.5,0.0}
\newtheorem{theorem}{Theorem}
\newtheorem{lemma}[theorem]{Lemma}
\theoremstyle{definition}
\newtheorem{definition}[theorem]{Definition}
\theoremstyle{remark}
\newtheorem{remark}[theorem]{Remark}
\DeclareFontFamily{U}{matha}{\hyphenchar\font45}
\DeclareFontShape{U}{matha}{m}{n}{
	<5> <6> <7> <8> <9> <10> gen * matha
	<10.95> matha10 <12> <14.4> <17.28> <20.74> <24.88> matha12}{}
\DeclareSymbolFont{matha}{U}{matha}{m}{n}
\DeclareFontFamily{U}{mathx}{\hyphenchar\font45}
\DeclareFontShape{U}{mathx}{m}{n}{
	<5> <6> <7> <8> <9> <10>
	<10.95> <12> <14.4> <17.28> <20.74> <24.88>
	mathx10}{}
\DeclareSymbolFont{mathx}{U}{mathx}{m}{n}
\DeclareMathDelimiter{\vvvert}{0}{matha}{"7E}{mathx}{"17}
\DeclareFontFamily{OMX}{MnSymbolE}{}
\DeclareSymbolFont{MnLargeSymbols}{OMX}{MnSymbolE}{m}{n}
\DeclareFontShape{OMX}{MnSymbolE}{m}{n}{
	<-6>  MnSymbolE5
	<6-7>  MnSymbolE6
	<7-8>  MnSymbolE7
	<8-9>  MnSymbolE8
	<9-10> MnSymbolE9
	<10-12> MnSymbolE10
	<12->   MnSymbolE12}{}
\DeclareFontShape{OMX}{MnSymbolE}{b}{n}{
	<-6>  MnSymbolE-Bold5
	<6-7>  MnSymbolE-Bold6
	<7-8>  MnSymbolE-Bold7
	<8-9>  MnSymbolE-Bold8
	<9-10> MnSymbolE-Bold9
	<10-12> MnSymbolE-Bold10
	<12->   MnSymbolE-Bold12}{}
\let\llangle\@undefined
\let\rrangle\@undefined
\DeclareMathDelimiter{\llangle}{\mathopen}%
{MnLargeSymbols}{'164}{MnLargeSymbols}{'164}
\DeclareMathDelimiter{\rrangle}{\mathclose}%
{MnLargeSymbols}{'171}{MnLargeSymbols}{'171}
\begin{document}
	
	\title[Wick SHE]
	{A Feynman-Kac approach for the spatial derivative of the solution to the Wick stochastic heat equation driven by time homogeneous white noise}
	
	\author{Hyun-Jung Kim}
	\curraddr[H.-J. Kim]{Department of Mathematics, UCSB\\
		Santa Barbara, CA 93106, USA}
	\email[H.-J. Kim]{hjkim@ucsb.edu}
	\urladdr{https://sites.google.com/view/hyun-jungkim}
	
	\author{Ramiro Scorolli}
	\curraddr[R. Scorolli]{Dipartimento di Scienze Statistiche Paolo Fortunati, Università di Bologna, Bologna, Italy.}
	\email[R. Scorolli]{ramiro.scorolli2@unibo.it}
	
	\date{\today}
	
	\subjclass[2010]{Primary 60H10; Secondary 60H30; 60H40; 60H05}

\keywords{Wick product, Stochastic heat equation, Space-only white noise, Wiener chaos, Feynman-Kac formula, Multiple Wiener integral, H\"older regularity, White noise analysis, Malliavin calculus}

\begin{abstract}
	We consider the (unique) mild solution $u(t,x)$ of a 1-dimensional stochastic heat equation  on $[0,T]\times\mathbb R$ driven by time-homogeneous white noise in the Wick-Skorokhod sense. The main result of this paper is the computation of the spatial derivative of $u(t,x)$, denoted by $\partial_x u(t,x)$, and its representation as a Feynman-Kac type closed form. The chaos expansion of $\partial_x u(t,x)$ makes it possible to find its (optimal) H\"older regularity especially in space. 
\end{abstract}

\maketitle


\section{Introduction}

As an extension of the paper \cite{S2021}, we will further investigate the (unique) mild solution of the 1-dimensional stochastic heat equation (SHE):
\begin{equation}\label{eq:1dSHE}
	\begin{cases}
		\partial_t u(t,x)=\displaystyle\frac{1}{2}\partial^2_{xx} u(t,x)+u(t,x)\diamond \dot{W}(x), \quad t\in (0,T],\ x\in \mathbb{R}, \\
		u(0,x)=u_0(x),\quad x\in \mathbb{R},
	\end{cases}
\end{equation}
where $T>0$, $u_0$ is a function satisfying certain conditions, $\dot{W}(x)$ is a space-only Gaussian \textit{white noise} on a complete probability space $\left(\Omega,\mathcal{F},\mathbb{P}^W\right)$, and $\diamond$ stands for the \textit{Wick product}. In other words, the corresponding stochastic integration in \eqref{eq:1dSHE} is interpreted in the Wick-Skorokhod sense. We postpone all technical definitions to the following section.

	\begin{definition}\label{def:mildsol}
		We say $u:[0,T]\times \mathbb{R}\times \Omega \to \mathbb{R}$ is said to be a \textit{mild solution} of \eqref{eq:1dSHE} if for any fixed $(t,x)\in [0,T]\times \mathbb{R}$, $u(t,x)\in L^2\left(\mathbb{P}^W\right)$ and it satisfies
		\begin{align}\label{eq:mildsol}
		u(t,x)= \int_{\mathbb{R}}p(t,x-y)u_0(y)dy+ \int_0^t\int_{\mathbb{R}} p(t-s,x-y)u(s,y)\diamond \dot W(y)dyds,\ \mbox{$\mathbb{P}^W$-almost surely},
		\end{align}
		where $p(t,x):=\displaystyle\frac{1}{\sqrt{2\pi t}}e^{-\frac{x^2}{2t}}$ is the Gaussian heat kernel.
	\end{definition}
Let $\left\{u(t,x)\right\}_{(t,x)\in [0,T]\times \mathbb{R}}$ be a mild solution of \eqref{eq:1dSHE}. Then for
	any fixed $(t,x)$, the random variable $u(t,x)$ admits the following multiple Wiener chaos expansion (e.g. \cite{H2002}, \cite{HHNT2015} or \cite{U1996}):
	\begin{align}\label{eq:MWsol}
	u(t,x)=\sum_{n=0}^{\infty}I_n\left(F_n^{\mathtt{MW}}(t,x)\right),
	\end{align}
	where $I_n$ is the $n$-th multiple Wiener integral with respect to $W$,
	\begin{align*}
	&F_0^{\mathtt{MW}}(t,x)=\displaystyle\int_{\mathbb{R}}p(t,x-y)u_0(y)\;dy;\\
	&F_n^{\mathtt{MW}}(t,x;y_1,\dots,y_n)\\
	&=\frac{1}{n!}\int_{[0,t]^n} p\left(t-r_{\rho(n)},x-y_{\rho(n)}\right)\times \cdots
	\times  p\left(r_{\rho(2)}-r_{\rho(1)},y_{\rho(2)}-y_{\rho(1)}\right)F_0^{\mathtt{MW}}(r_{\rho(1)},x_{\rho(1)})\; d\mathbf{r},\ n\geq 1,
	\end{align*}
	and $\rho$ denotes the permutation of $\{1,\dots,n\}$ such that $0<r_{\rho(1)}<\cdots<r_{\rho(n)}<t$. For simplicity, we have denoted $d\mathbf{r}:=dr_1dr_2\cdots dr_n$.
	
	To distinguish among different representations of the mild solution, let us call \eqref{eq:MWsol} the \textbf{\textit{multiple Wiener solution}} $u^{\mathtt{MW}}(t,x)$ of \eqref{eq:1dSHE}. There are a few papers considering this representation:
	\begin{itemize}
		\item[(i)] The paper \cite[Theorem 3.1]{U1996} shows that $u^{\mathtt{MW}}$ is the unique mild solution in 
		$C\left([0,T];L^2(\mathbb{R});L^2(\Omega)\right)$ if $u_0\in L^2(\mathbb{R})$ by showing,
		$$\sup_{t\in [0,T]}\sum_{n=0}^{\infty}n!\int_{\mathbb{R}}\left\|F^{\mathtt{MW}}_n(t,x;\bullet)\right\|_{L^2(\mathbb{R}^{n})}^2dx\leq C\|u_0\|_{L^{2}(\mathbb{R})}^2<\infty.$$
		Note that $u_0\in L^2(\mathbb{R})$ does not cover $u_0\equiv 1$.
		\item[(ii)] When $u_0\in L^{\infty}(\mathbb{R})$, \cite[Section 4]{H2002} shows that
			$$\sup_{(t,x)\in [0,T]\times \mathbb{R}}\sum_{n=0}^{\infty}n!\left\|F^{\mathtt{MW}}_n(t,x;\bullet)\right\|_{L^2(\mathbb{R}^n)}^2\leq C\|u_0\|_{L^{\infty}(\mathbb{R})}^2<\infty.$$
			Hence, we can say that $u^{\mathtt{MW}}$ is the unique mild solution in 
			$C\left([0,T]\times\mathbb{R};L^2(\Omega)\right)$ if $u_0\in L^{\infty}(\mathbb{R})$.
	\end{itemize}
	
	There is an alternative chaos expansion of the mild solution $u$ (e.g. \cite[Theorem 3.11]{LR2009}):
		\begin{align}\label{eq:CSsol}
	u(t,x)=\sum_{\alpha\in \mathcal{J}} u^{\mathtt{CS}}_{\alpha}(t,x)\xi_{\alpha},
	\end{align}
	where $\xi_{\alpha}$ and $\mathcal{J}$ are defined in Section \ref{sec:C-Hermite}. Letting $\mathbb T^n_{[0,t]}:=\{0\leq s_1\leq\cdots\leq s_n\leq t\}$, we can write 
	$u^{\mathtt{CS}}_{(\mathbf{0})}(t,x)=\displaystyle\int_{\mathbb{R}}p(t,x-y)u_0(y)dy$, and for $|\alpha|=n\geq 1$,
	\begin{align*}
	u^{\mathtt{CS}}_{\alpha}(t,x)=\sqrt{n!}\int_{\mathbb T^n_{[0,t]}}\int_{\mathbb R^n} p(t-s_n,x-y_n)\cdots p(s_2-s_1,y_2-y_1)u^{\mathtt{CS}}_{(\mathbf{0})}(s_1,y_1)\; \mathfrak{e}_{\alpha}(y_1,\dots,y_n)\;d\mathbf{s} d\mathbf{y},
	\end{align*} 
	for	$\alpha\in \mathcal J_n:=\{\alpha\in\mathcal J:|\alpha|=n\}$,
	and $\{\mathfrak{e}_{\alpha},\alpha\in \mathcal{J}_n\}$ forms an orthonormal basis of the symmetric part of $L^2(\mathbb R^n)$. We will call \eqref{eq:CSsol} the \textbf{\textit{chaos solution}} $u^{\mathtt{CS}}(t,x)$ of \eqref{eq:1dSHE}. The existence and uniqueness of this representation can be proved by showing the following:
	\begin{itemize}
		\item[(i')] We can prove that $u^{\mathtt{CS}}$ is the unique mild solution in $C\left([0,T];L^2(\mathbb{R});L^2(\Omega)\right)$ when $u_0\in L^2(\mathbb{R})$ by showing (c.f. \cite[Theorem 4.1]{KL2017})
		\begin{align}\label{eq:CSbdd-1}
		\sup_{t\in[0,T]} \sum_{n=0}^{\infty} \sum_{\alpha\in \mathcal{J}_n} \left\|u_{\alpha}^{\mathtt{CS}}(t,\bullet)\right\|_{L^2(\mathbb{R})}^2\leq C \|u_0\|_{L^2(\mathbb{R})}^2<\infty.
		\end{align}
		\item[(ii')] We can also show that $u^{\mathtt{CS}}$ is the unique mild solution in $C\left([0,T]\times\mathbb{R};L^2(\Omega)\right)$ if $u_0\in L^{\infty}(\mathbb{R})$ (c.f. \cite[Theorem 4.3]{KL2017}) by achieving 
		\begin{align}\label{eq:CSbdd-2}
		\sup_{(t,x)\in[0,T]\times \mathbb{R}} \sum_{n=0}^{\infty} \sum_{\alpha\in \mathcal{J}_n} \left|u_{\alpha}^{\mathtt{CS}}(t,x)\right|^2\leq C \|u_0\|_{L^{\infty}(\mathbb{R})}^2<\infty.
		\end{align}
	\end{itemize}
Indeed, \eqref{eq:CSbdd-1} and \eqref{eq:CSbdd-2} can be easily obtained as follows:
\begin{itemize}
	\item[\eqref{eq:CSbdd-1}] To use the same argument as \cite[Theorem 4.1]{KL2017}, it is enough to show $$U_0:=\int_{\mathbb{R}} \left(\int_{\mathbb{R}} p(s,y-z_1)u_0(z_1)dz_1\right)\cdot \left(\int_{\mathbb{R}} p(s,y-z_2)u_0(z_2)dz_2\right) dy\leq \|u_0\|^2_{L^2(\mathbb{R})},$$ and it is clear by semigroup property and H\"older inequality,
	\begin{align*}
	U_0&=\int_{\mathbb{R}}\int_{\mathbb{R}} p(s+r,z_1-z_2) u_0(z_1)u_0(z_2)dz_1dz_2\\
	&=\int_{\mathbb{R}} p(s+r,z_1)\int_{\mathbb{R}} u_0(z_1+z_2)u_0(z_2)dz_2dz_1\leq \|u_0\|^2_{L^2(\mathbb{R})}.
	\end{align*}
	\item[\eqref{eq:CSbdd-2}] To use the same argument as \cite[Theorem 4.3]{KL2017}, it is enough to show $\left|\int_{\mathbb{R}}p(t,x-y)u_0(y)dy\right|\leq \|u_0\|_{L^{\infty}(\mathbb{R})}$, and it automatically follows from the fact $\int_{\mathbb{R}}p(t,x)dx=1$.	
\end{itemize}

Then, it is not surprising that $u^{\mathtt{MW}}=u^{\mathtt{CS}}$ if $u_0\in L^{\infty}(\mathbb{R})$ since the mild solution is unique in $C\left([0,T]\times\mathbb{R};L^2(\Omega)\right)$.
	
We now discuss one more possible representation of the mild solution. In fact, the condition \eqref{eq:mildsol} is equivalent to the following because $\mathbb{D}^{1,2}$ is dense in $L^2\left(\mathbb{P}^W\right)$. (e.g. \cite[Exercise 1.1.7 and Corollary 1.5.1]{N2006}): For any random variable $F\in \mathbb{D}^{1,2}$,
\begin{align}\label{eq:mildsol-1}
\mathbb{E}\left[F\cdot u(t,x)\right]=\mathbb{E}[F]\cdot \int_{\mathbb{R}}p(t,x-y)u_0(y)dy+ \mathbb{E}\left[F\cdot \int_0^t\int_{\mathbb{R}} p(t-s,x-y)u(s,y)\diamond \dot W(y)dyds\right].
\end{align}
Here, Malliavin derivative $D$ and the Sobolev-Malliavin space $\mathbb{D}^{1,2}$ are defined in Section \ref{sec:Malliavin}.

Moreover, it is known that \eqref{eq:mildsol-1} is equivalent to
\begin{align}\label{eq:mildsol-2}
\mathbb{E}\left[F\cdot u(t,x)\right]=\mathbb{E}[F]\cdot \int_{\mathbb{R}}p(t,x-y)u_0(y)dy+\mathbb{E}\left[\left\langle \int_0^t p(t-s,x-\bullet)u(s,\bullet)ds,D_{(\bullet)}F \right\rangle_{L^2(\mathbb{R})}\right],
\end{align}
if $\mathbb{E}\left(F\cdot \left(\int_{\mathbb{R}}h(y)\diamond \dot W(y)dy\right)\right)=\mathbb{E}\left(\langle DF, h\rangle_{L^2(\mathbb{R})}\right)$ for $h\in L^2\left(\mathbb{P}^W;L^2(\mathbb{R})\right)$ and $\int_{\mathbb{R}} h(y)\diamond \dot W(y)dy\in L^2\left(\mathbb{P}^W\right)$ (e.g. \cite[Section 2.5]{NP2012}).

Using \eqref{eq:mildsol-2} and a Wong-Zakai-type approximation, the paper \cite{S2021} gives a Feynman-Kac representation of the unique mild solution of \eqref{eq:1dSHE} when $u_0\in L^{\infty}(\mathbb{R})$. This is given by
\begin{align*}
u(t,x)=\mathbb E^B\left[u_0(B_t^x)\exp\{\Psi_{t,x}\}\right],
\end{align*}
where $\{B_t^x\}_{t\geq 0}$ is a one-dimensional Brownian motion starting at $x$, and for fixed $(t,x)\in [0,T]\times\mathbb R$, the random variable $\Psi_{t,x}$ is given by 
\begin{align*}
\Psi_{t,x}:=\int_{\mathbb R} L_y^x(t)dW(y)-\frac{1}{2}\int_{\mathbb R}|L_y^x(t)|^2 dy.
\end{align*}
Here $L_a^x(t)$ denotes the local time of $\left\{B_s^x\right\}_{s\geq 0}$ at level $a$ and time $t$. Let us call the Feynman-Kac representation the \textbf{\textit{Feynman-Kac solution}} $u^{\mathtt{FK}}(t,x)$ of \eqref{eq:1dSHE}.

Combining all, as long as $u_0\in L^{\infty}(\mathbb{R})$, we can say
\begin{align}\label{eq:equivSol}
u:=u^{\mathtt{FK}}=u^{\mathtt{MW}}=u^{\mathtt{CS}}\in C\left([0,T]\times\mathbb{R};L^2(\Omega)\right).
\end{align}
In this paper,
we will provide an alternative proof for the equivalence \eqref{eq:equivSol} using a more direct approach.

The main motivation for the current article is as follows: As we stated above,
the equation \eqref{eq:1dSHE} may have three possible representations for the unique mild solution $u$, namely (I) Feynman-Kac solution $u^{\mathtt{FK}}$, (II) multiple Wiener-It\^o integral solution $u^{\mathtt{MW}}$, and (III) chaos solution $u^{\mathtt{CS}}$. Unfortunately, there is no enough discussion on  H\"older regularity of the mild solution. In particular,
\begin{itemize}
	\item[(I)] There is no H\"older regularity result for $u^{\mathtt{FK}}$ in the existing literature. 
	\item[(II)] For $u^{\mathtt{MW}}$, \cite[Theorem 4.1]{U1996} proves that $u^{\mathtt{MW}}\in \mathcal{C}^{1/2-\varepsilon,1/2-\varepsilon}([0,T],\mathbb{R})$ for any small $\varepsilon>0$ if $u_0\in C^1_b(\mathbb{R})\cap L^2(\mathbb{R})$. Here, $C^1_b(\mathbb{R})$ denotes the space of all bounded differentiable functions on $\mathbb{R}$ with bounded continuous derivatives.
	\item[(III)] On the one hand, no one discusses the regularity of $u^{\mathtt{CS}}$ on the whole line. On the other hand, the paper \cite{KL2017} discuss the same equation as \eqref{eq:1dSHE}, but on a bounded domain, say $[0,\pi]$; the authors show that there exists a unique mild solution (using chaos expansion) $u_b^{\mathtt{CS}}\in C([0,T];L^2([0,\pi]);L^2(\Omega))$ if $u_0\in L^2([0,\pi])$, and moreover $u_b^{\mathtt{CS}}\in \mathcal{C}^{3/4-\varepsilon,3/2-\varepsilon}([0,T]\times [0,\pi])$ for any small $\varepsilon>0$ if $u_0\in \mathcal{C}^{3/2}([0,\pi])$.
\end{itemize}
Since H\"older continuity is a local property, it is natural to expect that $u^{\mathtt{CS}}\in \mathcal{C}^{3/4-\varepsilon,3/2-\varepsilon}([0,T]\times \mathbb{R})$ for any small $\varepsilon>0$ (under a suitable initial condition on $u_0$) like the bounded case $u_b^{\mathtt{CS}}$. Furthermore, it is impossible that the other representations $u^{\mathtt{FK}}$ and $u^{\mathtt{MW}}$ have a different regularity from the one of $u^{\mathtt{CS}}$ (by uniqueness). 
In this sense, we would say that the existing H\"older regularity results of the mild solution on $\mathbb{R}$ should be improved, and in this paper, we will suggest an idea of how to get the desired result. We emphasize that the regularity almost $3/4$ in time and almost $3/2$ in space is optimal in the classical PDE sense, since $\dot W$ is understood to have regularity $-1/2-\varepsilon$ for any small $\varepsilon>0$ (c.f. \cite[Lemma 1.1]{HL2015}).

The main feature of this paper is to find the optimal spatial regularity of the unique mild solution $u$. The first task is to find $\partial_x u$ and check if it is well-defined. One can compute $\partial_x u$ from $u^{\mathtt{CS}}$ using the same argument as \cite{KL2017}, but we will focus on the Feynman-Kac representation and compute the spatial derivative of $u$ using $u^{\mathtt{FK}}$ as a main part of this paper. This approach allows us to obtain a Feynman-Kac-type closed formula for $\partial_x u$. 
We remark that we can also derive
the chaos decomposition of $\partial_x u$ using $u^{\mathtt{FK}}$, and it is exactly the same as the one after differentiating $u^{\mathtt{CS}}$ with respect to $x$ directly.
With this in hand, we can achieve the optimal H\"older regularity of $\partial_x u$ that is almost $1/4$ in time and almost $1/2$ in space.


\section{Preliminaries}
\subsection{Elements of white noise analysis}

In this section, we will give a brief outline of the white noise tools which will be used in this article. The interested reader for more details is referred to \cite{HKSP1990} and \cite{K2018}.

For $n\in \mathbb{N}$, let $\mathscr{S}(\mathbb R^n)$ be the Schwartz space of rapidly decreasing functions on $\mathbb R^n$, and $\mathscr{S}'(\mathbb R^n)$ the dual of $\mathscr{S}(\mathbb R^n)$, which is called the space of tempered distributions. We further define the white noise probability space by $(\mathscr{S}'(\mathbb R),\mathcal B,\mu)$, where $\mathcal B$ is the Borel $\sigma$-algebra on $\mathscr{S}'(\mathbb R)$, i.e. the $\sigma$-algebra generated by the \textit{cylindrical sets}, and $\mu$ is the \textit{standard Gaussian measure in $\mathscr{S}'(\mathbb R)$} (see \cite[Section 3.1]{K2018} for more details). Specifically, the measure $\mu$ satisfies
\begin{align*}
\int_{\mathscr{S}'(\mathbb R)}e^{i\langle\omega,\varphi\rangle}d\mu(\omega)=e^{-\frac{1}{2}|\varphi|_0^2},\quad\quad\varphi\in\mathscr{S}(\mathbb R),
\end{align*}
where $|\bullet|_0$ denotes the norm in $L^2(\mathbb R)$.

Let $W$ denote the canonical coordinate process (or Wiener integral) on $\mathscr{S}'(\mathbb R)$ given by 
$$W_{\phi}=\langle \omega,\phi\rangle,\; \phi\in \mathscr{S}(\mathbb R),\; \omega\in \mathscr{S}'(\mathbb R).$$ Here, $\langle \bullet,\bullet \rangle$ denotes the dual pairing between $\mathscr{S}(\mathbb R)$ and $\mathscr{S}'(\mathbb R)$.

Note that we can extend $W$ continuously in $L^2(\mu)$ to $L^2(\mathbb R)$ as
\begin{align*}
	\langle\omega,\phi\rangle:=L^2(\mu)-\lim_{k\to\infty}\langle \omega,\phi_k\rangle,\quad \phi\in L^2(\mathbb R),
\end{align*}
where $\{\phi_k\}_{k\in\mathbb N}$ is any sequence in $\mathscr{S}(\mathbb R)$ such that $\phi_k\to\phi$ in $L^2(\mathbb R)$.
In particular, if we define 
\begin{align}\label{eq:Bm}
	W(x)(\omega):=
	\begin{cases}
		\langle \omega, \chi_{[0,x]}\rangle,\quad \mbox{if $x\geq 0,\ \omega\in \mathscr{S}'(\mathbb R)$};\\
		-\langle \omega, \chi_{[x,0]}\rangle,\quad \mbox{if $x<0,\ \omega\in \mathscr{S}'(\mathbb R)$,}
	\end{cases}
\end{align}
it is easy to check that $\{W(x)\}_{x\in\mathbb R}$ is a Brownian motion.

\subsubsection{Chaos expansion in terms of multiple Wiener integrals}\label{sec:C-MW}

It is well-known that any random variable $F\in L^2(\mu)$ can be written as
\begin{align*}
	F=\sum_{n=0}^{\infty} I_n(f_n),
\end{align*}
where $I_n$ is the multiple Wiener integral of order $n$ with respect to the Brownian motion in \eqref{eq:Bm}, and $f_n$ is a symmetric element of $L^2(\mathbb R^n)$. This is the Wiener-It\^o-Segal isomorphism between square integrable \textit{Brownian functionals} and the the symmetric Fock space \cite{J1997}, i.e.
\begin{align*}
	L^2(\mu)\cong \bigoplus_{n=0}^{\infty} \operatorname{Sym}L^2(\mathbb R)^{\otimes n}.
\end{align*} 
It turns out that the following isometry property holds
\begin{align*}
	\|F\|^2_2=\sum_{n=0}^{\infty}n!|f_n|_{0,n}^2,
\end{align*}
where $\|\bullet\|_2$ denotes the norm in $L^2(\mu)$ and $|\bullet|_{0,n}$ denotes the norm in $L^2(\mathbb R^n), n\in\mathbb N$, whenever $n=1$ we shall omit the second sub-index.

\subsubsection{\textbf{Chaos expansion in terms of Hermite polynomials}}\label{sec:C-Hermite}

Let $\{e_j\}_{j\in\mathbb N}\subset \mathscr{S}(\mathbb R)$ be the family of \textit{Hermite functions} defined by 
\begin{align*}
	e_j(x):=(-1)^{j-1}\left(\sqrt{\pi}2^{j-1} (j-1)!\right)^{-1/2}e^{x^2/2}\frac{d^{(j-1)}}{dx^{(j-1)}}e^{-x^2},\ j\in\mathbb N,
\end{align*}
{where $\displaystyle\frac{d^0}{dx^0}$ is the identity operator.}
It is known that $\{e_j\}_{j\in\mathbb N}$ forms a complete orthonormal basis (CONB) of $L^2(\mathbb R)$.
We next let $\mathcal J:=(\mathbb N_0^{\mathbb N})_c$ be the collection of multi-indices $\alpha=(\alpha_1,\alpha_2,\dots)$ such that every $\alpha_j$ is a non-negative integer and there are only finitely many non-zero components. In this case, we define a few notations: 
\begin{itemize}
	\item $(\mathbf{0})$ is the multi-index with all zeroes;
	\item $\alpha^-_{(j)}:=
	\left(\alpha_1,\alpha_2, \dots,\mbox{max}(\alpha_j-1,0),\alpha_{j+1},\dots\right)\in \mathcal{J}$;
	\item $\alpha^{+}_{(j)}:=(\alpha_1,\alpha_2, \dots,\alpha_j+1,\alpha_{j+1},\dots)\in \mathcal{J};$
	\item $|\alpha|:=\sum_{j=1}^{\infty} \alpha_j$;
	\item $\alpha!:=\prod_{j=1}^{\infty} \alpha_j!$.
\end{itemize} 
We also define the collection of random variables $\Xi:=\{\xi_{\alpha},\alpha\in\mathcal J\}$ by
\begin{align*}
	\xi_{\alpha}:=\prod_{j=1}^{\infty} \left(\frac{H_{\alpha_j}(W_{e_j})}{\sqrt{\alpha_j!}}\right),
\end{align*}
where 
\begin{align*}
	H_n(x)=(-1)^n e^{x^2/2}\frac{d^n}{dx^n}e^{-x^2/2}
\end{align*}
is the Hermite polynomial of order $n$.

One of the famous Cameron and Martin's theorems \cite{CM1947} states that the family $\Xi$ forms an orthonormal basis in $L^2(\mu)$, and thus for $F\in L^2(\mu)$, we have the following chaos expansion
\begin{align*}
	F=\sum_{\alpha\in\mathcal J} F_{\alpha}\xi_{\alpha}\quad \mbox{and}\quad \|F\|_2^2=\sum_{\alpha\in\mathcal J}F_{\alpha}^2,\quad \mbox{where}\ F_{\alpha}:=\mathbb E[F\xi_{\alpha}].
\end{align*}

In fact, both approaches to the chaos expansion for $F\in L^2(\mu)$ in Sections \ref{sec:C-MW} and \ref{sec:C-Hermite} are equivalent, and we can go from one to the other without particular difficulties. For example, see Section \ref{sec:MW-CS} below.

\subsubsection{\textbf{S-transform, Wick product, and generalized random variables}}

Now we introduce one of the main tools we will employ in our proofs, namely the S-transform. Let $F\in L^2 (\mu)$ and $\phi\in L^2(\mathbb R)$. Then the S-transform of $F$ is given by
\begin{align*}
	S(F)(\phi):=\mathbb E\left[F\times\mathcal E(\phi)\right],\quad \mbox{where}\quad \mathcal E(\phi):= \exp\bigg\{W_{\phi}-\frac{1}{2}|\phi|_0^2\bigg\}.
\end{align*}
Note that $\mathcal E(\phi)$ 
is called \textit{stochastic exponential} or \textit{Wick exponential} (e.g. \cite{HOUZ2010}).

In certain applications, one may be interested in spaces that are larger than $L^2(\mu)$, and one is naturally led to consider spaces of \textit{generalized} random variables (see for instance \cite{HKSP1990}, \cite{H2016}, \cite{K2018}). In this article, we will solely consider the space of \textit{Hida distributions}, and will briefly introduce them in the following.

Let $A$ be the operator given by $A=-\displaystyle\frac{d^2}{dx^2}+x^2+1$, and for $F\in L^2(\mu)$ with $F=\displaystyle\sum_{n=0}^{\infty} I_n(f_n)$ satisfying
\begin{align*}
	\sum_{n=0}^{\infty} n!|A^{\otimes n}f_n|_{0,n}^2<\infty,
\end{align*}
we define $\Gamma(A)F\in L^2(\mu)$ by
\begin{align*}
	\Gamma(A)F:=\sum_{n=0}^{\infty} I_n(A^{\otimes n}f_n).
\end{align*}
Sometimes $\Gamma(A)$ is referred to as the \textit{second quantization} of the operator $A$. 

\begin{remark}
It is known (e.g. \cite[Theorem 2]{S1971}) that if we define $|\bullet|_p:=|A^{p}\bullet|_0$, the family of \textit{p-seminorms} $\left\{|\bullet|_p,\; p\geq 0\right\}$ are equivalent to the usual seminorms in the Schwartz space $\mathscr{S}(\mathbb R)$, i.e. they generate the same topology.
\end{remark}

\begin{definition}
	Let $(S_p)^*$ be the completion of $L^2(\mu)$ with respect to the norm $$\|\bullet\|_{-p}:=\left\|\Gamma(A^{-p})\bullet\right\|_2,\; p>0.$$
	Then the space of \textit{Hida distributions} is given by
	\begin{align*}
		(S)^*:=\bigcup_{p\geq 0} (S_p)^*.
	\end{align*}
\end{definition}
Any element $\Phi\in (S)^*$ can be represented as the formal series:
\begin{align*}
	\Phi=\sum_{n=0}^{\infty}I_n(F_n),\quad F_n\in \mathscr{S}'_{sym}(\mathbb R^n):=\operatorname{Sym} \mathscr{S}'(\mathbb R)^{\otimes n}
\end{align*}
such that  
\begin{align*}
	\sum_{n=0}^{\infty}n!|(A^{-p})^{\otimes n}F_n|_{0,n}^2<\infty.
\end{align*}

In this setting, we can give a proper meaning to the \textit{white noise} $\partial_{x}W(x)$ or $\dot W(x)$ as a Hida distribution and in an slight abuse of notation, we will denote
\begin{align*}
	\dot W(x;\omega):=\langle \omega,\delta_x\rangle
\end{align*}

Here, $\delta_x$ stands for the Dirac-delta function.
We note that \cite[Section 3.4]{K2018} that $\dot W\in (S_p)^*$ for any $p>5/12$.

\begin{remark}
	The S-transform can be extended naturally to the space of Hida distributions by a duality argument, i.e. for $\Phi\in(S)^*$, we define the S-transform of $\Phi$ as
	\begin{align*}
		S(\Phi)(\phi):=\llangle \Phi,\mathcal E(\phi)\rrangle,\quad \phi\in \mathscr{S}(\mathbb R),
	\end{align*}
	where $\llangle\bullet,\bullet\rrangle$ stands for the bilinear dual paring between the space of Hida distributions $(S)^*$ and its dual space, denoted by $(S)$. Note that $(S)$ is called the space of Hida test functions (see \cite{K2018} for further details).
\end{remark}

\begin{definition}\label{U-functional}
	A function $F:\mathscr{S}(\mathbb R)\to\mathbb C$ is called a U-functional if
	\begin{enumerate}
		\item For every $\phi,\varphi\in\mathscr{S}(\mathbb R)$ the mapping $\mathbb R\ni\lambda\mapsto F(\lambda\phi+\varphi)\in\mathbb C$ has an entire extension to $z\in\mathbb C$.
		\item There are constants $0<K_1,K_2,p<\infty$ such that 
		$$|F(\varphi)|\leq K_1\exp\left(K_2|\varphi|_p^2\right),\quad \varphi\in \mathscr{S}(\mathbb R).$$
	\end{enumerate}
\end{definition}
We are now ready to introduce a characterization result for the space $(S)^*$.
\begin{theorem}\cite[theorem 1.2]{PS1991}\label{thm:StransformBijection}
	The S-transform defines a bijection between the space $(S)^*$ and the space of U-functionals.
\end{theorem}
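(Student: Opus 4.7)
The proof splits into two parts: (a) well-definedness and injectivity of $S$ on $(S)^*$, and (b) surjectivity via explicit reconstruction of kernels from a given U-functional.

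For (a), given $\Phi=\sum_{n=0}^{\infty}I_n(F_n)\in(S)^*$, expand $\mathcal E(\phi)=\sum_n\tfrac{1}{n!}I_n(\phi^{\otimes n})$ and use the Wiener--It\^o orthogonality together with the dual pairing $\mathscr S'(\mathbb R^n)\times\mathscr S(\mathbb R^n)$ to obtain the formal identity
\[
S(\Phi)(\phi)=\sum_{n=0}^{\infty}\langle F_n,\phi^{\otimes n}\rangle.
\]
Since each summand is an $n$-homogeneous polynomial in $\phi$, the entirety of $\lambda\mapsto S(\Phi)(\lambda\phi+\varphi)$ reduces to absolute convergence of the corresponding power series on every complex disk. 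Using self-adjointness of $A$ to shift operators, $|\langle F_n,\phi^{\otimes n}\rangle|\leq|(A^{-p})^{\otimes n}F_n|_{0,n}\cdot|\phi|_p^n$, and then Cauchy--Schwarz on the $n$-sum against the generating function $\sum_n\tfrac{1}{n!}|\phi|_p^{2n}=e^{|\phi|_p^2}$ yields $|S(\Phi)(\phi)|\leq\|\Phi\|_{-p}\,e^{|\phi|_p^2/2}$, which is precisely the U-functional bound. Injectivity follows from the density of the linear span of $\{\mathcal E(\phi):\phi\in\mathscr S(\mathbb R)\}$ in the Hida test function space $(S)$, which in turn is verified by matching chaos expansions on both sides of $\llangle\Phi,\mathcal E(\phi)\rrangle=0$.

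For (b), given a U-functional $F$, I would reconstruct candidate kernels by polarization: for $n\geq 0$ define the symmetric $n$-linear form
\[
n!\,F_n(\phi_1,\dots,\phi_n):=\frac{\partial^n}{\partial\lambda_1\cdots\partial\lambda_n}\bigg|_{\lambda_1=\cdots=\lambda_n=0}F(\lambda_1\phi_1+\cdots+\lambda_n\phi_n),
\]
which is well-defined thanks to the entire extension in Definition \ref{U-functional}(1). Applying Cauchy's integral formula to the entire function $\lambda\mapsto F(\lambda\phi)$ on a circle of radius $r$, together with the exponential bound from Definition \ref{U-functional}(2), gives the diagonal estimate $|n!\,F_n(\phi,\dots,\phi)|\leq K_1\,r^{-n}\exp(K_2 r^2|\phi|_p^2)$; optimizing over $r>0$ produces a kernel bound of the form $|F_n(\phi,\dots,\phi)|\leq K_1(2eK_2/n)^{n/2}|\phi|_p^n/\sqrt{n!}$. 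The polarization identity transfers this to arbitrary arguments, showing that each $F_n$ extends by continuity to an element of $\mathscr S'_{\mathrm{sym}}(\mathbb R^n)$. Setting $\Phi:=\sum_n I_n(F_n)$ as a formal series, one verifies $S(\Phi)=F$ by matching Taylor coefficients of $\lambda\mapsto F(\lambda\phi)$ against the explicit formula from part (a).

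The main obstacle is the quantitative estimate
\[
\sum_{n=0}^{\infty}n!\,|(A^{-q})^{\otimes n}F_n|_{0,n}^2<\infty
\]
for some $q\geq p$, which is what actually places $\Phi$ in $(S_q)^*\subset(S)^*$. The polarization identity applied on a basis introduces a factor whose growth (of order $n^n$) and the $n!$ from the chaos norm together produce super-polynomial behavior that must be absorbed by the Hilbert--Schmidt norms of $A^{-q}$. Since $A$ has eigenvalues $\{2j\}_{j\geq 1}$ on the Hermite basis, $A^{-q}$ is Hilbert--Schmidt for $q>1/2$, and $|A^{-q}|_{HS}\to 0$ as $q\to\infty$. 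Choosing $q$ sufficiently large (depending on $K_2$ and $p$) makes $|(A^{-q})^{\otimes n}|_{HS}^2=|A^{-q}|_{HS}^{2n}$ decay geometrically fast enough to dominate the polarization and Cauchy factors and close the estimate, completing the identification and hence the bijection.
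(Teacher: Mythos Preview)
The paper does not prove this theorem; it is quoted from \cite{PS1991} and used as a black box (e.g.\ in Lemma~\ref{lemma:Phi} and in identifying $\partial_x u$ via its S-transform). There is therefore no proof in the paper to compare against.

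Your sketch is essentially the original Potthoff--Streit argument and is sound in outline. Two small remarks. First, in part (a) injectivity does not require the full density of $\{\mathcal E(\phi)\}$ in $(S)$: once you have $S(\Phi)(\phi)=\sum_n\langle F_n,\phi^{\otimes n}\rangle$, differentiating $\lambda\mapsto S(\Phi)(\lambda\phi)$ at $0$ recovers each $\langle F_n,\phi^{\otimes n}\rangle$, and polarization then determines $F_n$ uniquely. Second, you slightly overstate the difficulty in the last paragraph. After the Cauchy estimate and polarization, the multilinear bound on $F_n$ already carries a factor of order $(C/n)^{n/2}/n!$; expanding $(A^{-q})^{\otimes n}F_n$ in the Hermite basis and using $|e_j|_p=(2j)^p$ shows that $n!\,|(A^{-q})^{\otimes n}F_n|_{0,n}^2$ is summable as soon as $A^{-(q-p)}$ is Hilbert--Schmidt, i.e.\ for any fixed $q>p+\tfrac12$. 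You do not need $q\to\infty$. (Also, your displayed diagonal bound should read $1/n!$ rather than $1/\sqrt{n!}$, though this does not affect the argument.)
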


It is well-known that we cannot in general define the product between generalized functions, and this impossibility obviously extends also to generalized random variables. However, we are still able to define a particular type of \textit{renormalized} product called \textit{Wick product} and denoted by $\diamond$ in the following way.

\begin{definition}\label{def: Wick product}\cite[Definition 8.11]{K2018}
	Let $\Phi,\Psi\in (S)^*$ be two Hida distributions. We define the \textit{Wick product} between the two elements, denoted by $\Phi\diamond\Psi$, to be the unique Hida distribution satisfying
	\begin{align*}
		S(\Phi\diamond\Psi)(\phi)=S(\Phi)(\phi)\cdot S(\Psi)(\phi)
	\end{align*}
	for every $\phi\in \mathscr{S}(\mathbb R)$.
\end{definition}
\begin{remark}
The Hida space $(S)^*$ is an algebra with respect to the \textit{Wick product}.
\end{remark}
\begin{remark}
	We can also give an alternative characterization of the \textit{Wick product} between generalized random variables in terms of chaos decompositions (e.g.\cite[Corollary 4.22]{HKSP1990}), namely if 
	$\Phi, \Psi\in (S)^*$ are given by the formal series
	\begin{align*}
		\Phi=\sum_{n=0}^{\infty}I_n(F_n),\quad \Psi=\sum_{n=0}^{\infty}I_n(G_n),
	\end{align*}
	then the Wick product between $\Phi, \Psi\in (S)^*$ is defined by the following chaos decomposition:
	\begin{align*}
		\Phi\diamond\Psi=\sum_{n=0}^{\infty}I_n(H_n),
	\end{align*}
	where $H_n=\displaystyle\sum_{j=0}^{n}F_{n-j}\widehat{\otimes} G_{j}$, and $\widehat{\otimes}$ denotes the symmetric tensor product.
\end{remark}

One of the most striking properties of the \textit{Wick product} is its relation with stochastic integration of Skorokhod-It\^o type. In particular if $\{Y(x)\}_{x\in\mathbb R}$ is a Skorokhod integrable process, then we have that 
$$\int_{\mathbb R} Y(x)\delta W(x)=\int_{\mathbb{R}} Y(x)\diamond \dot W(x)dx$$
where the right hand side must be understood as a Pettis integral in $(S)^*$ (see for instance \cite{HOUZ2010} or \cite[Section 13.3]{K2018}) and the left hand side is a Skorokhod integral (see \cite{N2006}). This is the reason why in \eqref{eq:1dSHE} we introduce the \textit{Wick product} $\diamond$ and say that the corresponding stochastic integral should be interpreted in the Skorokhod-It\^o sense.

\subsection{Elements of Malliavin calculus}\label{sec:Malliavin}

For the purpose of this article, we will need a few definitions regarding  Malliavin calculus. The interested reader is referred to \cite{N2006} and \cite{S2005} for a compressive exposition of Malliavin calculus and to \cite{DNOP2009} for the particular case in which the underlying probability space is the white noise probability space.

Let $\mathcal S$ denote the class of smooth random variables $F$ having the form
\begin{align*}
F=f(W_{h_1},\dots,W_{h_n}),
\end{align*}
where $h_1,\dots,h_n\in L^2(\mathbb R)$, and $f$ belong to $C_p^{\infty}(\mathbb R^n)$ which stands for the set of all infinitely continuously differentiable functions such that each function together with all its derivatives has a polynomial growth. We will refer to $\mathcal S$ as the family of \textit{smooth Brownian functionals}.
\begin{definition}
The Malliavin derivative of a smooth Brownian functional $F$ is the $L^2(\mathbb R)$-valued random variable given by
$$DF=\sum_{i=1}^{n}\partial_if(W_{h_1},\dots,W_{h_n})h_i.$$
In the same way, we can define the $k$-th derivative of $F$ for any $k\in\mathbb N$, which will be a $L^2(\mathbb{R})^{\otimes k}$-valued random variable.
\end{definition}

\begin{definition}
Let $\mathcal S$ be the space of smooth Brownian functionals, and define the following semi-norm on $\mathcal{S}$ for $k\geq 1$ and $p\geq 1$,
\begin{align*}
|F|_{k,p}:=\left[\mathbb E\left(|F|^p\right)+\sum_{j=1}^{k}\mathbb E\left(|D^jF|_{0,n}^p\right)\right]^{1/p}.
\end{align*}
We will denote by $\mathbb D^{k,p}$ the completion of the family $\mathcal S$ with respect to the seminorm $|\bullet|_{k,p}$ and for any $F\in \mathbb D^{k,p}$, we will let
$$D^kF=\lim_{n\to\infty} D^kF_n\quad\mbox{in $L^p\left(\mu;L^2(\mathbb R)^{\otimes k}\right)$},$$
where $(F_n)_{n\in\mathbb N}\subset \mathcal S$ is any sequence converging to $F$ in $L^p(\mu)$.
\end{definition}
\begin{remark}
There exists an extension of the Malliavin derivative to an element of the Hida distribution space $(S)^*$ called the \textit{Hida-Malliavin derivative} (see for instance \cite{DNOP2009}).
\end{remark}

Furthermore, we will use the following notation
\begin{align*}
\mathbb D^{\infty,2}:=\bigcap_{k\geq 1}\mathbb D^{k,2},
\end{align*}
and  the following lemma (e.g. \cite{ST1987}).
\begin{lemma}\label{lemma: chaos decomposition kernel}
Let $F\in \mathbb D^{{\infty},2}$ have  the following chaos decomposition $$F=\sum_{n=0}^{\infty}I_n(f_n).$$
Then it holds that $f_n(\bullet)=\displaystyle\frac{1}{n!}\mathbb E[D_{\bullet}^nF]$.
\end{lemma}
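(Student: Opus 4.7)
The plan is to establish the classical Stroock formula by first computing the iterated Malliavin derivative on a single Wiener chaos and then assembling everything by linearity and a density/limiting argument. The starting point is the well-known derivative rule for symmetric kernels, $D_t I_m(f_m) = m\, I_{m-1}(f_m(\bullet,t))$, which I would either recall from the cited references or verify directly on smooth Brownian functionals of the form $I_m(h^{\otimes m})$ using $D_t W_h = h(t)$ and the identity $I_m(h^{\otimes m}) = |h|_0^m H_m(W_h/|h|_0)$, followed by extension via $\mathcal{S}$ being dense in $\mathbb{D}^{1,2}$.

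Iterating this rule $n$ times gives, for $m\geq n$,
\begin{align*}
D^n_{t_1,\dots,t_n} I_m(f_m) = \frac{m!}{(m-n)!}\, I_{m-n}\bigl(f_m(\bullet,t_1,\dots,t_n)\bigr),
\end{align*}
and $D^n_{t_1,\dots,t_n} I_m(f_m) = 0$ when $m<n$. Taking expectation annihilates every chaos of positive order, so only the $m=n$ term survives, yielding
\begin{align*}
\mathbb{E}\bigl[D^n_{t_1,\dots,t_n} I_n(f_n)\bigr] = n!\, f_n(t_1,\dots,t_n).
\end{align*}
Here the symmetry of $f_n$ is crucial so that no factor gets lost under the iterated contractions.

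To pass from a single chaos to a general $F = \sum_{m\ge 0} I_m(f_m) \in \mathbb{D}^{\infty,2}$, I would truncate to $F_N = \sum_{m=0}^N I_m(f_m)$ and use that $F\in \mathbb{D}^{n,2}$ implies $F_N\to F$ in $\mathbb{D}^{n,2}$, hence $D^n F_N \to D^n F$ in $L^2(\mu; L^2(\mathbb{R})^{\otimes n})$. Applying the previous identity to each $F_N$ and then taking $N\to\infty$, the continuity of $\mathbb{E}[\,\cdot\,]$ from $L^2(\mu; L^2(\mathbb{R})^{\otimes n})$ to $L^2(\mathbb{R}^{n})$ gives $\mathbb{E}[D^n F] = n!\, f_n$ in $L^2(\mathbb{R}^n)$, which is the claimed formula.

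The main obstacle is not the single-chaos calculation, which is routine once the derivative rule is in hand, but rather the justification of interchanging $D^n$, $\mathbb{E}$, and the infinite sum. This is handled cleanly by the assumption $F\in \mathbb{D}^{\infty,2}\subset \mathbb{D}^{n,2}$, which guarantees the $L^2$-convergence of the partial sums together with their $n$-th derivatives and lets the limit be taken componentwise in the kernels. A minor additional care is needed with symmetrization: the identity $D^n_{t_1,\dots,t_n}I_n(f_n)=n!\,f_n(t_1,\dots,t_n)$ uses the symmetry of $f_n$ in all $n$ variables, which is precisely the canonical choice in the chaos decomposition.
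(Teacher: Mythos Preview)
Your proof is correct and follows the standard argument for Stroock's formula. Note, however, that the paper does not actually prove this lemma: it is stated with a direct citation to Stroock~\cite{ST1987} and no proof is given. Your approach---iterating the rule $D_t I_m(f_m)=m\,I_{m-1}(f_m(\bullet,t))$, taking expectation so that only the $m=n$ term survives, and passing to the limit via the $\mathbb{D}^{n,2}$-convergence of the truncated sums---is exactly the classical route found in that reference and in standard Malliavin calculus texts, so there is nothing to compare against beyond the citation itself.
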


Finally we introduce the following space of test functions that was introduced for the first time in \cite{PM1995}.
\begin{definition}\label{def: space G}
For any $\lambda\in\mathbb R$, let  $\mathcal G_{\lambda}$ be the closure of $L^2(\mu)$ with respect to the norm $\left\|\Gamma(e^{\lambda}I)\;\bullet\right\|_{2},$
where $I$ stands for the identity operator.
More explicitly,
\begin{align*}
	\mathcal G_{\lambda}:=\bigg\{F=\sum_{n=0}^{\infty}I_n(f_n)\in L^2(\mu): \sum_{n=0}^{\infty}n!e^{2\lambda n}|f_n|_{0,n}^2<\infty\bigg\},
\end{align*}
and now we set
\begin{align*}
	\mathcal G:=\bigcap_{\lambda\in\mathbb R}\mathcal G_{\lambda}.
\end{align*}
In particular, it is not hard to see the following inclusions:
\begin{align*}
(S)\subset \mathcal G\subset L^2(\mu).
\end{align*}
\end{definition}

We note that if $F\in L^2(\mu)$ can be written as $\displaystyle\sum_{\alpha\in \mathcal{J}}F_{\alpha}\xi_{\alpha}$ (the chaos expansion in terms of Hermite polynomials), then one can show that $F\in \mathcal G_{\lambda}$  if
$$\sum_{n=0}^{\infty}e^{2\lambda n}\sum_{\alpha\in \mathcal{J}_n}|F_{\alpha}|^2<\infty.$$


\subsection{H\"older spaces and classical H\"older regularity results}

In this subsection, we first give a definition of H\"older spaces on $G\subseteq \mathbb{R}$. For $0<\gamma<1$, we let
\begin{align*}
 [f]_{\gamma}:= \sup_{z_1 \neq z_2 \in G} 
	\frac{\left|f(z_1)-f(z_2)\right|}{|z_1-z_2|^{\gamma}}.
\end{align*}

We say that $f$ is H\"older continuous with H\"older exponent $\gamma$
(or H\"older $\gamma$ continuous) on $G$ if
$$
\sup_{z\in G} |f(z)|
+ [f]_{\gamma}<\infty.$$
The collection of H\"older $\gamma$ continuous functions on $G$ is 
denoted by
$\mathcal{C}^{\gamma}\left(G\right)$
with the norm 
$$[[ f ]]_{\gamma}:=\sup_{z\in G}|f(z)|+ [f]_{\gamma}.$$

For $k\in \mathbb{N}$, we say that $f$ is a $k$ times continuously differentiable function on $G$
if the $m$-th derivative of $f$, denoted by $\partial^{m} f$, exists and is continuous for all $m\leq k$.
The collection of $k$ times continuously differentiable functions on $G$ such that $\partial^{k} f \in \mathcal{C}^{\gamma}(G)$ with $0<\gamma<1$, is denoted by $\mathcal{C}^{k+\gamma}(G)$
with the norm
$$[[ f ]]_{k+\gamma}:= 
\sum_{1\leq m\leq k} \sup_{z\in G}|\partial^{m} f(z)|
+[\partial^{k} f]_{\gamma}<\infty.$$

In a similar manner, we can define the H\"older spaces on $[0,T]\times \mathbb{R}$ for $T>0$ as follows. For $0<\gamma_1,\gamma_2<1$, we define
\begin{align*}
	[f]_{\gamma_1,\gamma_2}:=\displaystyle\sup_{(t,x) \neq (s,x) \in [0,T]\times \mathbb{R}} 
	\frac{\left|f(t,x)-f(s,x)\right|}{|t-s|^{\gamma_1}}+\displaystyle\sup_{(t,x) \neq (t,y) \in [0,T]\times \mathbb{R}} 
	\frac{\left|f(t,x)-f(t,y)\right|}{|x-y|^{\gamma_2}}.
\end{align*}
Then, $f$ is said to be H\"older $(\gamma_1,\gamma_2)$ continuous on $[0,T]\times \mathbb{R}$ if $\displaystyle\sup_{(t,x) \in [0,T]\times \mathbb{R}} |f(t,x)|
+ [f]_{\gamma_1,\gamma_2}<\infty$, and
the collection of H\"older $(\gamma_1,\gamma_2)$ continuous functions on $[0,T]\times\mathbb{R}$ is denoted by
$\mathcal{C}^{\gamma_1,\gamma_2}\left([0,T]\times \mathbb{R}\right)$
with the norm 
$$[[ f ]]_{\gamma_1,\gamma_2}:=\sup_{(t,x)\in [0,T]\times \mathbb{R}}|f(t,x)|+ [f]_{\gamma_1,\gamma_2}.$$

Let $k_1,k_2\in \mathbb{N}$ and $0<\gamma_1,\gamma_2<1$. The H\"older space, denoted by $\mathcal{C}^{k_1+\gamma_1,k_2+\gamma_2}([0,T]\times \mathbb{R})$, is defined by the collection of all functions on $([0,T]\times \mathbb{R})$ such that $f$ is $k_1$ times continuously differentiable in $t$ and $k_2$ times continuously differentiable in $x$ and the norm
$$[[ f ]]_{k_1+\gamma_1,k_2+\gamma_2}:= 
\sum_{0\leq i\leq k_1,\ 0\leq j\leq k_2} \sup_{(t,x)\in [0,T]\times \mathbb{R}}\left|\partial^{i}_t\partial^j_x f(t,x)\right|+\left[\partial^{k_1}_t\partial^{k_2}_x f\right]_{\gamma_1,\gamma_2}<\infty.$$
Here, $\partial_t:=\displaystyle\frac{\partial}{\partial t}$ $\left(\partial_x:=\displaystyle\frac{\partial}{\partial x}\right)$ represents the differentiation operator with respect to $t$ (resp. $x$).

Next, we state useful regularity results for the classical solutions of standard homogeneous and inhomogeneous heat equations on $[0,T]\times \mathbb{R}$.

Recall the Gaussian heat kernel $p(t,x)=\displaystyle\frac{1}{\sqrt{2\pi t}}e^{-\frac{x^2}{2t}}$. Let us define
$$(Pf)(t,x):=\displaystyle\int_{\mathbb{R}} p(t,x-y)f(y)dy,\quad (P\star f)(t,x):=\displaystyle\int_0^t \int_{\mathbb{R}} p(t-s,x-y)f(s,y)dyds.$$ 

\begin{lemma}\label{lem:holder}\cite[Chapter IV, Section 2]{LSU1968}
	Let $T>0$, $0<\gamma\notin \mathbb{N}$, and $n,m\in \mathbb{N}_0$. Then,  
	\begin{itemize}
		\item[(1)] If $2n+m=\rho$ and $f\in \mathcal{C}^{\gamma}(\mathbb{R})$,
		$$[[\partial_t^{n}\partial_x^{m} (Pf)]]_{\frac{\gamma-\rho}{2},\gamma-\rho}\leq C[[ f ]]_{\gamma}$$
		for some constant $C>0$.
		\item[(2)] Let $\lfloor \cdot \rfloor$ be the greatest integer function. If $\rho=\lfloor \gamma \rfloor$, $2n+m=\rho+2$, and $f\in \mathcal{C}^{\frac{\gamma}{2},\gamma}([0,T]\times \mathbb{R})$,
		$$[[\partial_t^{n}\partial_x^{m} (P\star f)]]_{\frac{\gamma-\rho}{2},\gamma-\rho}\leq C[[ f ]]_{\frac{\gamma}{2},\gamma},$$
		for some constant $C>0$.
	\end{itemize}	
\end{lemma}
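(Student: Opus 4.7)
The plan is to prove both estimates by direct analysis of the Gaussian heat kernel $p(t,x)$. The central simplification is that, because $p$ satisfies $\partial_t p=\frac{1}{2}\partial_{xx}^2 p$, one has the identity $\partial_t^n\partial_x^m(Pf)=2^{-n}\partial_x^{\rho}(Pf)$ with $\rho=2n+m$, and similarly inside the time integral for $P\star f$. Three ingredients will then do all the work: (i) the pointwise heat-kernel bound $|\partial_x^k p(t,x)|\leq C_k t^{-(k+1)/2}\,e^{-x^2/(4t)}$ (with an implicit polynomial prefactor in $|x|/\sqrt{t}$) obtained by differentiating the Gaussian explicitly; (ii) the vanishing-moment identity $\int_{\mathbb{R}} y^j\,\partial_x^k p(t,y)\,dy=0$ for $0\leq j<k$; and (iii) the Hölder hypothesis on $f$.

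For Part (1), I would rewrite
$$\partial_x^{\rho}(Pf)(t,x)=\int_{\mathbb{R}}\partial_x^{\rho} p(t,x-y)\bigl[f(y)-q_x(y)\bigr]\,dy,$$
where $q_x(y)$ is a polynomial in $y$ of degree strictly less than $\rho$ that is killed by the moment identity in (ii) (one can take the Taylor polynomial of $f$ at $x$ when $\gamma>\rho$). The bound $|f(y)-q_x(y)|\leq C[[f]]_{\gamma}|y-x|^{\gamma}$ together with (i) then yields $|\partial_x^{\rho}(Pf)(t,x)|\leq C\,t^{(\gamma-\rho)/2}[[f]]_{\gamma}$. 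Spatial Hölder regularity of order $\gamma-\rho$ at fixed $t$ follows by splitting the difference $\partial_x^{\rho}(Pf)(t,x_1)-\partial_x^{\rho}(Pf)(t,x_2)$ into the regimes $|x_1-x_2|\leq\sqrt{t}$ (mean-value argument using one extra spatial derivative and the analogous estimate applied to $\partial_x^{\rho+1}$) and $|x_1-x_2|>\sqrt{t}$ (bound each term separately). Temporal Hölder regularity of order $(\gamma-\rho)/2$ then follows from $\partial_t\partial_x^{\rho}(Pf)=\frac{1}{2}\partial_x^{\rho+2}(Pf)$, whose $L^\infty_x$ norm is bounded by $Cs^{(\gamma-\rho-2)/2}$ by the spatial estimate, integrated from $t_1$ to $t_2$.

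For Part (2), the same scheme is applied to the Duhamel integral, but the subtraction inside the spatial convolution is now $f(s,y)-f(s,x)$, which exploits the joint Hölder norm $[[f]]_{\gamma/2,\gamma}$. When $n\geq 1$, differentiating the upper limit $s=t$ also produces boundary terms that must be controlled using the temporal Hölder norm of $f$. The hard part will be the bookkeeping in Part (2): one must orchestrate the polynomial subtractions in space and the telescoping differences in time so that the final exponent $(\gamma-\rho)/2$ (respectively $\gamma-\rho$) comes out without any logarithmic loss, and so that the boundary contributions from the upper limit of the Duhamel integral combine correctly with the bulk terms. This delicate balance is exactly what makes the Schauder-type conclusion sharp and is the main reason the standard reference is a full chapter of \cite{LSU1968}.
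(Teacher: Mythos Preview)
The paper does not supply a proof of this lemma: it is stated with a citation to \cite[Chapter IV, Section 2]{LSU1968} and used as a black box. There is therefore no ``paper's own proof'' to compare against; your proposal is simply a sketch of how one would actually establish the Schauder-type estimates that the paper imports from the literature.

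Your outline is the standard direct approach and is essentially correct in spirit: reduce to pure spatial derivatives via the heat equation, subtract a Taylor polynomial that is annihilated by the vanishing moments of $\partial_x^\rho p$, and use the pointwise Gaussian bounds together with the near/far splitting $|x_1-x_2|\lessgtr\sqrt{t}$. One small imprecision worth flagging: the polynomial $q_x$ you subtract must have degree at most $\rho-1$ so that the moment identity kills it, but to obtain a remainder of order $|y-x|^{\gamma}$ you need the Taylor expansion to degree $\lfloor\gamma\rfloor$. When $\rho<\lfloor\gamma\rfloor$ these two requirements are in tension, and the clean way out is to first differentiate $f$ itself $\rho$ times (using $\partial_x^\rho(Pf)=P(\partial_x^\rho f)$ when $f$ is smooth enough, or equivalently to reduce to the case $\lfloor\gamma\rfloor=\rho$). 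This is a bookkeeping issue rather than a genuine gap, and it is precisely the sort of case analysis that makes the full argument in \cite{LSU1968} lengthy. For the purposes of the present paper, citing the reference as the authors do is entirely appropriate.
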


\section{Direct comparisons among $u^{\mathtt{FK}}$, $u^{\mathtt{MK}}$ and $u^{\mathtt{CS}}$}

Let $\left(\Omega,\mathcal F,\mathbb P^W\right)=\left(\mathscr{S}'(\mathbb R),\mathcal B,\mu\right)$ be our main probability space and introduce an auxiliary one $\left(\widetilde{\Omega},\widetilde{\mathcal{F}},\mathbb P^B\right)$ carrying a one-dimensional Brownian motion $\{B_t\}_{t\geq 0}$. Furthermore, let $\mathbb E^W$ ($\mathbb E^B$) denote the expectation with respect to $\mathbb P^W$ (resp. $\mathbb P^B$).

Moreover, we suppose that $u_0\in L^{\infty}(\mathbb{R})$ so that we have by uniqueness
$$u^{\mathtt{FK}}=u^{\mathtt{MW}}=u^{\mathtt{CS}}\in C([0,T]\times \mathbb{R};L^2(\Omega)).$$
The aim of this section is to give an alternative and more direct proof to show $u^{\mathtt{FK}}=u^{\mathtt{MW}}=u^{\mathtt{CS}}$. For the sake of simplicity and consistency, we will denote, for $(t,x)\in [0,T]\times \mathbb{R}$,
$$u_{(\mathbf{0})}(t,x):=\int_{\mathbb{R}}p(t,x-y)u_0(y)\;dy.$$


\subsection{$u^{\mathtt{FK}}$ and $u^{\mathtt{MW}}$}

We recall the multiple Wiener solution of \eqref{eq:1dSHE}:
\begin{align*}
u^{\mathtt{MW}}(t,x)&=\sum_{n=0}^{\infty}I_n\left(F_n^{\mathtt{MW}}(t,x)\right),
\end{align*}
where \begin{align*}
&F_0^{\mathtt{MW}}(t,x)=u_{(\mathbf{0})}(t,x);\\
&F_n^{\mathtt{MW}}(t,x;y_1,\dots,y_n)\\
&=\frac{1}{n!}\int_{[0,t]^n} p\left(t-r_{\rho(n)},x-y_{\rho(n)}\right)\times \cdots
\times  p\left(r_{\rho(2)}-r_{\rho(1)},y_{\rho(2)}-y_{\rho(1)}\right)u_{(\mathbf{0})}(r_{\rho(1)},x_{\rho(1)})\; d\mathbf{r},\ n\geq 1,
\end{align*}
and
$\rho$ denotes the permutation of $\{1,\dots,n\}$ such that $0<r_{\rho(1)}<\cdots<r_{\rho(n)}<t$.

Moreover, the Feynman-Kac solution is given by
\begin{align*}
u^{\mathtt{FK}}(t,x)&=\mathbb E^B\left[u_0(B_t^x)\exp\{\Psi_{t,x}\}\right],
\end{align*}
where
$\{B_t^x\}_{t\geq 0}:=\{B_t+x\}_{t\geq 0}$, for fixed $(t,x)\in [0,T]\times\mathbb R$, and 
\begin{align}\label{eq:Psi}
\Psi_{t,x}(\omega;\widetilde{\omega})=\int_{\mathbb R} L_y^x(t;\widetilde{\omega})dW(y;\omega)-\frac{1}{2}\int_{\mathbb R}|L_y^x(t;\widetilde{\omega})|^2 dy,\quad \mbox{$\mathbb{P}^W\otimes \mathbb{P}^B$-almost surely},
\end{align}
where $L_a^x(t)$ denotes the local time of $\{B_s^x\}_{s\geq 0}$ at level $a$ and time $t$. Note that the stochastic integral in \eqref{eq:Psi} is well-defined since the function $y\mapsto L_y^x(t;\widetilde{\omega})$ is square integrable for each fixed $(t,x,\widetilde{\omega})$. From now on, we will omit the explicit dependence on $(\omega,\widetilde{\omega})$ unless there is a risk of confusion.
Furthermore, we notice that by definition $\exp\{\Psi_{t,x}\}=\mathcal E(L^x(t))$, i.e. the stochastic exponential (e.g. \cite{HOUZ2010}) of the Brownian local time.

Using Lemma \ref{lemma: chaos decomposition kernel} and the fact that $D\mathcal E(L^x(t))=\mathcal E(L^x(t))L^x(t)$ and $\mathbb E^W\left[\mathcal E(L^x(t))\right]=1$, we can rewrite $u^{\mathtt{FK}}$ as
\begin{align*}
u^{\mathtt{FK}}(t,x)=\sum_{n=0}^{\infty}I_n\left(F_n^{\mathtt{FK}}(t,x)\right),
\end{align*}
where 
\begin{align*}
F_0^{\mathtt{FK}}(t,x)&=u_{(\mathbf{0})}(t,x);\\
F_n^{\mathtt{FK}}(t,x;y_1,\dots,y_n)&=\frac{1}{n!}\mathbb E^B\left[(L^x(t))^{\otimes n} (y_1,\dots,y_n)u_0(B_t^x)\right],\ n\geq 1.
\end{align*}
We will directly prove $u^{\mathtt{FK}}=u^{\mathtt{MW}}$ by showing $F_n^{\mathtt{FK}}=F_n^{\mathtt{MW}}$ for all $n\geq 0$.

For each $t\in [0,T]$ and $x\in \mathbb{R}$, it is clear that $F_0^{\mathtt{FK}}(t,x)=F_0^{\mathtt{MW}}(t,x)$.
We now let $n\geq 1$. Then, by Fubini lemma,
\begin{align*}
F_n^{\mathtt{FK}}(t,x;y_1,\dots,y_n)=\frac{1}{n!}\int_{[0,t]^n}\mathbb{E}^B\left[\delta_0(B_{s_1}^x-y_1)\cdots\delta_0(B_{s_n}^x-y_n)u_0(B_t^x)\right]\; d\mathbf{s},
\end{align*}
where $\delta_x(y):=\displaystyle\lim_{\varepsilon\to 0}(\pi\varepsilon)^{-1/2}e^{-|x-y|^2/\varepsilon}$, $x,y\in \mathbb{R}$ as the Dirac-delta function in the sense of distribution.
Let $\sigma$ be the permutation of $\{1,\dots,n\}$ such that $0<s_{\sigma(1)}<\cdots<s_{\sigma(n)}<t$. Then, we have
\begin{align*}
F_n^{\mathtt{FK}}&(t,x;y_1,\dots,y_n)=\frac{1}{n!}\int_{[0,t]^n}\mathbb{E}^B\left[\delta_0(B_{s_{\sigma(1)}}^x-y_{\sigma(1)})\cdots\delta_0(B_{s_{\sigma(n)}}^x-y_{\sigma(n)})u_0(B_t^x)\right]\; d\mathbf{s},\\
&=\frac{1}{n!}\int_{[0,t]^n}\mathbb{E}^B\left[\delta_0(B_{s_{\sigma(1)}}^x-y_{\sigma(1)})\cdots\delta_0(B_{s_{\sigma(n)}}^x-y_{\sigma(n)})\mathbb{E}^B\left[u_0(B_t^x)|\mathcal F_{s_{\sigma(n)}}\right]\right]\; d\mathbf{s},\\
&=\frac{1}{n!}\int_{[0,t]^n}\mathbb{E}^B\left[\delta_0(B_{s_{\sigma(1)}}^x-y_{\sigma(1)})\cdots\delta_0(B_{s_{\sigma(n)}}^x-y_{\sigma(n)})u_{(\mathbf{0})}\left(t-s_{\sigma(n)},B_{\sigma(n)}^x\right)\right]\; d\mathbf{s}.
\end{align*}
We know that for any $t\geq s$ and $f\in L^{\infty}(\mathbb{R})$, 
\begin{align*}
\mathbb E\left[\delta_0(B_t^x-y)f(B_t^x)|\mathcal F_s\right]&=\int_{\mathbb R}p\left(t-s,B_s^x-z\right)f(z)\delta_0(z-y)dz
= p\left(t-s,B_s^x-y\right)f(y).
\end{align*}
If we use the identity iteratively, we can obtain 
\begin{align}\label{eq:Fn-FK}
F_n^{\mathtt{FK}}(t,x;y_1,\dots,y_n)&=\frac{1}{n!}\int_{[0,t]^n} p\left(s_{\sigma(1)},y_{\sigma(1)}-x\right)\times \cdots\nonumber\\
& \times  p\left(s_{\sigma(n)}-s_{\sigma(n-1)},y_{\sigma(n)}-y_{\sigma(n-1)}\right)u_{(\mathbf{0})}(t-s_{\sigma(n)},x_{\sigma(n)})\; d\mathbf{s}.
\end{align}
Let $r_i=t-s_i$ for $i=1,\dots, n$ and $\rho(1)=\sigma(n)$, $\rho(2)=\sigma(n-1)$, $\dots$, $\rho(n)=\sigma(1)$. Then, it is clear that 
$0<r_{\rho(1)}<\cdots<r_{\rho(n)}<t$,
and we can rewrite \eqref{eq:Fn-FK} as
\begin{align*}
F_n^{\mathtt{FK}}(t,x;y_1,\dots,y_n)&=\frac{1}{n!}\int_{[0,t]^n} p\left(t-r_{\rho(n)},x-y_{\rho(n)}\right)\times \cdots\\
&\times  p\left(r_{\rho(2)}-r_{\rho(1)},y_{\rho(2)}-y_{\rho(1)}\right)u_{(\mathbf{0})}(r_{\rho(1)},x_{\rho(1)})\; d\mathbf{r}=F_n^{\mathtt{MW}}(t,x;y_1,\dots,y_n),
\end{align*}
and thus $u^{\mathtt{FK}}(t,x)=u^{\mathtt{MW}}(t,x)$.

\subsection{$u^{\mathtt{FK}}$ and $u^{\mathtt{CS}}$}\label{sec:MW-CS}

Let us now show $u^{\mathtt{FK}}=u^{\mathtt{CS}}$ directly. Recall
\begin{align*}
u^{\mathtt{CS}}(t,x)=\sum_{\alpha\in\mathcal J} u^{\mathtt{CS}}_{\alpha}(t,x)\xi_{\alpha},
\end{align*}
where $u^{\mathtt{CS}}_{(\mathbf{0})}(t,x)=u_{(\mathbf{0})}(t,x)$, and for $|\alpha|=n\geq 1$,
\begin{align*}
&u^{\mathtt{CS}}_{\alpha}(t,x)=\sqrt{n!}\int_{\mathbb R^n} F_n^{\mathtt{CS}}(t,x;y_1,\dots,y_n)\mathfrak{e}_{\alpha}(y_1,\dots,y_n)\; d\mathbf{y},\quad \alpha\in \mathcal J_n=\{\alpha\in\mathcal J:|\alpha|=n\},\\
&F_n^{\mathtt{CS}}(t,x;y_1,\dots,y_n)=\int_{\mathbb T^n_{[0,t]}}p(t-s_n,x-y_n)\cdots p(s_2-s_1,y_2-y_1)u_{(\mathbf{0})}(s_1,y_1)\; d\mathbf{s},
\end{align*}
$\mathbb T^n_{[0,t]}=\{0\leq s_1\leq\cdots\leq s_n\leq t\}$,
and $\{\mathfrak{e}_{\alpha},\alpha\in \mathcal{J}_n\}$ is an orthonormal basis of $L^2_{sym}(\mathbb{R}^n):=$ the symmetric part of $L^2(\mathbb{R}^n)$.
Specifically, 
\begin{align}\label{eq:ealpha}
\mathfrak{e}_{\alpha}=\frac{1}{\sqrt{n!\alpha!}}\sum_{\sigma\in \mathcal{P}_n} e_{k_{\sigma(1)}}(y_1)\cdots e_{k_{\sigma(n)}}(y_n),
\end{align}
where $k_{\alpha}=(k_1,\dots,k_n)$ be its characteristic vector for any $\alpha\in \mathcal J_n$ (e.g. \cite[Section 2]{KL2017}). 

We have $u^{\mathtt{CS}}_{(\mathbf{0})}(t,x)=u^{\mathtt{FK}}_{(\mathbf{0})}(t,x)$ for $(t,x)\in [0,T]\times \mathbb{R}$.
For $n\geq 1$, we have
\begin{align*}
\sum_{\alpha\in\mathcal J_n}u^{\mathtt{CS}}_{\alpha}(t,x)\mathfrak{e}_{\alpha}=\sqrt{n!}\sum_{\alpha\in\mathcal J_n} \left\langle F^{\mathtt{CS}}_n(t,x),\mathfrak{e}_{\alpha}\right\rangle_{L^2(\mathbb R^n)}\mathfrak{e}_{\alpha}.
\end{align*}
In fact, it is equal to the orthogonal projection of $F^{\mathtt{CS}}_n$ on $L^2_{sym}(\mathbb{R}^n)$, and thus
\begin{align*}
\sum_{\alpha\in\mathcal J_n}u^{\mathtt{CS}}_{\alpha}(t,x)\frac{\mathfrak{e}_{\alpha}}{\sqrt{n!}}=\operatorname{Sym}\left(F^{\mathtt{CS}}_n(t,x)\right)=:\widetilde{F^{\mathtt{CS}}_n(t,x)}.
\end{align*}
Note that 
\begin{align*}
\widetilde{F^{\mathtt{CS}}_n(t,x)}=\frac{1}{n!}\sum_{\sigma\in \mathcal P_{n}}\int_{\mathbb T^n_{[0,t]}}p(t-s_n,x-y_{\sigma(n)}) \cdots p(s_2-s_1,y_{\sigma(2)}-y_{\sigma(1)})u_{(\mathbf{0})}(s_1,y_{\sigma(1)})\; d\mathbf{s}.
\end{align*}

Now we take the $n$-fold Wiener integral $I_n(\bullet)$ on both sides to get
\begin{align}\label{eq:I_n}
\sum_{\alpha\in\mathcal J_n}u^{\mathtt{CS}}_{\alpha}(t,x)I_n\left(\frac{\mathfrak{e}_{\alpha}}{\sqrt{n!}}\right)=I_n\left(\widetilde{F^{\mathtt{CS}}_n(t,x)}\right).
\end{align}
There's a result due to It\^o (see for instance \cite[equation 2.2.29]{HOUZ2010}) stating that
\begin{align}\label{eq:Inxia}
I_n\left(\displaystyle\operatorname{Sym}\bigotimes_{j=1}^{\infty} e_{j}^{\otimes \alpha_j}\right)=\prod_{j=1}^{\infty}H_{\alpha_j}\left(I_1(e_{j})\right) \quad \mbox{and thus}\quad I_n\left(\frac{\mathfrak{e}_{\alpha}}{\sqrt{n!}}\right)=\xi_{\alpha}.
\end{align}
Plugging this into \eqref{eq:I_n}, we obtain
\begin{align*}
\sum_{\alpha\in\mathcal J_n}u^{\mathtt{CS}}_{\alpha}(t,x)\xi_{\alpha}=I_n\left(\widetilde{F^{\mathtt{CS}}_n(t,x)}\right),
\end{align*}
and thus 
\begin{align*}
u^{\mathtt{CS}}(t,x)=u^{\mathtt{CS}}_{(\mathbf{0})}(t,x)+\sum_{n=1}^{\infty}\sum_{\alpha\in\mathcal J_n}u^{\mathtt{CS}}_{\alpha}(t,x)\xi_{\alpha}= u^{\mathtt{CS}}_{(\mathbf{0})}(t,x)+ \sum_{n=1}^{\infty}I_n\left(\widetilde{F^{\mathtt{CS}}_n(t,x)}\right).
\end{align*}
To have that $u^{\mathtt{CS}}=u^{\mathtt{FK}}$, it only remains to show that $\widetilde{F^{\mathtt{CS}}_n(t,x)}=F^{\mathtt{FK}}_n(t,x)$ for $n\geq 1$. Indeed,
\begin{align*}
F^{\mathtt{FK}}_n(t,x)&=\frac{1}{n!}\int_{[0,t]^n}\mathbb{E}^B\left[\delta(B_{s_1}^x-y_1)\cdots\delta(B_{s_n}^x-y_n)u_0(B_t^x)\right]\; d\mathbf{s}\\
&=\frac{1}{n!}\frac{1}{n!}\sum_{\sigma\in \mathcal P_{n}}\int_{[0,t]^n}\mathbb{E}^B\left[\delta\left(B_{s_1}^x-y_{\sigma(1)}\right)\cdots\delta\left(B_{s_n}^x-y_{\sigma(n)}\right)u_0(B_t^x)\right]\; d\mathbf{s}\\
&=\frac{1}{n!}\sum_{\sigma\in \mathcal P_{n}}\int_{\mathbb T^n_{[0,t]}}\mathbb{E}^B\left[\delta_{y_{\sigma(1)}}\widehat{\otimes}\cdots\widehat{\otimes}\delta_{y_{\sigma(n)}}(B_{s_1}^x,\dots, B_{s_n}^x)u_0(B_t^x)\right]\; d\mathbf{s}\\
&=\frac{1}{n!}\sum_{\sigma\in \mathcal P_{n}}\int_{\mathbb T^n_{[0,t]}}\int_{\mathbb R^n}\left[\delta_{y_{\sigma(1)}}\widehat{\otimes}\cdots\widehat{\otimes}\delta_{y_{\sigma(n)}}\right](y_1,\dots,y_n)p(s_n-s_{n-1},y_n-y_{n-1})\times\cdots \\
&\quad\quad\quad\quad\quad\quad\quad\quad\quad \times p(s_2-s_1,y_2-y_1)p(s_1,y_1-x)u_{(\mathbf{0})}(t-s_n,y_n)\; d\mathbf{y}\; d\mathbf{s}\\
&=\frac{1}{n!}\sum_{\sigma\in \mathcal P_{n}}\int_{\mathbb T^n_{[0,t]}}p(s_n-s_{n-1},y_{\sigma(n)}-y_{\sigma(n-1)})\cdots p(s_1,y_{\sigma(1)}-x)u_{(\mathbf{0})}(t-s_n,y_{\sigma(n)})\; d\mathbf{s}\\	
&=\widetilde{F^{\mathtt{CS}}_n(t,x)}\quad \mbox{after rearranging the variables.}
\end{align*}


\section{Basic regularity of $u$}\label{sec:FKsol}

We again assume that $u_0\in L^{\infty}(\mathbb{R})$ so that $u=u^{\mathtt{FK}}=u^{\mathtt{MW}}=u^{\mathtt{CS}}$ and we will denote $\|\bullet\|_{\infty}:=\|\bullet\|_{L^{\infty}(\mathbb R)}$ In this section,
we will provide a few basic regularity of $u$ using the Feynman-Kac representation.

\begin{theorem}
For every $(t,x)\in [0,T]\times\mathbb R$, 
	$$u(t,x)\in \mathcal G.$$
\end{theorem}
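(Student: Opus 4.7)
Since $u(t,x)\in L^2(\mu)$ whenever $u_0\in L^\infty(\mathbb R)$ (Introduction, item (ii')), the bound $e^{2\lambda n}\leq 1$ makes $u(t,x)\in\mathcal G_\lambda$ automatic for every $\lambda\leq 0$. The theorem therefore reduces to proving, for each fixed $\lambda>0$,
\[
\sum_{n=0}^{\infty} n!\,e^{2\lambda n}\,\bigl|F_n^{\mathtt{FK}}(t,x)\bigr|_{0,n}^{2}<\infty,
\]
where
\[
F_n^{\mathtt{FK}}(t,x;y_1,\dots,y_n)=\frac{1}{n!}\mathbb E^B\!\bigl[L_{y_1}^{x}(t)\cdots L_{y_n}^{x}(t)\,u_0(B_t^{x})\bigr],\quad n\geq 1,
\]
are the Feynman--Kac chaos kernels derived in Section 3.1.

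The key step is a \emph{second-moment doubling}. I introduce $\tilde B^{x}$, an independent copy of $B^{x}$ defined on an auxiliary product probability space, with local time $\tilde L^{x}$. By the nonnegativity of Brownian local time and $|u_0|\leq\|u_0\|_\infty$,
\[
\bigl|F_n^{\mathtt{FK}}(t,x;\mathbf y)\bigr|\leq \frac{\|u_0\|_\infty}{n!}\,\mathbb E^B\!\bigl[L_{y_1}^{x}(t)\cdots L_{y_n}^{x}(t)\bigr].
\]
Squaring, integrating in $\mathbf y\in\mathbb R^n$, and applying Fubini to factor the spatial integrand variable by variable against the independent copy gives
\[
\bigl|F_n^{\mathtt{FK}}(t,x)\bigr|_{0,n}^{2}\leq \frac{\|u_0\|_\infty^{2}}{(n!)^{2}}\,\mathbb E^{B\otimes\tilde B}\!\bigl[\alpha_t^{n}\bigr],\qquad \alpha_t:=\int_{\mathbb R} L_y^{x}(t)\,\tilde L_y^{x}(t)\,dy,
\]
where $\alpha_t$ is the (non-renormalized) mutual intersection local time of $B^{x}$ and $\tilde B^{x}$ on $[0,t]^{2}$. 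Summing the geometric-type series converts the bound into a single exponential moment,
\[
\sum_{n=0}^{\infty} n!\,e^{2\lambda n}\,\bigl|F_n^{\mathtt{FK}}(t,x)\bigr|_{0,n}^{2}\leq \|u_0\|_\infty^{2}\,\mathbb E^{B\otimes\tilde B}\!\bigl[\exp(e^{2\lambda}\alpha_t)\bigr],
\]
so the theorem follows once I establish that \emph{every} exponential moment of $\alpha_t$ is finite.

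This exponential integrability is the main obstacle, and I would handle it by reducing to one-Brownian-motion self-intersection. Cauchy--Schwarz in $y$ followed by AM--GM yields $\alpha_t\leq\tfrac12(\gamma_t+\tilde\gamma_t)$ with $\gamma_t:=\int L_y^{x}(t)^{2}\,dy$, so by independence
\[
\mathbb E^{B\otimes\tilde B}\!\bigl[\exp(c\alpha_t)\bigr]\leq \mathbb E^{B}\!\bigl[\exp(c\gamma_t/2)\bigr]^{2}.
\]
The occupation-time formula gives $\int L_y^{x}(t)\,dy=t$, whence $\gamma_t\leq t\cdot\sup_{y\in\mathbb R}L_y^{x}(t)$, and the maximum of one-dimensional Brownian local time is classically known to have Gaussian-type upper tails (Barlow--Yor; or via the Ray--Knight theorem). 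Consequently $\mathbb E^{B}[\exp(c\gamma_t)]<\infty$ for every $c>0$, which closes the argument. A fully self-contained derivation would, alternatively, estimate $\mathbb E[\gamma_t^n]\leq C^{n}(n!)^{\beta}$ for some $\beta<1$ by integrating the explicit Brownian transition density over the ordered simplex $\{0<s_1<\cdots<s_{2n}<t\}$.
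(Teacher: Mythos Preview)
Your proof is correct and takes a genuinely different route from the paper's. The paper does not touch the chaos kernels $F_n^{\mathtt{FK}}$ directly; instead it computes the S-transform $S(u(t,x))(\phi)=\mathbb E^B\bigl[u_0(B_t^x)\exp\bigl(\int_0^t\phi(B_s^x)\,ds\bigr)\bigr]$ and invokes a characterization theorem for $\mathcal G$ due to Grothaus--M\"uller--Nonnenmacher [GMN2021, Corollary 5.1], which reduces membership in $\mathcal G$ to uniform finiteness of $\int_{\mathscr S_{\mathbb C}'}|S(u(t,x))(\lambda P_m\eta)|^2\,\nu(d\eta)$ over a complex Gaussian measure $\nu$. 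After Jensen and a one-dimensional Gaussian integral in $\eta$, that quantity collapses to $\|u_0\|_\infty^2\,\mathbb E^B\bigl[\exp\bigl(\lambda^2\int_{\mathbb R}|L_y^x(t)|^2\,dy\bigr)\bigr]$, and the paper cites Le Gall [GJF1994] for its finiteness. Your independent-copy doubling is more elementary: it avoids both the S-transform apparatus and the external characterization result, and it makes transparent why the self-intersection quantity $\gamma_t=\int L_y^x(t)^2\,dy$ governs the problem. Both arguments land on the same exponential moment of $\gamma_t$; the paper outsources it to [GJF1994], whereas you reduce it one step further to Barlow--Yor tails for $\sup_y L_y^x(t)$.
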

\begin{proof}
	We have for $\phi\in \mathscr{S}(\mathbb{R})$,
	\begin{align}\label{eq:Stu-v1}
	S(u(t,x))(\phi)&=\mathbb E^W\left[u(t,x)\mathcal E(\phi)\right]
	=\mathbb E^W\mathbb E^B\left[u_0(B_t^x)\mathcal E(L^x(t))\mathcal E(\phi)\right]\nonumber\\
	&=\mathbb E^{B}\left[u_0(B_t^x)\exp\left(\int_{\mathbb R} L^x_y(t)\phi(y)dy\right)\right],
	\end{align}
	where the last equality comes from Fubini Lemma and \cite[Theorem 5.13]{K2018}.
	
	Let $P_m:\mathscr{S}'(\mathbb R)\to\mathscr{S}(\mathbb R)$ be the orthogonal projection of $\mathscr{S}'(\mathbb R)$ on $\operatorname{span}\{e_1,\dots,e_m\}$, $m\geq 1$.
	Then for $\eta\in \mathscr S_{\mathbb C}'(\mathbb R):= \mathscr S'(\mathbb R)\oplus i\mathscr S'(\mathbb R)$ and $\lambda\in\mathbb R$, we have
	\begin{align*}
	S(u(t,x))(\lambda P_m\eta)=\mathbb{E}^{B}\left[u_0(B_t^x)\exp\bigg\{\int_{\mathbb{R}}\lambda P_m\eta(y)L_y^x(t)dy\bigg\}\right].
	\end{align*}
	Using the fact that for $\phi,\eta\in \mathscr S'_{\mathbb C}(\mathbb R)$ it holds  that $\langle \eta,P_m\phi \rangle=\langle \phi,P_m\eta\rangle$, we can write
	\begin{align*}
	S(u(t,x))(\lambda P_m\eta)=\mathbb{E}^{B}\left[u_0(B_t^x)\exp\bigg\{\lambda\int_{\mathbb{R}} \eta(y)(P_mL^x(t))(y)dy\bigg\}\right]
	\end{align*}
	and
	\begin{align*}
	|S(u(t,x))(\lambda P_m\eta)|^2=\left|\mathbb{E}^{B}\left[u_0(B_t^x)\exp\bigg\{\lambda\int_{\mathbb{R}} (\eta_1(y)+i\eta_2(y))(P_mL^x(t))(y)dy\bigg\}\right]\right|^2.
	\end{align*}
	By Jensen's inequality,	we have
	\begin{align*}
	|S(u(t,x))(\lambda P_m\eta)|^2\leq \|u_0\|_{\infty}^2\mathbb{E}^{B}\left[\left|\exp\bigg\{\lambda\int_{\mathbb{R}} (\eta_1(y)+i\eta_2(y))(P_mL^x(t))(y)dy\bigg\}\right|^2\right].
	\end{align*}
	Since $|z_1\cdot z_2|=|z_1|\cdot|z_2|$ for any $z_1,z_2\in \mathbb{C}$,
	\begin{align*}
	&|S(u(t,x))(\lambda P_m\eta)|^2\\ &\leq\|u_0\|_{\infty}^2\mathbb{E}^{B}\left[\left|\exp\bigg\{\lambda\int_{\mathbb{R}} \eta_1(y)(P_mL^x(t))(y)dy\bigg\}\right|^2\left|\exp\bigg\{i\lambda\int_{\mathbb{R}}\eta_2(y)(P_mL^x(t))(y)dy\bigg\}\right|^2\right]\nonumber\\
	&= \|u_0\|_{\infty}^2\mathbb{E}^B\left[\exp\bigg\{2\lambda\langle \eta_1,P_mL^x(t)\rangle_{L^2(\mathbb R)}\bigg\}\right].
	\end{align*}
	Thus,
	\begin{align*}
	\int_{\mathscr S_{\mathbb C}'(\mathbb R)}|S(u(t,x))(\lambda P_m\eta)|^2\nu(d\eta)\leq \|u_0\|_{\infty}^2\mathbb E^{B}\left[\int_{\mathscr S_{\mathbb C}'(\mathbb R)}\exp\bigg\{2\lambda\langle \eta_1,P_mL^x_{\bullet}(t)\rangle_{L^2(\mathbb R)}\bigg\}\nu(d\eta)\right],
	\end{align*}
	where the measure $\nu$ is given by the product measure $\mu_{\frac{1}{2}}\otimes\mu_{\frac{1}{2}}$, where $\mu_{\frac{1}{2}}$ is the measure on $(\Omega,\mathcal B)$ with the characteristic function given by:
	\begin{align*}
	\int_{\mathscr{S}'(\mathbb R)}e^{i\langle\omega,\varphi\rangle}d\mu(\omega)=e^{-\frac{1}{4}|\varphi|_0^2},\quad\quad\varphi\in\mathscr{S}(\mathbb R).
	\end{align*}
	It is clear that for any $\varphi \in\mathscr{S}(\mathbb R)$, $\mu_{\frac{1}{2}}\circ\left\langle\bullet,\frac{\varphi}{|\varphi|_0}\right\rangle^{-1}$ is a centered Gaussian measure with variance $1/2$ as in \cite[Lemma 2.1.2]{HOUZ2010}. Therefore,
	\begin{align*}
	\int_{\mathscr S_{\mathbb C}'(\mathbb R)}|S(u(t,x))(\lambda P_m\eta)|^2\nu(d\eta)&\leq \|u_0\|_{\infty}^2 \mathbb E^B\left[\frac{1}{\sqrt{\pi}}\int_{\mathbb R}e^{2\lambda y |P_mL^x(t)|_0}e^{-y^2}dy\right]\\
	&=\|u_0\|_{\infty}^2\mathbb E^B\left[e^{\lambda^2|P_mL^x(t)|^2_0}\right].
	\end{align*}
	Finally, we obtain (by \cite[page 178]{GJF1994})
	\begin{align*}
	\lim_{m\to\infty}\int_{\mathscr S_{\mathbb C}'(\mathbb R)}|S(u(t,x))(\lambda P_m\eta)|^2\nu(d\eta)\leq \|u_0\|_{\infty}^2 \mathbb E\left[e^{\lambda^2\int_{\mathbb R}|L_y^x(t)|^2dy}\right]<\infty,\; \forall \lambda\in\mathbb R,
	\end{align*}
	which implies by \cite[Corollary 5.1]{GMN2021}, $u(t,x)$ belongs to $\mathcal G$. 
\end{proof}

Next, we state the basic H\"older regularity of $u$ both in time and space.

\begin{theorem}
Let $0<\varepsilon<1/2$ be arbitrary and $C$ be a constant.
	\begin{itemize}
		\item[(i)] Assume that $u_0\equiv C$. Then, 
		$$u\in \mathcal{C}^{3/4-\varepsilon,1/2-\varepsilon}([0,T]\times \mathbb{R}).$$
		\item[(ii)] Assume that $u_0\not\equiv C$ and $u_0\in L^{\infty}(\mathbb{R})$ is (globally) Lipschitz continuous on $\mathbb{R}$.
			Then, 
		$$u\in \mathcal{C}^{1/2-\varepsilon,1/2-\varepsilon}([0,T]\times \mathbb{R}).$$
	\end{itemize}
\end{theorem}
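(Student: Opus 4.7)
The plan is to decompose $u=Pu_0+\mathcal I$ with $\mathcal I(t,x):=\int_0^t\!\int_{\mathbb R}p(t-s,x-y)u(s,y)\diamond\dot W(y)\,dy\,ds$, bound the two pieces separately in $L^p(\mu)$, and close by Kolmogorov's continuity criterion applied to the stochastic part.

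For the deterministic piece: in case (i) we have $Pu_0\equiv C$ and there is nothing to do. In case (ii), $u_0$ Lipschitz and bounded puts $u_0\in \mathcal C^{1-\varepsilon}(\mathbb R)$ for every small $\varepsilon>0$, so Lemma \ref{lem:holder}(1) with $n=m=0$ and $\gamma=1-\varepsilon$ yields $Pu_0\in \mathcal C^{(1-\varepsilon)/2,\,1-\varepsilon}([0,T]\times \mathbb R)\hookrightarrow \mathcal C^{1/2-\varepsilon,\,1/2-\varepsilon}([0,T]\times\mathbb R)$; this is the bottleneck that forces the time-exponent $1/2-\varepsilon$ in case (ii).

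For the stochastic piece, I would use the multiple Wiener chaos expansion $\mathcal I(t,x)=\sum_{n\geq 1}I_n(F_n^{\mathtt{MW}}(t,x;\cdot))$ (with the explicit kernels of Section 3) together with the Wiener isometry
\[\|\mathcal I(t,x)-\mathcal I(t',x')\|_{L^2(\mu)}^2 = \sum_{n\geq 1} n!\,\|F_n^{\mathtt{MW}}(t,x;\cdot)-F_n^{\mathtt{MW}}(t',x';\cdot)\|_{L^2(\mathbb R^n)}^2.\]
Starting from the iterated-heat-kernel representation of $F_n^{\mathtt{MW}}$ and the \emph{a priori} bound $\|F_0^{\mathtt{MW}}\|_\infty\leq \|u_0\|_\infty$ (together with $\sup_{s,z}\mathbb E[|u(s,z)|^2]\leq C\|u_0\|_\infty^2$ from item (ii') of the introduction), an induction on $n$ based on the semigroup identity $p(\tau_1,\cdot)*p(\tau_2,\cdot)=p(\tau_1+\tau_2,\cdot)$, Young's convolution inequality, and the elementary Plancherel identity $\int_{\mathbb R}[p(\tau,z-x)-p(\tau,z-y)]^2\,dz = \pi^{-1/2}\tau^{-1/2}(1-e^{-|x-y|^2/(4\tau)})$ together with its time-increment analogue produces summable chaos-level estimates $n!\,\|F_n^{\mathtt{MW}}(t,x)-F_n^{\mathtt{MW}}(t,y)\|^2 \leq c_n\,|x-y|$ and, in case (i), $n!\,\|F_n^{\mathtt{MW}}(t,x)-F_n^{\mathtt{MW}}(s,x)\|^2 \leq c_n\,|t-s|^{3/2}$ with $\sum_n c_n<\infty$. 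The improved $|t-s|^{3/2}$-rate in time (instead of the naive $|t-s|^{1/2}$ one would get for space-time white noise) is obtained by Fubini-switching to $\int_{\mathbb R}\bigl(\int_0^t p(t-s,x-y)u(s,y)\,ds\bigr)\diamond\dot W(y)\,dy$ and applying Cauchy-Schwarz in $s$, exploiting that $\dot W$ has no time-dependence. Summing in $n$ yields $\|\mathcal I(t,x)-\mathcal I(t,y)\|_{L^2(\mu)}\lesssim |x-y|^{1/2}$ and $\|\mathcal I(t,x)-\mathcal I(s,x)\|_{L^2(\mu)}\lesssim |t-s|^{3/4}$.

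To invoke Kolmogorov I need $L^p(\mu)$-estimates for $p$ arbitrarily large. Nelson's hypercontractivity on each chaos, $\|I_n(f)\|_{L^p(\mu)}\leq (p-1)^{n/2}\|I_n(f)\|_{L^2(\mu)}$, combined with the factorial decay of the $c_n$ (and using that $u(t,x)\in\mathcal G$ by the previous theorem, so the chaos components are exponentially controlled), ensures that the full $L^p$-series still converges and I obtain $\|\mathcal I(t,x)-\mathcal I(s,y)\|_{L^p(\mu)}\lesssim_p |x-y|^{1/2}+|t-s|^{3/4}$ for every $p\geq 2$. Kolmogorov's continuity theorem applied jointly in $(t,x)$ then provides a modification of $\mathcal I$ in $\mathcal C^{3/4-\varepsilon,\,1/2-\varepsilon}([0,T]\times \mathbb R)$, and combining this with the regularity of $Pu_0$ yields (i) and (ii). The main technical obstacle is the chaos-level induction: tracking the $n$-fold heat-kernel convolutions inside $F_n^{\mathtt{MW}}$ so that the combinatorial factors from symmetrization, together with the factorials produced by repeated use of the semigroup property, combine into a summable $\sum_n c_n$ with the required geometric decay in $n$.
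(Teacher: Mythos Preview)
Your strategy is sound but takes a genuinely different route from the paper's own proof. The paper works directly with the Feynman--Kac representation $u(t,x)=\mathbb E^B[u_0(B_t^x)\exp\{\Psi_{t,x}\}]$ and never unpacks the chaos expansion: after the elementary bound $|e^a-e^b|\leq(e^a+e^b)|a-b|$ and Cauchy--Schwarz/Minkowski, the problem reduces to second moments of increments of $\Psi$, namely $A_1=\vvvert\Psi_{t,x}-\Psi_{t,y}\vvvert_2^2$ and $A_2=\vvvert\Psi_{t,y}-\Psi_{s,y}\vvvert_2^2$. These are then computed explicitly via the occupation time / local time formalism: $A_1=\mathbb E^B\!\int_{\mathbb R}|L_{a-x}(t)-L_{a-y}(t)|^2\,da=4t|x-y|+O(|x-y|^2)$ and $A_2$ is rewritten as $\mathbb E^B\!\int_s^t\!\int_s^t\delta_0(B_r-B_z)\,dr\,dz+\tfrac14\mathbb E^B(\cdots)^2\leq C|t-s|^{3/2}$. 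In case (ii) an extra term $\bar A_1$ coming from $u_0(x+B_t)-u_0(y+B_s)$ is handled by the Lipschitz hypothesis and gives the $|t-s|^{1/2}$ bottleneck. No chaos-level induction is needed at all.

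Your chaos-expansion route (kernel increments of $F_n^{\mathtt{MW}}$ plus hypercontractivity) is the approach the paper in fact uses later, in Section~5, for the regularity of $\partial_x u$; so it is certainly viable here as well. What you would gain is a method that does not rely on the Feynman--Kac formula or local-time identities, and that interfaces directly with the machinery of Theorems~\ref{thm:uxL2}--\ref{thm:uxspace}. What the paper's approach gains is that the entire argument collapses to two closed-form local-time computations, bypassing the ``main technical obstacle'' you flag, namely the summable $n$-level induction on heat-kernel convolutions. That induction is doable (the paper does the harder version for $\partial_x u$), but as written your proposal only asserts the needed bounds $n!\,\|F_n^{\mathtt{MW}}(t,x)-F_n^{\mathtt{MW}}(s,y)\|^2\leq c_n(|x-y|+|t-s|^{3/2})$ with $\sum_n c_n<\infty$ rather than proving them; a referee would want to see at least the mechanism producing the factor $n^{-n/2}$ (or similar) that kills the $(p-1)^{n/2}$ from hypercontractivity.
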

\begin{proof}
	Let $\vvvert\bullet\vvvert_p:=\left(\mathbb{E}^W\mathbb{E}^B|\bullet|^p\right)^{1/p}$ be the norm on the Banach space $L^p(\mathbb{P}^W\otimes \mathbb{P}^B)$ for $p\geq 1$. From \cite{S2021}, we have 
	\begin{align}\label{eq:expPsibdd}
	\displaystyle\sup_{(t,x)\in [0,T]\times \mathbb{R}} \vvvert \mbox{exp}\left\{\Psi_{t,x}\right\}\vvvert_p<\infty.
	\end{align} 
	Also, \eqref{eq:Psi} conditional on $B$, becomes
	\begin{align}\label{eq:Psi-Gaussian}
	\Psi_{t,x}\sim N\left(-\frac{1}{2}\int_{\mathbb R}|L_a(t)|^2da,\int_{\mathbb R}|L_a(t)|^2da\right),
	\end{align} 
	where $N\left(\mu,\sigma^2\right)$ denotes the Gaussian random variable with mean $\mu$ and variance $\sigma^2$.
	
	(i) Let $u_0$ be a constant. Then,
		using the fact $|e^x-e^y|\leq (e^x+e^y)|x-y|$ for all $x,y\in \mathbb{R}$, Cauchy-Schwarz inequality, Minkowski inequality, \eqref{eq:expPsibdd}, and \eqref{eq:Psi-Gaussian}, we can obtain for $p\geq 2$.
	\begin{align*}
	\mathbb{E}^W\left[|u(t,x)-u(s,y)|^p\right]&\leq c_p\bigg\{\mathbb E^B\mathbb E^W\left[|\Psi_{t,x}-\Psi_{s,y}|^2\right]^{1/2}\bigg\}^p
	=c_p\vvvert \Psi_{t,x}-\Psi_{s,y}\vvvert_2^p,
	\end{align*}
	for some $c_p>0$. Also, the triangular inequality implies
	\begin{align*}
	\mathbb E^W\left[|u(t,x)-u(s,y)|^p\right]\leq c_p\bigg\{\vvvert \Psi_{t,x}-\Psi_{t,y}\vvvert_2+\vvvert \Psi_{t,y}-\Psi_{s,y}\vvvert_2\bigg\}^p=:c_p\left(A_1^{1/2}+A_2^{1/2}\right)^p.
	\end{align*}
	Let us now work with 
	\begin{align*}
	A_1=\vvvert \Psi_{t,x}-\Psi_{t,y}\vvvert_2^2=\mathbb E^B\mathbb E^W\left[\Psi_{t,x}^2-2\Psi_{t,x}\Psi_{t,y}+\Psi_{t,y}^2\right].
	\end{align*}
	By \eqref{eq:Psi-Gaussian}, we have
	\begin{align*}
	A_1=\mathbb E^B\left[2\int_{\mathbb R}|L_a(t)|^2da+\frac{1}{2}\left(\int_{\mathbb R}|L_a(t)|^2da\right)^2-2\mathbb E^W[\Psi_{t,x}\Psi_{t,y}]\right].
	\end{align*}
	Recall the Dirac-delta function $\delta_x(y)=\displaystyle\lim_{\varepsilon\to 0}(\pi\varepsilon)^{-1/2}e^{-|x-y|^2/\varepsilon}$, $x,y\in \mathbb{R}$.
	Since
	\begin{align*}
	\mathbb{E}^B\mathbb E^W\left[\Psi_{t,x}\Psi_{t,y}\right]=\mathbb{E}^B\left[\int_0^t\int_0^t\delta_0(B_u-B_r-(x-y))dudr+\frac{1}{4}\left(\int_{\mathbb{R}}|L_a(t)|^2da\right)^2\right],
	\end{align*}
	we get
	\begin{align*}
	A_1=\mathbb E^B\left[2\int_{\mathbb R}|L_a(t)|^2da-2\int_0^t\int_0^t\delta_0(B_u-B_r-(x-y))dudr\right].
	\end{align*}
	The next step is done rigorously (See \cite{H2016} for instance) by the translation invariant property of the Lebesgue measure:
	\begin{align*}
	A_1=\mathbb E^B\left[\int_{\mathbb R}|L_{a-x}(t)|^2da-2\int_0^t\int_0^t\delta_0(B_u-B_r-(x-y))dudr+\int_{\mathbb R}|L_{a-y}(t)|^2da\right],
	\end{align*}
	and this yields, by \cite[Proposition 9.2]{H2016},
	\begin{align*}
	A_1=\mathbb{E}^B\left[\int_{\mathbb R} \left|L_{a-x}(t)-L_{a-y}(t)\right|^2da\right]=4t|x-y|+O(|x-y|^2),
	\end{align*}
	which implies $u(t,\bullet)$ is almost H\"older $1/2$ continuous uniformly for all $t\in [0,T]$. 
	
	On the other hand, let us compute, for $0\leq s\leq t\leq T$,
	\begin{align*}
	A_2&=\mathbb E^B\mathbb E^W\left[\Psi_{t,y}^2-2\Psi_{t,y}\Psi_{s,y}+\Psi_{s,y}^2\right]\\
	&=\mathbb{E}^B\bigg[\int_{\mathbb{R}}|L_{a}(t)|^2da+\frac{1}{4}\left(\int_{\mathbb{R}}|L_{a}(t)|^2da\right)^2+\int_{\mathbb{R}}|L_{a}(s)|^2da+\frac{1}{4}\left(\int_{\mathbb{R}}|L_{a}(s)|^2da\right)^2\\
	&\quad-2\mathbb{E}^W\left[\Psi_{t,y}\Psi_{s,y}\right]\bigg].
	\end{align*}
	Since
	\begin{align*}
	\mathbb{E}^B\mathbb{E}^W\left[\Psi_{t,y}\Psi_{s,y}\right]=\mathbb{E}^B\left[\int_0^t\int_0^s\delta_0(B_u-B_r)dudr+\frac{1}{4}\left(\int_{\mathbb{R}}|L_{a}(t)|^2da\right)\left(\int_{\mathbb{R}}|L_{a}(s)|^2da\right)\right],
	\end{align*}
	we have
	\begin{align*}
	A_2&=\mathbb{E}^B\left[\int_{\mathbb{R}}|L_{a}(t)|^2da-2\int_0^t\int_0^s\delta_0(B_r-B_z)drdz+\int_{\mathbb{R}}|L_{a}(s)|^2da\right]\\
	&\quad +\frac{1}{4}\mathbb{E}^B\left[\left(\int_{\mathbb{R}}|L_{a}(t)|^2da\right)^2-2\left(\int_{\mathbb{R}}|L_{a}(t)|^2da\right)\left(\int_{\mathbb{R}}|L_{a}(s)|^2da\right)+\left(\int_{\mathbb{R}}|L_{a}(s)|^2da\right)^2\right]\\
	&=\mathbb{E}^B\left[\int_{\mathbb{R}}|L_{a}(t)|^2da-2\int_0^t\int_0^s\delta_0(B_r-B_z)drdz+\int_{\mathbb{R}}|L_{a}(s)|^2da\right]\\
	&\quad +\frac{1}{4}\mathbb{E}^B\left(\int_{\mathbb{R}}\left(|L_a(t)|^2-|L_a(s)|^2\right)da\right)^2.
	\end{align*}
	We note that $$\mathbb{E}^B\left[\int_{\mathbb{R}}|L_a(t)|^2da\right]=\mathbb{E}^B\left[\int_0^t\int_0^t \delta_0(B_r-B_z)drdz\right],$$
	which implies
	\begin{align*}
	A_2&=\mathbb{E}^B\left[\int_0^t\int_0^t \delta_0(B_r-B_z)drdz-2\int_0^t\int_0^s \delta_0(B_r-B_z)drdz+\int_0^s\int_0^s \delta_0(B_r-B_z)drdz\right]\\
	&\quad +\frac{1}{4}\mathbb{E}^B\left(\int_0^t\int_0^t \delta_0(B_r-B_z)drdz-\int_0^s\int_0^s \delta_0(B_r-B_z)drdz\right)^2\\
	&=\mathbb{E}^B\left[\int_s^t\int_s^t \delta_0(B_r-B_z)drdz\right]
	+\frac{1}{4}\mathbb{E}^B\left(\int_0^t\int_0^t \delta_0(B_r-B_z)drdz-\int_0^s\int_0^s \delta_0(B_r-B_z)drdz\right)^2\\
	&=:A_3+A_4.
	\end{align*}
	We can easily compute $A_3$:
	\begin{align*}
	A_3&=\int_s^t\int_s^t (2\pi|r-z|)^{-1/2}drdz=C(t-s)^{3/2}\quad \mbox{for some $C>0$ independent of $x$}.
	\end{align*}
	For $A_4$, we have
	\begin{align*}
	4A_4&=\int_s^t\int_s^t\int_s^t\int_s^t \mathbb{E}^B\left(\delta_0(B_z-B_r) \delta_0(B_q-B_p)\right)dpdqdrdz\\
	&\stackrel{\text{symmetry}}{=}4!\int_s^t\int_s^z\int_s^r\int_s^q \mathbb{E}^B\left[\delta_0(B_z-B_r)\right]\mathbb E^B\left[ \delta_0(B_q-B_p)\right]dpdqdrdz\\
	&=4!\int_s^t\int_s^z\int_s^r\int_s^q \frac{1}{\sqrt{2\pi(z-r)}} \frac{1}{\sqrt{2\pi(q-p)}}dpdqdrdz\\
	&\leq 4!\int_s^t\int_s^z\frac{1}{\sqrt{2\pi(z-r)}}\left(\int_s^{t}\int_s^q  \frac{1}{\sqrt{2\pi(q-p)}}\right)dpdqdrdz\\
	&=C (t-s)^{3}\quad \mbox{for some $C>0$ independent of $x$}.
	\end{align*}
	Combining all together, we obtain
	$$A_2\leq C|t-s|^{3/2},$$
	which implies $u(\bullet,x)$ is almost H\"older $3/4$ continuous uniformly for all $x\in \mathbb{R}$. 

(ii) If $u_0$ is not a constant function on $\mathbb{R}$, then we have
	\begin{align*}
		\mathbb E^W\left[|u(t,x)-u(s,y)|^p\right]&=\mathbb{E}^W\left|\mathbb{E}^B\left(u_0(x+B_t)\mbox{exp}\left(\Psi_{t,x}\right)\right)-\mathbb{E}^B\left(u_0\left(y+B_s\right)\mbox{exp}(\Psi_{s,y})\right)\right|^p\\
		&=\mathbb{E}^W\Big|\mathbb{E}^B\left(\left(u_0(x+B_t)-u_0(y+B_s)\right)\mbox{exp}(\Psi_{t,x})\right)\\
		&\qquad\qquad +\mathbb{E}^B\left(u_0(y+B_s)\left(\mbox{exp}(\Psi_{t,x})-\mbox{exp}(\Psi_{s,y})\right)\right)\Big|^p.
	\end{align*}
Since $|f+g|^p\leq 2^{p-1}\left(|f|^p+|g|^p\right)$ for $p\geq 1$,
	\begin{align*}
	\mathbb E^W\left[|u(t,x)-u(s,y)|^p\right]&\leq 2^{p-1} \bigg(\mathbb{E}^W\left|\mathbb{E}^B\left(\left(u_0\left(x+B_t\right)-u_0\left(y+B_s\right)\right)\mbox{exp}(\Psi_{t,x})\right)\right|^p\\
	&\qquad\qquad +\mathbb{E}^W\left|\mathbb{E}^B\left(u_0\left(y+B_s\right)\left(\mbox{exp}(\Psi_{t,x})-\mbox{exp}(\Psi_{s,y})\right)\right)\right|^p
	\bigg)\\
	&=:2^{p-1}\left(\bar{A}_1+\bar{A}_2\right).
	\end{align*}
	For $\bar{A}_1$, by Cauchy-Schwarz inequality,
	\begin{align*}
	\bar{A}_1&\leq  \left(\mathbb{E}^B\left(u_0(x+B_t)-u_0(y+B_s)\right)^2\right)^{p/2}\cdot\mathbb{E}^W\left(\mathbb{E}^B\left(\mbox{exp}\left(2\Psi_{t,x}\right)\right)\right)^{p/2}.
	\end{align*}
	Since $u_0$ is Lipschitz continuous on $\mathbb{R}$, we have 
	\begin{align*}
	\bar{A}_1&\leq 
	\left(|t-s|+(x-y)^2\right)^{p/2}\cdot \mathbb{E}^W\left(\mathbb{E}^B\left(\mbox{exp}\left(2\Psi_{t,x}\right)\right)\right)^{p/2}\\
	&\leq
	\left(|t-s|^{1/2}+|x-y|\right)^{p}\cdot\mathbb{E}^W\left(\mathbb{E}^B\left(\mbox{exp}\left(2\Psi_{t,x}\right)\right)\right)^{p/2}.
	\end{align*}
	By Minkowski inequality, H\"older inequality for $p\geq 2$, and \eqref{eq:expPsibdd}, we also have
	\begin{align*}
	\bar{A}_1\leq \left(|t-s|^{1/2}+|x-y|\right)^{p}\cdot\mathbb{E}^W\mathbb{E}^B\left(\mbox{exp}\left(p\Psi_{t,x}\right)\right)<\infty.
	\end{align*}
	For $\bar{A}_2$, since $u_0\in L^{\infty}(\mathbb{R})$, we can apply the same argument in (i). As a result, we can say that $u$ is H\"older continuous almost $1/2$ both in time and space.
\end{proof}

As we argued in the introduction, one expects that we can still improve the spatial regularity of $u$. We will derive our desired result in Section \ref{sec:uFKx}.


\section{The spatial derivative of $u$}\label{sec:uFKx}

As we anticipated in the introduction, we expect that $u(t,\bullet)\in C^{3/2-\varepsilon}(\mathbb{R})$ for any small $\varepsilon>0$. To verify this assertion, we first compute the spatial derivative of $u$ using the Feynman-Kac representation and then find its chaos expansion to see if it is well-defined in $\mathcal{G}$ and to get the optimal spatial regularity of $u$.

Let us start with a useful Lemma. The following result will serve as a key idea for finding $\partial_x u(t,x)$.  
	\begin{lemma}\label{lemma:Phi}
	For fixed $(t,x)$ the map $\widetilde{\omega}\ni\widetilde{\Omega}\mapsto \widetilde{\Phi}_{t,x}(\widetilde{\omega})\in (S)^*$ given by
	\begin{align*}
	\widetilde{\Phi}_{t,x}(\widetilde{\omega})=	\mathcal E(L^x(t;\widetilde{\omega}))\diamond \left[u_0'(B_t^x(\widetilde{\omega}))+u_0(B_t^x)I_1\left(\partial_xL^x(t;\widetilde{\omega})\right)\right],
	\end{align*}
	is Bochner integrable in $(S)^*$. Here, $\partial_xL^x(t)\in \mathscr{S}'(\mathbb R)$ denotes the pathwise distributional derivative of the local time of $\{B_t^x\}_{t\geq 0}$.
\end{lemma}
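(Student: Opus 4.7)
The plan is to invoke Theorem \ref{thm:StransformBijection} pointwise in $\widetilde\omega$ together with a standard S-transform-based Bochner integrability criterion for Hida distributions (e.g.\ \cite[Ch.~13]{K2018}). Concretely, I will (i) compute $S(\widetilde\Phi_{t,x}(\widetilde\omega))$ in closed form and verify that it is a U-functional in the sense of Definition \ref{U-functional}; (ii) extract a pathwise bound on $\|\widetilde\Phi_{t,x}(\widetilde\omega)\|_{-q}$ for some $q>0$; (iii) check that this bound is $\mathbb{P}^B$-integrable.

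\textit{Step (i).} Using Definition \ref{def: Wick product} together with $S(\mathcal E(h))(\phi)=e^{\langle h,\phi\rangle}$ for $h\in L^2(\mathbb R)$, the extension $S(I_1(T))(\phi)=\langle T,\phi\rangle$ to $T\in\mathscr S'(\mathbb R)$, and the fact that $u_0(B_t^x)$ and $u_0'(B_t^x)$ are deterministic in $\omega$, one obtains for $\phi\in\mathscr S(\mathbb R)$:
\begin{align*}
S\bigl(\widetilde\Phi_{t,x}(\widetilde\omega)\bigr)(\phi)=e^{\langle L^x(t;\widetilde\omega),\phi\rangle}\Bigl[u_0'(B_t^x(\widetilde\omega))+u_0(B_t^x(\widetilde\omega))\,\langle\partial_xL^x(t;\widetilde\omega),\phi\rangle\Bigr].
\end{align*}
The map $\lambda\mapsto S(\widetilde\Phi_{t,x})(\lambda\phi+\varphi)$ is an affine function of $\lambda$ times an exponential in $\lambda$, hence extends to an entire function on $\mathbb C$, so part (1) of Definition \ref{U-functional} holds.

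\textit{Step (ii).} Cauchy-Schwarz gives $|\langle L^x(t),\varphi\rangle|\leq|L^x(t)|_0|\varphi|_0$ and, via integration by parts, $|\langle\partial_xL^x(t),\varphi\rangle|=|\langle L^x(t),\varphi'\rangle|\leq|L^x(t)|_0|\varphi'|_0$. By the remark after the definition of $\Gamma(A)$, the $p$-seminorms are equivalent to the standard Schwartz seminorms, so there exist $p\geq 1$ and $C>0$ with $|\varphi|_0+|\varphi'|_0\leq C|\varphi|_p$. Using $|ab|\leq\tfrac{1}{2\epsilon}a^2+\tfrac{\epsilon}{2}b^2$ for a suitable $\epsilon>0$ and $|a|\leq e^{a^2/2}$, one arrives at a U-functional bound of the form
\begin{align*}
\bigl|S\bigl(\widetilde\Phi_{t,x}(\widetilde\omega)\bigr)(\varphi)\bigr|\leq K_1(\widetilde\omega)\exp\bigl(K_2|\varphi|_p^2\bigr),\quad K_1(\widetilde\omega):=C'\bigl(\|u_0\|_\infty+\|u_0'\|_\infty\bigr)\exp\bigl(\kappa|L^x(t;\widetilde\omega)|_0^2\bigr),
\end{align*}
with $\kappa>0$ as small as we wish at the cost of a larger $K_2$. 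By Theorem \ref{thm:StransformBijection}, $\widetilde\Phi_{t,x}(\widetilde\omega)\in(S_p)^*$ for $\mathbb P^B$-a.e.\ $\widetilde\omega$, and a quantitative form of the same theorem (cf.\ \cite[Thm.~13.5]{K2018}) promotes this into $\|\widetilde\Phi_{t,x}(\widetilde\omega)\|_{-q}\leq C''K_1(\widetilde\omega)$ for any $q>p$ sufficiently large.

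\textit{Step (iii).} Measurability of $\widetilde\omega\mapsto\widetilde\Phi_{t,x}(\widetilde\omega)\in(S_q)^*$ follows from the explicit S-transform formula together with the continuity of $\widetilde\omega\mapsto L^x(t;\widetilde\omega)\in L^2(\mathbb R)$, so the Bochner integrability question reduces to showing $\mathbb E^B[K_1]<\infty$, which amounts to an exponential moment bound $\mathbb E^B[e^{\kappa|L^x(t)|_0^2}]<\infty$ for $\kappa>0$ small enough. This is classical (cf.\ \cite[Prop.~9.2]{H2016}) and is already implicit in the proof of \eqref{eq:expPsibdd} in Section \ref{sec:FKsol}. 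The main obstacle is the pathwise interpretation of the Wick factor $I_1(\partial_xL^x(t))$: since $\partial_xL^x(t)$ lies only in $\mathscr S'(\mathbb R)$, this term genuinely belongs to $(S)^*\setminus L^2(\mu)$, and the loss of one derivative from integration by parts is precisely what forces the use of $p$-seminorms with $p\geq 1$ instead of the plain $L^2(\mathbb R)$ norm.
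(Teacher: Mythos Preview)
Your proof is correct and invokes the same S-transform integrability criterion as the paper, but you reach the U-functional bound by a different route. You control $\langle L^x(t),\varphi\rangle$ and $\langle \partial_xL^x(t),\varphi\rangle$ via Cauchy--Schwarz in $L^2(\mathbb R)$, which produces a \emph{random} constant $K_1(\widetilde\omega)\asymp\exp(\kappa|L^x(t;\widetilde\omega)|_0^2)$ and then forces you to appeal to the exponential-moment estimate $\mathbb E^B[e^{\kappa\int_{\mathbb R}|L_y(t)|^2dy}]<\infty$ in Step~(iii). The paper instead applies the occupation time formula to rewrite $\langle L^x(t),\phi\rangle=\int_0^t\phi(B_s^x)\,ds$ and $\langle\partial_xL^x(t),\phi\rangle=\int_0^t\phi'(B_s^x)\,ds$, which immediately yields the \emph{deterministic} bound
\[
\bigl|S\bigl(\widetilde\Phi_{t,x}(\widetilde\omega)\bigr)(\phi)\bigr|\le\bigl(\|u_0'\|_\infty+t\|u_0\|_\infty\|\phi'\|_\infty\bigr)e^{t\|\phi\|_\infty}\le K_1e^{K_2|\phi|_p^2},
\]
with $K_1,K_2$ independent of $\widetilde\omega$. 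This makes Step~(iii) trivial and avoids both the exponential-moment input and your detour through the quantitative norm estimate $\|\widetilde\Phi_{t,x}\|_{-q}\le C''K_1$. Your approach has the mild advantage of working at the level of $L^2$-norms of $L^x(t)$ without explicitly invoking occupation times, but the paper's argument is shorter and self-contained.
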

\begin{proof}
	This immediately follows from \cite[Theorem 4.51]{HKSP1990} and the facts
	\begin{enumerate}
		\item $S\left(\widetilde{\Phi}_{t,x}(\bullet)\right)(\phi)=\exp\left(\int_{0}^t\phi(B_s^x(\bullet))ds\right)\times \left[u_0'(B_t^x(\bullet))+u_0(B_t^x)\int_0^t\phi'(B_s^x(\bullet))ds\right]$ is measurable for any $\phi \in \mathscr{S}(\mathbb R)$. 
		\item 
		$\left|S\left(\widetilde{\Phi}_{t,x}(\widetilde{\omega})\right)(\phi)\right|\leq \left(\|u_0'\|_{\infty}+t\|u_0\|_{\infty}\|\phi'\|_{\infty}\right)e^{t\|\phi\|_{\infty}}\leq e^{\|u_0'\|_{\infty}+t(\|u_0\|_{\infty}\|\phi'\|_{\infty}+\|\phi\|_{\infty})}\leq K_1e^{K_2|\phi|_p^2} $
		for some $p, K_1,K_2\geq 0$. The last inequality comes from
		the equivalence between standard seminorms in the Schwartz space and the system of $p$-\textit{seminorms}.
	\end{enumerate}
\end{proof}

Recall the Feynman-Kac representation of $u$:
\begin{align*}
u(t,x)=\mathbb E^B\left[u_0(B_t^x)\mathcal E(L^x(t))\right]
\end{align*}
and recall $C^1_b(\mathbb{R})$ is the space of all bounded differentiable functions on $\mathbb{R}$ with bounded continuous derivative.
\begin{theorem}
	Assume that $u_0\in C^1_b(\mathbb R)$. Then,
	for each $t>0$, $u(t,\bullet)$ is weakly continuously differentiable in $(S)^*$, and
	\begin{align*}
	\partial_xu(t,x)&=\mathbb E^B\left[\mathcal E(L^x(t))\diamond \bigg\{u_0'(B_t^x)+u_0(B_t^x)I_1\left(\partial_xL^x(t)\right)\bigg\}\right],
	\end{align*}
	where $\partial_xL^x(t)\in \mathscr{S}'(\mathbb R)$ is the pathwise distributional derivative of $L^x(t)$, and $\mathbb{E}^B$ must be understood as a Bochner integral in $(S)^*$.
\end{theorem}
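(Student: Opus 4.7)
The plan is to characterize $\partial_x u(t,x)$ via its S-transform, leveraging the bijection in Theorem \ref{thm:StransformBijection} to reduce weak differentiability in $(S)^*$ to ordinary differentiability of $\mathbb{C}$-valued maps. The candidate formula has already been made sense of in $(S)^*$ through Lemma \ref{lemma:Phi}, so the task is to show that its S-transform equals $\partial_x S(u(t,x))$.

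First I would start from \eqref{eq:Stu-v1} and, using the occupation-time formula $\int_{\mathbb{R}} L_y^x(t)\phi(y)\,dy=\int_0^t\phi(B_s^x)\,ds$, rewrite
\begin{align*}
S(u(t,x))(\phi)=\mathbb{E}^B\Bigl[u_0(B_t^x)\exp\bigl(\textstyle\int_0^t\phi(B_s^x)\,ds\bigr)\Bigr].
\end{align*}
Since $u_0,u_0'\in L^{\infty}(\mathbb{R})$ and $\phi,\phi'\in\mathscr{S}(\mathbb{R})$ are bounded, the difference quotient in $x$ is uniformly dominated in $\widetilde{\omega}$, so dominated convergence lets me differentiate under $\mathbb{E}^B$:
\begin{align*}
\partial_x S(u(t,x))(\phi)=\mathbb{E}^B\Bigl[e^{\int_0^t\phi(B_s^x)ds}\Bigl(u_0'(B_t^x)+u_0(B_t^x)\textstyle\int_0^t\phi'(B_s^x)ds\Bigr)\Bigr].
\end{align*}

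The crux is to recognize the inner factor as the S-transform of the Wick product appearing in Lemma \ref{lemma:Phi}. By translation invariance $L_y^x(t)=L_{y-x}^0(t)$, so pathwise as a tempered distribution $\partial_x L^x(t)=-\partial_y L^x(t)$, and formal integration by parts yields $\int_0^t\phi'(B_s^x)\,ds=\int_{\mathbb{R}}\phi'(y)L_y^x(t)\,dy=\langle \partial_x L^x(t),\phi\rangle=S(I_1(\partial_x L^x(t)))(\phi)$. Since $\mathcal{E}(L^x(t))$ is a Hida distribution with $S$-transform $\exp(\int_0^t\phi(B_s^x)ds)$, Definition \ref{def: Wick product} gives
\begin{align*}
S\bigl(\widetilde{\Phi}_{t,x}(\widetilde{\omega})\bigr)(\phi)=e^{\int_0^t\phi(B_s^x)ds}\Bigl(u_0'(B_t^x)+u_0(B_t^x)\textstyle\int_0^t\phi'(B_s^x)ds\Bigr)
\end{align*}
for each fixed $\widetilde{\omega}$. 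Combining this with Bochner integrability from Lemma \ref{lemma:Phi}, the continuity of $S$-transform with respect to Bochner integration yields $\partial_x S(u(t,x))(\phi)=S\bigl(\mathbb{E}^B[\widetilde{\Phi}_{t,x}]\bigr)(\phi)$ for every $\phi\in\mathscr{S}(\mathbb{R})$. Applying Theorem \ref{thm:StransformBijection} then identifies the $(S)^*$-limit of the difference quotient $h^{-1}(u(t,x+h)-u(t,x))$ with the claimed expression.

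For weak continuity of $x\mapsto\partial_xu(t,x)$ in $(S)^*$, I would again argue via the S-transform: the map $x\mapsto S(\partial_x u(t,x))(\phi)$ is continuous by dominated convergence (using continuity of Brownian paths, continuity of $u_0'$, and boundedness of $\phi,\phi'$), which is exactly the notion of weak continuity in $(S)^*$. The main obstacle is the rigorous interpretation of $I_1(\partial_x L^x(t))$ and of the Wick product involving it, since $\partial_x L^x(t)$ is only a tempered distribution in the space variable; this is precisely what Lemma \ref{lemma:Phi} handles by bounding the resulting $U$-functional uniformly in $\widetilde{\omega}$.
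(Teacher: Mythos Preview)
Your proposal is correct and follows essentially the same route as the paper: compute $S(u(t,x))(\phi)$ via the occupation-time formula, differentiate in $x$ under $\mathbb{E}^B$ by dominated convergence, recognize the result as $\mathbb{E}^B\big[S(\widetilde{\Phi}_{t,x})(\phi)\big]$ via Lemma~\ref{lemma:Phi}, interchange the Bochner integral with the S-transform, and invoke the bijectivity of $S$. The only point the paper makes more explicit is the passage from ``$\partial_x S(u(t,x))(\phi)$ exists and is continuous in $x$ for every $\phi$'' to ``$u(t,\bullet)$ is weakly continuously differentiable in $(S)^*$'': this requires checking that $\partial_x S(u(t,x))$ is a $U$-functional (which you have, via the bound in Lemma~\ref{lemma:Phi}) and then invoking a specific continuity/differentiability criterion for $(S)^*$-valued maps (the paper cites \cite[Lemma~A.1.2 and Lemma~A.3]{P1994}), rather than Theorem~\ref{thm:StransformBijection} alone.
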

\begin{proof}
	To find $\partial_x u(t,x)$, we start by computing the S-transform of $u$. From \eqref{eq:Stu-v1}, for $\phi\in \mathscr{S}(\mathbb R)$, we have
	\begin{align}\label{eq:Stu-v2}
	S\left(u(t,x)\right)(\phi)&=\mathbb E^{B}\left[u_0(B_t^x)\exp\left(\int_{\mathbb R} L^x_y(t)\phi(y)dy\right)\right]
	=\mathbb E^B\left[u_0(B_t^x)\exp\left(\int_{0}^t\phi(B_s^x)ds\right)\right],
	\end{align}
	where the last equality follows by the occupation time formula.
	
	It is clear that $x\in \mathbb{R}\mapsto S(u(t,x))(\phi)$ is continuous for all $\phi\in \mathscr{S}(\mathbb R)$, and $\left| S\left(u(t,x)\right)(\phi)\right|\leq K_1e^{K_2|\phi|_p^2}$ for some $K_1,K_2,p>0$.
	Then, by \cite[Lemma A.1.2]{P1994}, we can see that $u(t,x)=S^{-1}(S(u(t,x)))$ is \textit{weakly }continuous in $(S)^*$ (with respect to the $x$ variable).
	
	In order to prove that $u(t,\bullet)$ is \textit{weakly } continuously differentiable in $(S)^*$ we must first take the spatial derivative on both sides of \eqref{eq:Stu-v2}. Using the fact $\phi,\phi'\in \mathscr{S}(\mathbb R)\subset L^{\infty}(\mathbb R)$, by dominated convergence theorem (DCT), we obtain
	\begin{align*}
	\partial_x S(u(t,x))(\phi)=\mathbb E^B\left[u_0'(B_t^x)\exp\left(\int_{0}^t\phi(B_s^x)ds\right)+u_0(B_t^x)\exp\left(\int_{0}^t\phi(B_s^x)ds\right)\times \int_0^t\phi'(B_s^x)ds\right],
	\end{align*}
and it is clear that the map $x\mapsto\partial_x S(u(t,x))(\phi)$ is continuous for all $\phi\in\mathscr{S}(\mathbb R)$.

We also need to show $\partial_x S\left(u(t,x)\right)$ is a U-functional (see Definition \ref{U-functional}). 
By direct computation, we can verify that, as in the proof of Lemma \ref{lemma:Phi},
\begin{align*}
\left|\partial_x S\left(u(t,x)\right)(\phi)\right|\leq K_1e^{K_2|\phi|_p^2},
\end{align*}
where $K_1, K_2, p$ are positive real constants. 
Also, it is clear that the map $z\mapsto \partial_x S(u)(z\phi+\eta)$ is entire for any $\phi,\eta\in \mathscr{S}(\mathbb R)$ and $z\in\mathbb C$. Hence, $\partial_x S\left(u(t,x)\right)$ is indeed a U-functional, and thus there exists a unique $\Phi\in (S)^*$ such that $\partial_x S(u(t,x))=S(\Phi)$; then from \cite[Lemma A.3]{P1994}, we can conclude that $u(t,\bullet)$ is \textit{weakly} continuously differentiable in the Hida distribution space $(S)^*$.
Following the aforementioned reference, the \textit{weak} spatial derivative $\partial_x u$ of $u$ is defined as the unique element in $(S)^*$ such that 
\begin{align*}
S(\partial_x u(t,x))(\phi)=\partial_{x} S(u(t,x))(\phi).
\end{align*}

Using Lemma \ref{lemma:Phi}, we can see that  $\partial_xS\left( u(t,x)\right)=\mathbb{E}^B\left(S\left(\widetilde{\Phi}_{t,x}\right)\right)$, and furthermore  we have that $\mathbb{E}^B\left(S\left(\widetilde{\Phi}_{t,x}\right)\right)=S\left(\mathbb{E}^B\left(\widetilde{\Phi}_{t,x}\right)\right)$ since the Bochner integral $\mathbb{E}^B$ and the S-transform can be interchanged (e.g. \cite[Theorem 4.51]{HKSP1990}).  
	Finally, by Theorem \ref{thm:StransformBijection}, we can conclude
	\begin{align*}
	\partial_xu(t,x)&=\mathbb E^B\left[\mathcal E(L^x(t))\diamond \bigg\{u_0'(B_t^x)+u_0(B_t^x)I_1\left(\partial_xL^x(t)\right)\bigg\}\right],
	\end{align*}
	where $\mathbb E^B$ must be understood as a Bochner integral in $(S)^*$.
\end{proof}

From this result, we can only say that $\partial_x u(t,x)\in (S)^*$ for each $(t,x)\in [0,T]\times \mathbb{R}$. But, in the following subsection, we will show that $\partial_x u(t,x)\in \mathcal{G}$ using its chaos decomposition, and furthermore, we will investigate its H\"older regularity.


\subsection{Chaos decomposition for $\partial_xu$}

Let us find the chaos expansion of
\begin{align*}
\partial_xu(t,x)&=\mathbb E^B\left[\mathcal E(L^x(t))\diamond \bigg\{I_0(u_0'(B_t^x))+u_0(B_t^x)I_1\left(\partial_xL^x(t)\right)\bigg\}\right],
\end{align*}
and notice that in this context, the Wiener integral of order $0$ equals the identity operator, but nonetheless we explicitly write $I_0$ for notational convenience.

By Lemma \ref{lemma: chaos decomposition kernel}, we have
\begin{align*}
	\mathcal E(L^x(t))=\sum_{n=0}^{\infty}\frac{1}{n!}I_n\left(L^{x}(t)^{\otimes n}\right),\quad \mbox{convergent in $L^2\left(\mathbb{P}^W\right)$,}
\end{align*}	
and by the definition of \textit{Wick product}, we see that 
\begin{align*}
	\mathcal E(L^x(t))\diamond\big\{I_0(u_0'(B_t^x))+u_0(B_t^x)I_1\left(\partial_xL^x(t)\right)\big\}=\sum_{n=0}^{\infty}I_n(h_n(t,x)),\quad \mbox{convergent in $(S)^*$,}
\end{align*}
where $h_0(t,x)=u_0'\left(B_t^x\right)$, and
\begin{align*}
\mathscr{S}'(\mathbb R^n)\ni h_n(t,x;\bullet)=u_0(B_t^x)\operatorname{Sym}\left[\left(\frac{L^x(t)^{\otimes (n-1)}}{(n-1)!}\right)\otimes \partial_x L^x(t)\right](\bullet)+ u_0'(B_t^x)\left(\frac{L^x(t)^{\otimes n}}{n!}\right)(\bullet),\ n\geq 1.
\end{align*}

It is known that (e.g. \cite[Chapter 13.3]{K2018})  if  $\Psi(u)=\sum_{n=0}^{\infty} I_n(F_n(u))$ is Bochner integrable on $(M, \sigma(M),m)$, then $F_n$ is Bochner integrable on $(M, \sigma(M),m)$, and it holds that
$$\int_M \Psi(u) m(du)=\sum_{n=0}^{\infty} I_n\left(\int_M F_n(u)m(du)\right).$$ 
In our case, letting $(M,\sigma(M),m)=\left(\widetilde{\Omega},\widetilde{\mathcal{F}},\mathbb{P}^B\right)$, this would imply that
\begin{align*}
	\partial_xu(t,x)&=\sum_{n=0}^{\infty}I_n\left(\mathbb E^B\left[h_n(t,x)\right]\right),
\end{align*}
where $\mathbb{E}^B$ should be understood as a Bochner integral in $\mathscr S'(\mathbb R^n)$.

We can easily check that the first term of $\partial_xu(t,x)$ is $\mathbb{E}^B\left[u_0'\left(B_t^x\right)\right]=\partial_x u_{(\mathbf{0})}(t,x)$. Let's check the general $n$-th terms of $\partial_x u(t,x)$ for $n\geq 1$.
Since $h_n(t,x;\bullet)$ is a symmetric element of $\mathscr{S}'(\mathbb R^n)$, we can expand it with respect to $\{\mathfrak{e}_{\alpha}:\alpha\in\mathcal J_n\}$ as 
\begin{align}\label{eq:h_n expansion}
	h_n(t,x;\bullet)=\sum_{\alpha\in\mathcal J_n} \left\langle h_n(t,x),\mathfrak{e}_{\alpha}\right\rangle_n \mathfrak{e}_{\alpha}(\bullet),\quad\mbox{convergence in $\mathscr{S}'(\mathbb R^n)$},
\end{align}
where $\langle \bullet,\bullet\rangle_n$ is the bilinear product between $\mathscr{S}'(\mathbb R^n)$ and $\mathscr{S}(\mathbb R^n)$, and $$\mathfrak{e}_{\alpha}=\displaystyle\frac{1}{\sqrt{n!\alpha!}}\sum_{\sigma\in \mathcal{P}_n} e_{k_{\sigma(1)}(y_1)}\cdots e_{k_{\sigma(n)}(y_n)}$$ as defined in \eqref{eq:ealpha}.

It is clear by direct calculations that $\langle \operatorname{Sym}f,\operatorname{Sym}g\rangle_n=\langle f, \operatorname{Sym^2}g\rangle_n=\langle f, \operatorname{Sym}g\rangle_n.$  
Then, we have
\begin{align}\label{eq:hnea}
	\left\langle h_n(t,x),\mathfrak{e}_{\alpha}\right\rangle_n
	&=\frac{u_0(B_t^x)}{\sqrt{n!\alpha!}(n-1)!}\left\langle\left[{L^x(t)^{\otimes (n-1)}}\otimes\;\partial_xL^x(t)\right],\sum_{\sigma\in \mathcal P_{n}} \left[e_{k_{\sigma(1)}}\otimes\cdots{\otimes} e_{k_{\sigma(n)}}\right]\;\right\rangle_n\nonumber\\
	&\quad+\frac{u_0'(B_t^x)}{\sqrt{n!\alpha!}n!} \int_{\mathbb R^n}{L^x(t)^{\otimes (n)}}(y_1,...,y_n) \times \sum_{\sigma\in \mathcal P_{n}} \left[e_{k_{\sigma(1)}}\otimes\cdots{\otimes} e_{k_{\sigma(n)}}\right](y_1,...,y_n)d\mathbf{y}\nonumber\\
	&=\frac{u_0(B_t^x)}{\sqrt{n!\alpha!}(n-1)!}\sum_{\sigma\in \mathcal P_{n}}\int_{[0,t]^n}e_{k_{\sigma(1)}}\otimes\cdots{\otimes} e_{k_{\sigma(n)}}'(B_{s_1}^x,\dots,B_{s_n}^x)\;d\mathbf{s}\nonumber\\
	&\quad+\frac{u_0'(B_t^x)}{\sqrt{n!\alpha!}n!}\sum_{\sigma\in \mathcal P_{n}}\int_{[0,t]^n}e_{k_{\sigma(1)}}\otimes\cdots{\otimes} e_{k_{\sigma(n)}}(B_{s_1}^x,\dots,B_{s_n}^x)\;d\mathbf{s},
\end{align}
where in the last expression we used the occupation time formula and the fact that $-\partial_{x}L^x$ equals distributional derivative of the Brownian local time.

Also, taking $\mathbb E^B$ on both sides of \eqref{eq:h_n expansion}, we have
\begin{align}\label{EBhp}
	\mathbb E^B[h_n(t,x;\bullet)]&=\mathbb{E}^B\left[\sum_{\alpha\in\mathcal J_n}\left\langle h_n(t,x),\mathfrak{e}_{\alpha}\right\rangle_n\mathfrak{e}_{\alpha}(\bullet)\right].
\end{align}

At this point, we need the following Lemma to compute \eqref{EBhp}.
\begin{lemma}\cite[Lemma 11.45]{CB2013}
	Let $f:M\to X$ be Bochner integrable on $(M,\sigma(M),m)$ in $X$ and let $Y$ be a Banach space. If $T:X\to Y$ is a bounded operator, then $Tf:M\to Y$ is Bochner integrable on $(M,\sigma(M),m)$ in $Y$ and it holds that
	\begin{align*}
		\int_{M} Tfdm=T\left(\int_{M}fdm\right).
	\end{align*}
\end{lemma}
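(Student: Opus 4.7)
The plan is to invoke the very definition of the Bochner integral via approximation by simple (countably-valued) functions, together with the continuity of $T$. I would argue in three stages.

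First, I would verify the identity directly on simple functions. If $f=\sum_{i=1}^{k} x_i\chi_{A_i}$ with $x_i\in X$ and $m(A_i)<\infty$, then by linearity of $T$ the composition $Tf=\sum_{i=1}^{k} T(x_i)\chi_{A_i}$ is itself a $Y$-valued simple function, so by the definition of the Bochner integral on simple functions,
\begin{align*}
\int_M Tf\,dm=\sum_{i=1}^{k} T(x_i)m(A_i)=T\Bigl(\sum_{i=1}^{k} x_i m(A_i)\Bigr)=T\Bigl(\int_M f\,dm\Bigr).
\end{align*}

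Second, I would extend to a general Bochner integrable $f$. By definition there is a sequence $\{f_n\}$ of simple $X$-valued functions with $f_n\to f$ $m$-a.e.\ and $\int_M\|f_n-f\|_X\,dm\to 0$. Because $T$ is a bounded linear operator, for every $\omega\in M$ we have $\|T f_n(\omega)-Tf(\omega)\|_Y\le \|T\|_{\mathrm{op}}\|f_n(\omega)-f(\omega)\|_X$, which implies simultaneously that $Tf_n\to Tf$ $m$-a.e.\ and
\begin{align*}
\int_M\|Tf_n-Tf\|_Y\,dm\le \|T\|_{\mathrm{op}}\int_M\|f_n-f\|_X\,dm\longrightarrow 0.
\end{align*}
Thus $Tf$ is the $L^1(m;Y)$-limit of the $Y$-valued simple functions $Tf_n$, so $Tf$ is Bochner integrable and its integral equals $\lim_{n\to\infty}\int_M Tf_n\,dm$.

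Third, I would combine the previous two steps to conclude. The simple-function case yields $\int_M Tf_n\,dm=T(\int_M f_n\,dm)$, while Bochner integrability of $f$ gives $\int_M f_n\,dm\to\int_M f\,dm$ in $X$; since $T$ is continuous on $X$, we may pass to the limit and obtain $\int_M Tf\,dm=T(\int_M f\,dm)$. There is no real obstacle in this argument; it is a clean approximation proof, and the only care needed is to confirm that the simple functions approximating $f$ in Bochner sense also push through $T$ to give an analogous approximation for $Tf$, which is immediate from the operator-norm bound.
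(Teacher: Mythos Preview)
Your argument is correct and is the standard textbook proof of this classical fact about Bochner integrals. Note, however, that the paper does not supply its own proof of this lemma at all: it simply quotes the result from \cite[Lemma 11.45]{CB2013} and uses it as a black box. So there is nothing to compare against; your three-step approximation argument (simple functions, $L^1$-approximation pushed through $T$ via the operator-norm bound, and passage to the limit using continuity of $T$) is exactly the proof one would find in the cited reference or any functional analysis text.
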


In our case, we set $\mathscr{S}'_{\mbox{sym}}(\mathbb R^n):=$ the symmetric part of $\mathscr{S}'(\mathbb R^n)$ and $T:\mathscr{S}'(\mathbb R^n)\to \mathscr{S}'_{\mbox{sym}}(\mathbb R^n)$
equals the orthogonal projection on $\mathscr{S}'_{\mbox{sym}}(\mathbb R^n)$, which is clearly a bounded linear operator.
Even though if it is well-known that the space of tempered distributions is not a Banach space, we can think of $\mathscr{S}'(\mathbb R)$ as the inductive limit of a family of Hilbert spaces (e.g \cite[Section 3.2]{K2018} or \cite{PR1991}), and an analogous reasoning extends to the multi-dimensional case; thus the lemma above holds true by letting $Y$ be some of those Hilbert spaces.

Then, \eqref{EBhp} becomes
\begin{align*}
	\mathbb E^B[h_n(t,x;\bullet)]&=\mathbb{E}^B\left[\sum_{\alpha\in\mathcal J_n}\langle h_n(t,x),\mathfrak{e}_{\alpha}\rangle_n \mathfrak{e}_{\alpha}(\bullet)\right]\nonumber
	=\sum_{\alpha\in\mathcal J_n} \left\langle \mathbb{E}^B\left[h_n(t,x)\right],\mathfrak{e}_{\alpha}\right\rangle_n\mathfrak{e}_{\alpha}(\bullet).
\end{align*}

It is known that if a function is Bochner integrable, then its Pettis and Bochner integrals coincide (see for instance the discussion on page 80 of \cite{HP1996}). Therefore, by definition of the Pettis integral, we have 
\begin{align*}
	\left\langle \mathbb{E}^B\left[h_n(t,x)\right],\mathfrak{e}_{\alpha}\right\rangle_n=\mathbb E^B\left[\langle h_n(t,x),\mathfrak{e}_{\alpha}\rangle_n\right].
\end{align*}
Hence, we have
\begin{align*}
	\mathbb E^B[h_n(t,x;\bullet)]=\sum_{\alpha\in\mathcal J_n} \mathbb E^B\left[\langle h_n(t,x),\mathfrak{e}_{\alpha}\rangle_n\right]\mathfrak{e}_{\alpha}(\bullet).
\end{align*}
Next we compute $\mathbb{E}^B\left[\left\langle h_n(t,x),\mathfrak{e}_{\alpha}\right\rangle_n\right]$, and so far from
\eqref{eq:hnea}, we have
\begin{align}\label{eq:EBhnea}
	\mathbb{E}^B\left[\left\langle h_n(t,x),\mathfrak{e}_{\alpha}\right\rangle_n\right]&=\mathbb E^B\bigg[\frac{u_0(B_t^x)}{\sqrt{n!\alpha!}(n-1)!}\int_{[0,t]^n}\sum_{\sigma\in \mathcal P_{n}}e_{k_{\sigma(1)}}\otimes\cdots{\otimes} e_{k_{\sigma(n)}}'(B_{s_1}^x,\dots,B_{s_n}^x)\;d\mathbf{s}\nonumber\\
	&\quad\quad\quad+\frac{u_0'(B_t^x)}{\sqrt{n!\alpha!}n!}\sum_{\sigma\in \mathcal P_{n}}\int_{[0,t]^n}e_{k_{\sigma(1)}}\otimes\cdots{\otimes} e_{k_{\sigma(n)}}(B_{s_1}^x,\dots,B_{s_n}^x)\;d\mathbf{s} \bigg].
\end{align}

To simplify the expression in \eqref{eq:EBhnea}, we first observe
\begin{align*}
	\partial_{x} \left[e_{k_1}(B_{s_1}^x)\cdots e_{k_n}(B_{s_n}^x)\right]=\underbrace{\left[e_{k_1}'(B_{s_1}^x)\cdots e_{k_n}(B_{s_n}^x)\right]+\cdots +\left[e_{k_1}(B_{s_1}^x)\cdots e_{k_n}'(B_{s_n}^x)\right]}_{\mbox{$n$ terms}}.
\end{align*}
Since the Lebesgue measure is invariant under rotations, we see that for any $f:[0,t]^n\to\mathbb R$, it holds that 
\begin{align*}
	\int_{[0,t]^n}f(s_1,\dots,s_n)\;d\mathbf{s}=\int_{[0,t]^n}\operatorname{Sym}f(s_1,\dots,s_n)\;d\mathbf{s}.
\end{align*}
Thus, for any permutation $\sigma$ of $\{1,\dots,n\}$, we have
\begin{align*}
	\int_{[0,t]^n}\frac{u_0(B_t^x)}{(n-1)!}\sum_{\sigma\in \mathcal P_{n}}e_{k_{\sigma(1)}}\otimes\cdots{\otimes} e_{k_{\sigma(n)}}'(B_{s_1}^x,\dots,B_{s_n}^x)\;d\mathbf{s}=\int_{[0,t]^n}u_0(B_t^x)\partial_{x}\left[e_{k_1}(B_{s_1}^x)\cdots e_{k_n}(B_{s_n}^x)\right]\;d\mathbf{s},
\end{align*}
and 
\begin{align*}
\int_{[0,t]^n}\frac{u_0'(B_t^x)}{n!}\sum_{\sigma\in \mathcal P_{n}}e_{k_{\sigma(1)}}\otimes\cdots{\otimes} e_{k_{\sigma(n)}}(B_{s_1}^x,\dots,B_{s_n}^x)\;d\mathbf{s}=\int_{[0,t]^n}u_0'(B_t^x)e_{k_1}(B_{s_1}^x)\cdots e_{k_n}(B_{s_n}^x)\;d\mathbf{s},
\end{align*}
which implies 
\begin{align}\label{eq:EBhnea-0}
\mathbb{E}^B\left[\left\langle h_n(t,x),\mathfrak{e}_{\alpha}\right\rangle_n\right]=\frac{1}{\sqrt{n!\alpha!}}\mathbb{E}^B\left[\int_{\mathbb T^n_{[0,t]}}\partial_{x}\left[u_0(B_t^x)e_{k_1}(B_{s_1}^x)\cdots e_{k_n}(B_{s_n}^x)\right]\;d\mathbf{s}\right],
\end{align}
where again $\mathbb T^n_{[0,t]}=\{0\leq s_1\leq \cdots\leq s_n\leq t\}$.

Furthermore, we notice that
\begin{align*}
&\int_{[0,t]^n}\left[e_{k_1}(B_{s_1}^x)\cdots e_{k_n}(B_{s_n}^x)\right]\;d\mathbf{s}=\int_{[0,t]^n}\operatorname{Sym}\left[e_{k_1}(B_{s_1}^x)\cdots e_{k_n}(B_{s_n}^x)\right]\;d\mathbf{s}\\
&=\int_{[0,t]^n}\frac{1}{n!}\sum_{\sigma\in \mathcal P_{p}}\left[e_{k_1}(B_{s_{\sigma(1)}}^x)\cdots e_{k_p}(B_{s_{\sigma(n)}}^x)\right]\;d\mathbf{s}=\sqrt{\frac{\alpha!}{n!}}\int_{[0,t]^n}\partial_{x}\mathfrak{e}_{\alpha}(B_{s_1}^x,\dots,B_{s_n}^x)\;d\mathbf{s}\\
&=\sqrt{{\alpha!n!}}\int_{\mathbb T^n_{[0,t]}}\partial_{x}\mathfrak{e}_{\alpha}(B_{s_1}^x,\dots,B_{s_n}^x)\;d\mathbf{s},
\end{align*}
and similarly,
\begin{align*}
	&\int_{[0,t]^n}\partial_{x}\left[e_{k_1}(B_{s_1}^x)\cdots e_{k_n}(B_{s_n}^x)\right]\;d\mathbf{s}
	=\sqrt{{\alpha!n!}}\int_{\mathbb T^n_{[0,t]}}\partial_{x}\mathfrak{e}_{\alpha}(B_{s_1}^x,\dots,B_{s_n}^x)\;d\mathbf{s}.
\end{align*}
Therefore, \eqref{eq:EBhnea-0} becomes
\begin{align}\label{eq:EBhnea-1}
	\mathbb{E}^B\left[\left\langle h_n(t,x),\mathfrak{e}_{\alpha}\right\rangle_n\right]&=\mathbb{E}^B\left[\int_{\mathbb T^n_{[0,t]}}\partial_{x}\left[u_0(B_t^x)\mathfrak{e}_{\alpha}(B_{s_1}^x,\dots,B_{s_n}^x)\right]\;d\mathbf{s}\right]\nonumber\\
	&=
	\int_{\mathbb T^n_{[0,t]}}\mathbb{E}^B\left[\partial_{x}\left[u_0(B_t^x)\mathfrak{e}_{\alpha}(B_{s_1}^x,\dots,B_{s_n}^x)\right]\right]\;d\mathbf{s},
\end{align}
where the second equality holds true by Fubini lemma.

By conditioning iteratively on the filtration of $B$ at the sites $s_n,s_{n-1},\dots,s_1$, we can rewrite \eqref{eq:EBhnea-1} as
\begin{align}\label{eq:EBhnea-2}
	\mathbb{E}^B\left[\left\langle h_n(t,x),\mathfrak{e}_{\alpha}\right\rangle_n\right]&=\int_{\mathbb T^n_{[0,t]}}\int_{\mathbb R^n}\partial_{x}\mathfrak{e}_{\alpha}(y_1+x,y_2,\dots,y_n) p(s_n-s_{n-1},y_n-y_{n-1})\times \nonumber\\
	&\quad\quad \qquad\qquad \cdots\times p(s_1,y_1)u_{(\mathbf{0})}(t-s_n,y_n)d\mathbf{y}\; d\mathbf{s}\nonumber\\
	&=\int_{\mathbb T^n_{[0,t]}}\int_{\mathbb R^p}\partial_{y_1}\mathfrak{e}_{\alpha}(y_1+x,y_2,\dots,y_n)p(s_n-s_{n-1},y_n-y_{n-1})\nonumber\\
	&\quad\quad \qquad\qquad  \cdots\times p(s_1,y_1)u_{(\mathbf{0})}(t-s_n,y_n) d\mathbf{y}\; d\mathbf{s}\nonumber\\
	&=-\int_{\mathbb T^n_{[0,t]}}\int_{\mathbb R^n}\mathfrak{e}_{\alpha}(y_1,y_2,\dots,y_n)p(s_n-s_{n-1},y_n-y_{n-1})\nonumber\\
	&\quad\quad \quad\quad  \quad\cdots\times \partial_{y_1}p(s_1,y_1-x)u_{(\mathbf{0})}(t-s_n,y_n)d\mathbf{y}\; d\mathbf{s}.
\end{align}

Noticing that $-\partial_y p(s,y-x)=\partial_x p(s,x-y)$ and letting $r_i=t-s_{n+1-i}$ for $i\in\{1,\dots,n\}$, the equation \eqref{eq:EBhnea-2} becomes
\begin{align*}
	\int_{\mathbb T^n_{[0,t]}}\int_{\mathbb R^n}\mathfrak{e}_{\alpha}(y_1,y_2,\dots,y_n)p(r_2-r_1,y_n-y_{n-1})\times \cdots\times \partial_{x} p(t-r_n,x-y_1)u_{(\mathbf{0})}(r_1,y_n)d\mathbf{y}\; d\mathbf{r},
\end{align*} 
which yields, after relabeling the $y$'s,
\begin{align*}
	\mathbb{E}^B\left[\left\langle h_n(t,x),\mathfrak{e}_{\alpha}\right\rangle_n\right]=\int_{\mathbb T^n_{[0,t]}}\int_{\mathbb R^n}&\partial_{x} p(t-r_n,x-y_n) p(r_n-r_{n-1},y_{n}-y_{n-1})\times \cdots\times  p(r_2-r_1,y_2-y_1)\\
	&\cdots\times \mathfrak{e}_{\alpha}(y_1,y_2,\dots,y_n)u_{(\mathbf{0})}(r_1,y_1) d\mathbf{y}\; d\mathbf{r}.
\end{align*}

Using the definition of $\mathfrak{e}_{\alpha}$, it equals
\begin{align*}
	\frac{1}{\sqrt{\alpha!}}\frac{1}{\sqrt{n!}}&\sum_{\sigma\in \mathcal P_{n}}\int_{\mathbb T^n_{[0,t]}}\int_{\mathbb R^n}\partial_{x} p(t-r_n,x-y_n) p(r_n-r_{n-1},y_{n}-y_{n-1})\times \cdots\times  p(r_2-r_1,y_2-y_1)\\
	&\cdots\times e_{k_{\sigma(1)}}(y_1)\times\cdots\times e_{k_{\sigma(n)}}(y_n)u_{(\mathbf{0})}(r_1,y_1)d\mathbf{y}\; d\mathbf{r}=:\frac{1}{\sqrt{n!}}\mathfrak{K}_{\alpha}(t,x),\quad |\alpha|=n\geq 1.
\end{align*}

Putting all together, we obtain
\begin{align*}
	\mathbb E^B\left[h_n(t,x;\bullet)\right]=\sum_{\alpha\in\mathcal J_n}\frac{1}{\sqrt{n!}}\mathfrak{K}_{\alpha}(t,x)\mathfrak{e}_{\alpha}(\bullet)
\end{align*}
and
\begin{align*}
I_n\left(\mathbb E^B\left[h_n(t,x)\right]\right)&=\sum_{\alpha\in\mathcal J_n}\mathfrak{K}_{\alpha}(t,x)I_n\left(\frac{\mathfrak{e}_{\alpha}}{\sqrt{n!}}\right)=\sum_{\alpha\in\mathcal J_n}\mathfrak{K}_{\alpha}(t,x)\xi_{\alpha} \quad \mbox{by \eqref{eq:Inxia}.}
\end{align*}

Finally, we have the chaos expansion of $\partial_x u(t,x)$ as follows:
\begin{align}\label{eq:uxChaos}
	\partial_{x} u(t,x)=\sum_{n=0}^{\infty}I_n\left(\mathbb E^B\left[h_n(t,x)\right]\right)=\sum_{n=0}^{\infty}\sum_{\alpha\in\mathcal J_n}\mathfrak{K}_{\alpha}(t,x)\xi_{\alpha}=\sum_{\alpha\in\mathcal J}\mathfrak{K}_{\alpha}(t,x)\xi_{\alpha},
\end{align}
where $\mathfrak{K}_{(\mathbf{0})}(t,x)=\partial_x u_{(\mathbf{0})}(t,x)$, and for $|\alpha|=n\geq 1$,
\begin{align*}
	\mathfrak{K}_{\alpha}(t,x)=\frac{1}{\sqrt{\alpha!}}\sum_{\sigma\in \mathcal P_{n}}\int_{\mathbb T^n_{[0,t]}}\int_{\mathbb R^n}&\partial_{x} p(t-r_n,x-y_n)p(r_n-r_{n-1},y_{n}-y_{n-1})\times \cdots\times  p(r_2-r_1,y_2-y_1)\\
	&\cdots\times e_{k_{\sigma(1)}}(y_1)\times\cdots\times e_{k_{\sigma(n)}}(y_n)u_{(\mathbf{0})}(r_1,y_1)d\mathbf{y}\; d\mathbf{r}.
\end{align*}

\begin{remark}
	Using the Feynman-Kac representation of $u(t,x)$, it was possible to compute the weak derivative of $u(t,x)$ with respect to $x$ in $(S)^*$ as follows: $$\partial_x u(t,x)=\sum_{\alpha\in\mathcal J}\mathfrak{K}_{\alpha}(t,x)\xi_{\alpha},$$
	where $\mathfrak{K}_{(\mathbf{0})}(t,x)=\partial_x u_{(\mathbf{0})}(t,x)$, and for $|\alpha|=n\geq 1$,
	\begin{align*}
	\mathfrak{K}_{\alpha}(t,x)=\frac{1}{\sqrt{\alpha!}}\sum_{\sigma\in \mathcal P_{n}}\int_{\mathbb T^n_{[0,t]}}\int_{\mathbb R^n}&\partial_{x} p(t-r_n,x-y_n)p(r_n-r_{n-1},y_{n}-y_{n-1})\times \cdots\times  p(r_2-r_1,y_2-y_1)\\
	&\cdots\times e_{k_{\sigma(1)}}(y_1)\times\cdots\times e_{k_{\sigma(n)}}(y_n)u_{(\mathbf{0})}(r_1,y_1)d\mathbf{y}\; d\mathbf{r}.
	\end{align*}
	In fact, the weak spatial derivative is the usual spatial derivative in $L^2\left(\mathbb{P}^W\right)$ sense (and hence $\mathbb{P}^W$ almost sure sense). One can verify this assertion by computing $\partial_x u(t,x)$ using
	the chaos expansion of $u(t,x)$ directly and by the uniqueness of mild solution.
\end{remark}

Before we show $\partial_xu(t,x)\in \mathcal{G}$ for each $t>0$ and $x\in \mathbb{R}$, we first need the following lemma:
\begin{lemma}\label{lem:Ke->K}
	Assume that $u_0\in C_b^1(\mathbb{R})$. Then,
	for each $\alpha\in \mathcal{J}$, $t>0$ and $x\in \mathbb{R}$, $\mathfrak{K}_{\alpha}(t,x)$ is well-defined, and moreover, for $|\alpha|\geq 1$,
	$$\lim_{\varepsilon\to 0^+} \mathfrak{K}_{\alpha}^{\epsilon}(t,x)=\mathfrak{K}_{\alpha}(t,x),$$
	where
	\begin{align*}
	\mathfrak{K}_{\alpha}^{\epsilon}(t,x):=\frac{1}{\sqrt{\alpha!}}\sum_{\sigma\in \mathcal P_{n}}\int_{\mathbb T^n_{[0,t-\epsilon]}}\int_{\mathbb R^n}&\partial_{x} p(t-r_n,x-y_n)p(r_n-r_{n-1},y_{n}-y_{n-1})\times \cdots\times  p(r_2-r_1,y_2-y_1)\\
	&\cdots\times e_{k_{\sigma(1)}}(y_1)\times\cdots\times e_{k_{\sigma(n)}}(y_n)u_{(\mathbf{0})}(r_1,y_1)d\mathbf{y}\; d\mathbf{r}\qquad \mbox{for $\epsilon>0$},
	\end{align*}
	with $\mathbb T^n_{[0,t-\epsilon]}:=\{0\leq s_1\leq\cdots\leq s_n\leq t-\epsilon\}$.
\end{lemma}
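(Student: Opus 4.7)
The plan is to reduce everything to a single pointwise bound on the $y$-integrals that exhibits an integrable $(t-r_n)^{-1/2}$ singularity in $r_n$. The case $\alpha=(\mathbf 0)$ is immediate: $\mathfrak{K}_{(\mathbf 0)}(t,x)=\partial_x u_{(\mathbf 0)}(t,x)=\int_{\mathbb R}p(t,x-y)u_0'(y)\,dy$ is bounded by $\|u_0'\|_\infty$ since $u_0\in C^1_b(\mathbb R)$, so I focus on $|\alpha|=n\ge 1$. The key analytic input is the sharp $L^1$-in-space bound
$$\int_{\mathbb R}|\partial_x p(s,z)|\,dz=\sqrt{\tfrac{2}{\pi s}},$$
which follows from $\partial_x p(s,z)=-\tfrac{z}{s}p(s,z)$ and $\mathbb E[|Z|]=\sqrt{2s/\pi}$ for $Z\sim N(0,s)$.

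First I would bound the non-Gaussian factors in absolute value: $|u_{(\mathbf 0)}(r_1,y_1)|\le\|u_0\|_\infty$, and $|e_{k_{\sigma(j)}}(y_j)|\le M_{k_j}:=\|e_{k_j}\|_\infty<\infty$ since every Hermite function lies in $\mathscr S(\mathbb R)\subset L^\infty(\mathbb R)$. Setting $C_\alpha:=\sqrt{2/\pi}\prod_j M_{k_j}$ (which is independent of the permutation $\sigma$), I apply Fubini and integrate the $y$-variables in the order $y_1,y_2,\dots,y_{n-1}$: each of the intermediate heat kernels $p(r_i-r_{i-1},y_i-y_{i-1})$ integrates to $1$, and only the final integration in $y_n$ against $|\partial_x p(t-r_n,x-y_n)|$ survives. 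This gives, uniformly in $\sigma\in\mathcal P_n$,
$$\int_{\mathbb R^n}|\partial_x p(t-r_n,x-y_n)|\prod_{i=2}^n p(r_i-r_{i-1},y_i-y_{i-1})\prod_{j=1}^n|e_{k_{\sigma(j)}}(y_j)||u_{(\mathbf 0)}(r_1,y_1)|\,d\mathbf y\le \frac{C_\alpha\|u_0\|_\infty}{\sqrt{t-r_n}}.$$

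Then I integrate over $\mathbb T^n_{[0,t]}$. By the standard slicing $\operatorname{vol}\{0\le r_1\le\cdots\le r_{n-1}\le r_n\}=r_n^{n-1}/(n-1)!$ and a beta integral,
$$\int_{\mathbb T^n_{[0,t]}}\frac{d\mathbf r}{\sqrt{t-r_n}}=\frac{1}{(n-1)!}\int_0^t\frac{r_n^{n-1}}{\sqrt{t-r_n}}\,dr_n=\frac{\sqrt{\pi}\,t^{n-1/2}}{\Gamma(n+\tfrac12)}<\infty,$$
which is finite for every $n\ge 1$. Summing over the $n!$ permutations yields $|\mathfrak{K}_\alpha(t,x)|\le n!\,C_\alpha\|u_0\|_\infty\sqrt{\pi}\,t^{n-1/2}/(\sqrt{\alpha!}\,\Gamma(n+\tfrac12))$, proving that $\mathfrak{K}_\alpha(t,x)$ is well defined as an absolutely convergent Lebesgue integral.

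Finally, the same pointwise bound controls the tail. Observing that $\mathbb T^n_{[0,t]}\setminus\mathbb T^n_{[0,t-\epsilon]}=\{\mathbf r\in\mathbb T^n_{[0,t]}:r_n>t-\epsilon\}$ (by the ordering $r_n\ge r_i$), I get
$$|\mathfrak{K}_\alpha(t,x)-\mathfrak{K}_\alpha^\epsilon(t,x)|\le\frac{n!\,C_\alpha\|u_0\|_\infty}{\sqrt{\alpha!}\,(n-1)!}\int_{t-\epsilon}^{t}\frac{r_n^{n-1}}{\sqrt{t-r_n}}\,dr_n,$$
and the right-hand side vanishes as $\epsilon\to 0^+$ by dominated convergence, giving the claimed limit. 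The only place one has to be careful is precisely the $L^1$-in-space estimate for $\partial_x p$ at the boundary $r_n\to t$; once that is handled, the rest is routine Fubini and beta-function bookkeeping.
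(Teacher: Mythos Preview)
Your proof is correct and takes a more elementary route than the paper. The paper establishes well-definedness by repeated integration by parts: writing $\partial_x p(t-r_n,x-y_n)=-\partial_{y_n}p(t-r_n,x-y_n)$ and then using $\partial_{y_i}p(r_i-r_{i-1},y_i-y_{i-1})=-\partial_{y_{i-1}}p(r_i-r_{i-1},y_i-y_{i-1})$, it propagates the derivative down the chain of heat kernels, at each step spinning off a well-defined term carrying $e'_{k_{\sigma(i)}}$, until the derivative finally lands on $u_{(\mathbf 0)}$; this decomposes $\mathfrak{K}_\alpha$ into $n$ pieces none of which has a time singularity, but it genuinely uses $u_0'\in L^\infty$ for the last piece. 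You instead bound the Hermite functions and $u_{(\mathbf 0)}$ in sup-norm, let the undifferentiated kernels integrate to $1$ in $y_1,\dots,y_{n-1}$, and invoke the $L^1$ estimate $\int_{\mathbb R}|\partial_x p(s,\cdot)|\,dz=\sqrt{2/(\pi s)}$ to reduce to an integrable $(t-r_n)^{-1/2}$ singularity; the tail estimate over $\{r_n>t-\epsilon\}$ then falls out of the same bound. Your argument is shorter and, for $|\alpha|\ge 1$, needs only $u_0\in L^\infty$; the paper's integration-by-parts route trades a longer decomposition for eliminating the time singularity entirely.
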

\begin{proof}
	We will decompose $\mathfrak{K}_{\alpha}(t,x)$ as a finite sum of well-defined terms.	Without loss of generality, we let $|\alpha|=n\geq 1$.
	Notice that
	\begin{align*}
	\mathfrak{K}_{\alpha}(t,x)&=\frac{1}{\sqrt{\alpha!}}\sum_{\sigma\in \mathcal P_{n}}\int_{\mathbb T^n_{[0,t]}}\int_{\mathbb R^n}\partial_{x} p(t-r_n,x-y_n)p(r_n-r_{n-1},y_{n}-y_{n-1})\times \cdots\times  p(r_2-r_1,y_2-y_1)\\
	&\qquad \qquad \qquad \qquad \quad \times e_{k_{\sigma(1)}}(y_1)\times\cdots\times e_{k_{\sigma(n)}}(y_n)u_{(\mathbf{0})}(r_1,y_1)d\mathbf{y}\; d\mathbf{r}\\
	&=-\frac{1}{\sqrt{\alpha!}}\sum_{\sigma\in \mathcal P_{n}}\int_{\mathbb T^n_{[0,t]}}\int_{\mathbb R^n}\partial_{y_n} p(t-r_n,x-y_n)p(r_n-r_{n-1},y_{n}-y_{n-1})\times \cdots\times  p(r_2-r_1,y_2-y_1)\\
	&\qquad \qquad \qquad \qquad \quad \times e_{k_{\sigma(1)}}(y_1)\times\cdots\times e_{k_{\sigma(n)}}(y_n)u_{(\mathbf{0})}(r_1,y_1)d\mathbf{y}\; d\mathbf{r}\\
	&=\frac{1}{\sqrt{\alpha!}}\sum_{\sigma\in \mathcal P_{n}}\int_{\mathbb T^n_{[0,t]}}\int_{\mathbb R^n} p(t-r_n,x-y_n)\partial_{y_{n}}\left(p(r_n-r_{n-1},y_{n}-y_{n-1})e_{k_{\sigma(n)}}(y_n)\right)\times \cdots  \\
	&\qquad \qquad \qquad \qquad \quad \times p(r_2-r_1,y_2-y_1) e_{k_{\sigma(1)}}(y_1)\times\cdots\times e_{k_{\sigma(n-1)}}(y_{n-1}) u_{(\mathbf{0})}(r_1,y_1)d\mathbf{y}\; d\mathbf{r}\\
	&=\frac{1}{\sqrt{\alpha!}}\sum_{\sigma\in \mathcal P_{n}}\int_{\mathbb T^n_{[0,t]}}\int_{\mathbb R^n} p(t-r_n,x-y_n)\partial_{y_{n}}p(r_n-r_{n-1},y_{n}-y_{n-1})e_{k_{\sigma(n)}}(y_n)\times \cdots  \\
	&\qquad \qquad \qquad \qquad \quad \times p(r_2-r_1,y_2-y_1) e_{k_{\sigma(1)}}(y_1)\times\cdots\times e_{k_{\sigma(n-1)}}(y_{n-1}) u_{(\mathbf{0})}(r_1,y_1)d\mathbf{y}\; d\mathbf{r}\\
	&\quad +\frac{1}{\sqrt{\alpha!}}\sum_{\sigma\in \mathcal P_{n}}\int_{\mathbb T^n_{[0,t]}}\int_{\mathbb R^n} p(t-r_n,x-y_n)p(r_n-r_{n-1},y_{n}-y_{n-1})e'_{k_{\sigma(n)}}(y_n)\times \cdots  \\
	&\qquad \qquad \qquad \qquad \quad \times p(r_2-r_1,y_2-y_1) e_{k_{\sigma(1)}}(y_1)\times\cdots\times e_{k_{\sigma(n-1)}}(y_{n-1}) u_{(\mathbf{0})}(r_1,y_1)d\mathbf{y}\; d\mathbf{r}\\
	&=:a_1+b_1.
	\end{align*}
	We note that $b_1$ is well-defined since $u_0\in L^{\infty}(\mathbb{R})$, and for $a_1$, we do a similar step as above:
	\begin{align*}
	a_1&=-\frac{1}{\sqrt{\alpha!}}\sum_{\sigma\in \mathcal P_{n}}\int_{\mathbb T^n_{[0,t]}}\int_{\mathbb R^n} p(t-r_n,x-y_n)\partial_{y_{n-1}}p(r_n-r_{n-1},y_{n}-y_{n-1})e_{k_{\sigma(n)}}(y_n)\times \cdots  \\
	&\qquad \qquad \qquad \qquad \quad \times p(r_2-r_1,y_2-y_1) e_{k_{\sigma(1)}}(y_1)\times\cdots\times e_{k_{\sigma(n-1)}}(y_{n-1}) u_{(\mathbf{0})}(r_1,y_1)d\mathbf{y}\; d\mathbf{r}\\
	&=\frac{1}{\sqrt{\alpha!}}\sum_{\sigma\in \mathcal P_{n}}\int_{\mathbb T^n_{[0,t]}}\int_{\mathbb R^n} p(t-r_n,x-y_n)p(r_n-r_{n-1},y_{n}-y_{n-1})e_{k_{\sigma(n)}}(y_n)\\
	&\qquad \qquad \qquad \qquad \quad\times \partial_{y_{n-1}}\left(p(r_{n-1}-r_{n-2},y_{n-1}-y_{n-2})e_{k_{\sigma(n-1)}}(y_{n-1})\right)\times \cdots  \\
	&\qquad \qquad \qquad \qquad \quad \times p(r_2-r_1,y_2-y_1) e_{k_{\sigma(1)}}(y_1)\times\cdots\times e_{k_{\sigma(n-2)}}(y_{n-2}) u_{(\mathbf{0})}(r_1,y_1)d\mathbf{y}\; d\mathbf{r}\\
	&=\frac{1}{\sqrt{\alpha!}}\sum_{\sigma\in \mathcal P_{n}}\int_{\mathbb T^n_{[0,t]}}\int_{\mathbb R^n} p(t-r_n,x-y_n)p(r_n-r_{n-1},y_{n}-y_{n-1})e_{k_{\sigma(n)}}(y_n)\\
	&\qquad \qquad \qquad \qquad \quad\times \partial_{y_{n-1}}p(r_{n-1}-r_{n-2},y_{n-1}-y_{n-2})e_{k_{\sigma(n-1)}}(y_{n-1})\times \cdots  \\
	&\qquad \qquad \qquad \qquad \quad \times p(r_2-r_1,y_2-y_1) e_{k_{\sigma(1)}}(y_1)\times\cdots\times e_{k_{\sigma(n-2)}}(y_{n-2}) u_{(\mathbf{0})}(r_1,y_1)d\mathbf{y}\; d\mathbf{r}\\
	&\quad +\frac{1}{\sqrt{\alpha!}}\sum_{\sigma\in \mathcal P_{n}}\int_{\mathbb T^n_{[0,t]}}\int_{\mathbb R^n} p(t-r_n,x-y_n)p(r_n-r_{n-1},y_{n}-y_{n-1})e_{k_{\sigma(n)}}(y_n)\\
	&\qquad \qquad \qquad \qquad \quad\times p(r_{n-1}-r_{n-2},y_{n-1}-y_{n-2})e'_{k_{\sigma(n-1)}}(y_{n-1})\times \cdots  \\
	&\qquad \qquad \qquad \qquad \quad \times p(r_2-r_1,y_2-y_1) e_{k_{\sigma(1)}}(y_1)\times\cdots\times e_{k_{\sigma(n-2)}}(y_{n-2}) u_{(\mathbf{0})}(r_1,y_1)d\mathbf{y}\; d\mathbf{r}\\
	&=:a_2+b_2.
	\end{align*}
	Again, $b_2$ is well-defined. By iterating this process, we can get 
	$$\mathfrak{K}_{\alpha}(t,x)=a_{n-1}+\sum_{i=1}^{n-1}b_i,$$
	where $\displaystyle\sum_{i=1}^{n-1}b_i$ is well-defined and
	\begin{align*}
	a_{n-1}&=\frac{1}{\sqrt{\alpha!}}\sum_{\sigma\in \mathcal P_{n}}\int_{\mathbb T^n_{[0,t]}}\int_{\mathbb R^n} p(t-r_n,x-y_n)p(r_n-r_{n-1},y_{n}-y_{n-1})\times \cdots\times  p(r_2-r_1,y_2-y_1)\\
	&\times e_{k_{\sigma(1)}}(y_1)\times\cdots\times e_{k_{\sigma(n)}}(y_n)\partial_{y_{1}}u_{(\mathbf{0})}(r_1,y_1)d\mathbf{y}\; d\mathbf{r}.
	\end{align*}
	Since $u_{(\mathbf{0})}(r_1,y_1)d\mathbf{y}=\displaystyle\int_{\mathbb{R}} p(r_1,y_1-y_0)u_0(y_0)dy_0$, $a_{n-1}$ becomes
	\begin{align*}
	a_{n-1}&=\frac{1}{\sqrt{\alpha!}}\sum_{\sigma\in \mathcal P_{n}}\int_{\mathbb T^n_{[0,t]}}\int_{\mathbb R^{n+1}} p(t-r_n,x-y_n)p(r_n-r_{n-1},y_{n}-y_{n-1})\times \cdots\times  p(r_2-r_1,y_2-y_1)\\
	&\times e_{k_{\sigma(1)}}(y_1)\times\cdots\times e_{k_{\sigma(n)}}(y_n)p(r_1,y_1-y_0)u_0'(y_0)dy_0d\mathbf{y}\; d\mathbf{r},
	\end{align*}
	which is clearly well-defined since $u_0'\in L^{\infty}(\mathbb{R})$.
	
	Moreover, one can show that $\displaystyle\lim_{\varepsilon\to 0^+} \mathfrak{K}_{\alpha}^{\epsilon}(t,x)=\mathfrak{K}_{\alpha}(t,x)$ easily by considering the same argument as $\mathfrak{K}_{\alpha}(t,x)$ for $\mathfrak{K}_{\alpha}^{\epsilon}(t,x)$.
\end{proof}

\begin{theorem}\label{thm:uxL2}
	If $u_0\in C_b^1(\mathbb{R})$, for each $t>0$ and $x\in \mathbb{R}$,
	$$\partial_x u(t,x)\in \mathcal G.$$
\end{theorem}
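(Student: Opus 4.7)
The plan is to combine the chaos expansion $\partial_x u(t,x)=\sum_{\alpha\in\mathcal J}\mathfrak{K}_\alpha(t,x)\xi_\alpha$ from \eqref{eq:uxChaos} with the criterion (noted immediately after Definition \ref{def: space G}) that $F=\sum_\alpha F_\alpha\xi_\alpha\in\mathcal G_\lambda$ whenever $\sum_{n=0}^\infty e^{2\lambda n}\sum_{\alpha\in\mathcal J_n}|F_\alpha|^2<\infty$. Because $\mathcal G=\bigcap_{\lambda\in\mathbb R}\mathcal G_\lambda$, the theorem reduces to showing
\begin{equation*}
\sum_{n=0}^\infty e^{2\lambda n}\sum_{\alpha\in\mathcal J_n}|\mathfrak{K}_\alpha(t,x)|^2<\infty\qquad\mbox{for every }\lambda\in\mathbb R.
\end{equation*}

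My first move is to invoke the integration-by-parts decomposition $\mathfrak{K}_\alpha=a_{n-1}+\sum_{j=1}^{n-1}b_j$ produced in the proof of Lemma \ref{lem:Ke->K}. This decomposition is precisely what transforms the singular endpoint behavior of $\partial_x p(t-r_n,\cdot)$ (whose $L^2(\mathbb R)$-norm scales as $(t-r_n)^{-3/4}$ and is therefore not square-integrable in $r_n$) into bounded quantities, and this is where the hypothesis $u_0\in C_b^1(\mathbb R)$ is consumed. Comparing $a_{n-1}$ with the formula \eqref{eq:CSsol} for $u^{\mathtt{CS}}_\alpha$, one recognizes the ``terminal'' piece $a_{n-1}(t,x;\alpha)$ as precisely the $\alpha$-th chaos coefficient $\tilde u^{\mathtt{CS}}_\alpha(t,x)$ of the Wick SHE \eqref{eq:1dSHE} driven by the initial datum $u_0'\in L^\infty(\mathbb R)$. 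Applying the earlier theorem of Section \ref{sec:FKsol} (establishing $u(t,x)\in\mathcal G$ for $u_0\in L^\infty(\mathbb R)$) with $u_0'$ in place of $u_0$ yields $\tilde u(t,x)\in\mathcal G$, hence
\begin{equation*}
\sum_{n=0}^\infty e^{2\lambda n}\sum_{\alpha\in\mathcal J_n}|a_{n-1}(t,x;\alpha)|^2<\infty\qquad\mbox{for every }\lambda\in\mathbb R.
\end{equation*}

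For each intermediate piece $b_j$, the derivative has landed on a single Hermite factor at position $m=n+1-j$. Using the identity $\sqrt{n!\alpha!}\,\partial_{y_m}\mathfrak{e}_\alpha=\sum_\sigma\prod_{i\ne m}e_{k_{\sigma(i)}}(y_i)\,e'_{k_{\sigma(m)}}(y_m)$, one sees that $b_j=\sqrt{n!}\,\langle F^{\mathtt{CS}}_n(t,x;\cdot),\partial_{y_m}\mathfrak{e}_\alpha\rangle_{L^2(\mathbb R^n)}$, and integration by parts in $y_m$ converts this to $b_j=-\sqrt{n!}\,\langle\partial_{y_m}F^{\mathtt{CS}}_n(t,x;\cdot),\mathfrak{e}_\alpha\rangle$. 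Parseval then gives $\sum_{\alpha\in\mathcal J_n}|b_j|^2\le n!\,\|\partial_{y_m}F^{\mathtt{CS}}_n(t,x;\cdot)\|_{L^2(\mathbb R^n)}^2$. This $L^2$-norm I would bound by a direct Gaussian heat-kernel calculation on the time simplex $\mathbb T^n_{[0,t]}$ in the spirit of \cite[Theorem 4.3]{KL2017}: although $\partial_{y_m}$ introduces an interior singularity of order $(s_{m+1}-s_m)^{-3/4}$ or $(s_m-s_{m-1})^{-3/4}$, the neighbouring Gaussian factors supply enough decay after the $y_m$-integration to produce $\sum_{\alpha\in\mathcal J_n}|b_j(t,x;\alpha)|^2\le\frac{(Ct)^n}{n!}\|u_0\|_\infty^2$.

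Combining the at most $n$ pieces via $|\sum_ja_j|^2\le n\sum_j|a_j|^2$ yields
\begin{equation*}
\sum_{\alpha\in\mathcal J_n}|\mathfrak{K}_\alpha(t,x)|^2\le\frac{n(Ct)^n}{n!}\bigl(\|u_0\|_\infty^2+\|u_0'\|_\infty^2\bigr),
\end{equation*}
and since $n!$ dominates $n\,e^{2\lambda n}(Ct)^n$ for every $\lambda\in\mathbb R$, the weighted series converges, giving $\partial_x u(t,x)\in\mathcal G$. The main obstacle is the heat-kernel bound on $\|\partial_{y_m}F^{\mathtt{CS}}_n\|^2_{L^2}$: one must verify that the interior singularity introduced by $\partial_{y_m}$ is absorbed by the remaining Gaussian factors when integrating over $\mathbf y$ and the simplex, and that the full estimate fits into a $(Ct)^n/n!$ scaling uniformly in $j$ and $n$.
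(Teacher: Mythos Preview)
Your strategy is sound but takes a genuinely different route from the paper. The paper does \emph{not} exploit the integration-by-parts decomposition $\mathfrak{K}_\alpha=a_{n-1}+\sum_j b_j$ to produce the bound; it invokes Lemma~\ref{lem:Ke->K} only at the very end, to justify the passage $\mathfrak{K}^\epsilon_\alpha\to\mathfrak{K}_\alpha$. Instead, the paper works directly with the $\epsilon$-regularized coefficient, writes $\mathfrak{K}^\epsilon_\alpha(t,x)=\sqrt{n!}\,\langle G_n^\epsilon(t,x;\cdot),\mathfrak{e}_\alpha\rangle$ where $G_n^\epsilon$ is the time-simplex integral carrying the undifferentiated endpoint factor $\partial_x p(t-r_n,x-y_n)$, and applies Bessel's inequality once. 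The key computation is then
\[
\int_{\mathbb R}\partial_x p(t-r_n,x-y_n)\,\partial_x p(t-s_n,x-y_n)\,dy_n=(2\pi)^{-1/2}(2t-s_n-r_n)^{-3/2},
\]
which after the factorization $(a+b)^{-1/2}\le 2^{-1/2}a^{-1/4}b^{-1/4}$ becomes $(t-r_n)^{-3/4}(t-s_n)^{-3/4}$, an \emph{integrable} singularity on each simplex. This yields $\sum_{\alpha\in\mathcal J_n}|\mathfrak{K}^\epsilon_\alpha(t,x)|^2\le \|u_0\|_\infty^2\,C^n t^{3n/4-1}n^{-n/2}$ uniformly in $\epsilon$, and the $C^1_b$ hypothesis is consumed only in the limit step via Lemma~\ref{lem:Ke->K}.

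By contrast, you split $\mathfrak{K}_\alpha$ into $n$ pieces and bound each separately: the terminal piece $a_{n-1}$ is recognized as the chaos coefficient of the Wick SHE with datum $u_0'$ (a clean observation), while each $b_j$ is rewritten as $-\sqrt{n!}\,\langle\partial_{y_m}F^{\mathtt{CS}}_n,\mathfrak{e}_\alpha\rangle$ and controlled by $n!\,\|\partial_{y_m}F^{\mathtt{CS}}_n\|_{L^2(\mathbb R^n)}^2$. Your route is workable---the interior singularities $(s_{m+1}-s_m)^{-3/4}$ or $(s_m-s_{m-1})^{-3/4}$ introduced by $\partial_{y_m}$ are integrable on the simplex just as the paper's endpoint singularity is---but it costs you the extra factor $n$ from combining the pieces, and the heat-kernel estimate you flag as ``the main obstacle'' must be carried out for each position $m$ (including the boundary cases $m=1,n$, where the product rule hits $u_{(\mathbf 0)}$ or $p(t-s_n,\cdot)$). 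The paper's single Bessel step with the derivative left on the outermost kernel is shorter, uses only $\|u_0\|_\infty$ in the estimate itself, and recycles verbatim into the H\"older bound of Theorem~\ref{thm:uxspace}; your decomposition would require reworking each of the $n$ pieces for that extension.
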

\begin{proof}
	From \eqref{eq:uxChaos}, we have
	$$\mathbb{E}|\partial_xu(t,x)|^2=|\mathfrak{K}_{(\mathbf{0})}(t,x)|^2+\sum_{n=1}^{\infty} \sum_{\alpha\in \mathcal{J}_n} |\mathfrak{K}_{\alpha}(t,x)|^2.$$
	We note that $\mathfrak{K}_{(\mathbf{0})}(t,x)=\partial_x u_{(\mathbf{0})}(t,x)<\infty$ for each $(t,x)\in [0,T]\times \mathbb{R}$.
	For $|\alpha|=n\geq 1$,
	we use $\mathfrak{K}_{\alpha}^{\epsilon}$:
	For $\epsilon>0$, we notice that (by Fubini lemma)
	\begin{align*}
	\mathfrak{K}_{\alpha}^{\epsilon}(t,x)=\sqrt{n!}\int_{\mathbb R^n}\int_{\mathbb T^n_{[0,t-\epsilon]}}&\partial_{x} p(t-r_n,x-y_n) p(r_n-r_{n-1},y_{n}-y_{n-1})\times \cdots\times p(r_2-r_1,y_2-y_1)\\
	&\cdots\times \mathfrak{e}_{\alpha}(y_1,\dots,y_n)u_{(\mathbf{0})}(r_1,y_1)\; d\mathbf{r}d\mathbf{y}.
	\end{align*}
	Using Bessel's inequality, we have that 
	\begin{align*}
	&\sum_{\alpha\in\mathcal J_n} |\mathfrak{K}_{\alpha}^{\epsilon}(t,x)|^2\leq n!\int_{\mathbb R^n}\bigg(\int_{\mathbb T^n_{[0,t-\epsilon]}}\partial_{x}p(t-r_n,x-y_n) p(r_n-r_{n-1},y_{n}-y_{n-1})\times \\
	&\qquad \qquad \qquad \qquad \cdots \times p(r_2-r_1,y_2-y_1)u_{(\mathbf{0})}(r_1,y_1)d\mathbf{r}\bigg)^2d\mathbf{y}\\
	&\leq n!\|u_0\|_{\infty}^2\int_{\mathbb R^n}\int_{\mathbb T^n_{[0,t-\epsilon]}}\int_{\mathbb T^n_{[0,t-\epsilon]}}\partial_{x} p(t-r_n,x-y_n)\partial_{x} p(t-s_n,x-y_n)p(r_n-r_{n-1},y_{n}-y_{n-1})\\  &\quad \times p(s_n-s_{n-1},y_{n}-y_{n-1})
	\cdots  p(r_2-r_1,y_2-y_1)p(s_2-s_1,y_2-y_1)\; d\mathbf{r}d\mathbf{s} d\mathbf{y}.
	\end{align*}
	Again using Fubini lemma and the semigroup property of $p$, we can write the last expression as
	\begin{align*}
	n!\|u_0\|_{\infty}^2(2\pi)^{-(n-1)/2}&\int_{\mathbb T^n_{[0,t-\epsilon]}}\int_{\mathbb T^n_{[0,t-\epsilon]}}\prod_{k=1}^{n-1}(s_{k+1}+r_{k+1}-s_k-r_k)^{-1/2}\\
	&\quad\cdots\times \int_{\mathbb R}\partial_{x} p(t-r_n,x-y_n)\partial_{x} p(t-s_n,x-y_n)dy_n\; d\mathbf{r}d\mathbf{s}.
	\end{align*}

	Since
	\begin{align*}
	&\int_{\mathbb R}\partial_{x} p(t-r_n,x-y_n)\partial_{x} p(t-s_n,x-y_n)dy_n
	=(2\pi)^{-1/2}(2t-s_n-r_n)^{-3/2},
	\end{align*}
and $(a+b)^{-1/2}\leq 2^{-1/2}a^{-1/4}b^{-1/4}$ for $a,b>0$, we can finally see that
	\begin{align*}
	\sum_{\alpha\in\mathcal J_n} |\mathfrak{K}_{\alpha}^{\epsilon}(t,x)|^2&\leq n!\|u_0\|_{\infty}^2(2\pi)^{-n/2}\left(\int_{\mathbb T^n_{[0,t-\epsilon]}}(t-s_n)^{-3/4}\prod_{k=1}^{n-1}(s_{k+1}-s_k)^{-1/4}d\mathbf{s}\right)^2\\
	&\leq n!\|u_0\|_{\infty}^2(2\pi)^{-n/2}\left(\int_{\mathbb T^n_{[0,t]}}(t-s_n)^{-3/4}\prod_{k=1}^{n-1}(s_{k+1}-s_k)^{-1/4}d\mathbf{s}\right)^2\\
	&\leq \|u_0\|_{\infty}^2C^nt^{3n-4}n^{-n/2}\quad \mbox{for some $C>0$,}
	\end{align*}
	where the last inequality follows from \cite[equation (4.10)]{KL2017}. 
	
	For each $t>0$ and $x\in \mathbb{R}$,
	the convergence is uniform in $\epsilon$ and $\mathfrak{K}_{\alpha}^{\epsilon}(t,x)\to \mathfrak{K}_{\alpha}(t,x)$ as $\epsilon\to 0$ by Lemma \ref{lem:Ke->K}, 
	\begin{align*}
	\sum_{\alpha\in\mathcal J_n} |\mathfrak{K}_{\alpha}(t,x)|^2\leq \|u_0\|_{\infty}^2C^nt^{\frac{3n}{4}-1}n^{-n/2}\quad \mbox{for some constant $C>0$.}
	\end{align*}
	Since $C^nt^{\frac{3n}{4}-1}n^{-n/2}e^{2\lambda n}$ is summable in $n$ for any $\lambda\in\mathbb R$, the conclusion follows from Definition \ref{def: space G}.
\end{proof}

\begin{remark}
	We have the following Feynman-Kac type formula for the spatial derivative of $u$:	
	\begin{align*}
	\partial_xu(t,x)&=\mathbb E^B\left[\mathcal E(L^x(t))\diamond \bigg\{u_0'(B_t^x)+u_0(B_t^x)I_1\left(\partial_xL^x(t)\right)\bigg\}\right],
	\end{align*}
	where $\mathbb{E}^B$ must be understood as a Bochner integral in $(S)^*$.
	Notice that the integrand of $\mathbb{E}^B$ on the right-hand side is in fact a Hida distribution. However, after taking the expectation $\mathbb{E}^B$, which is interpreted as a Bochner integral in $(S)^*$, we end up with a regular random variable  in $\mathcal{G}$. This means that the \textit{white noise integral} (or the Bochner integral in $(S)^*$) has a regularizing effect. 
\end{remark}

\begin{theorem}\label{thm:uxspace}
	Let $0<\varepsilon<1/2$ be arbitrary and assume that $u_0\in C^{3/2}(\mathbb{R})$. Then, for each $t>0$,
	$$\partial_x u(t,\bullet)\in C^{1/2-\varepsilon}(\mathbb{R}).$$
\end{theorem}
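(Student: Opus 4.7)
The plan is to combine the chaos expansion \eqref{eq:uxChaos} with Kolmogorov's continuity criterion and Gaussian hypercontractivity. By orthonormality of $\{\xi_\alpha\}$, for fixed $t>0$ and any $x,y\in\mathbb{R}$,
$$\left\|\partial_x u(t,x)-\partial_x u(t,y)\right\|_{L^2(\mu)}^2=\sum_{\alpha\in\mathcal{J}}\bigl|\mathfrak{K}_\alpha(t,x)-\mathfrak{K}_\alpha(t,y)\bigr|^2,$$
so the first goal is to bound this sum by $C|x-y|^{2\gamma}$ for every $\gamma<1/2$, with $C$ independent of $x,y$. The $\alpha=(\mathbf{0})$ contribution equals $|\partial_x u_{(\mathbf{0})}(t,x)-\partial_x u_{(\mathbf{0})}(t,y)|^2$, which Lemma~\ref{lem:holder}(1) applied with $\gamma=3/2$, $n=0$, $m=1$ bounds by $C|x-y|$ under the hypothesis $u_0\in C^{3/2}(\mathbb{R})$.

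For the chaos layer $n=|\alpha|\ge 1$, I would replicate the Bessel-inequality step of Theorem~\ref{thm:uxL2}, applied this time to the $x$-increment. Since the only $x$-dependence in $\mathfrak{K}_\alpha(t,x)$ sits in the factor $\partial_x p(t-r_n,x-y_n)$, Bessel followed by Fubini and the semigroup identity reduces the estimate to an integral involving
$$I(r_n,s_n):=\int_{\mathbb{R}}\bigl[\partial_x p(t-r_n,x-z)-\partial_x p(t-r_n,y-z)\bigr]\bigl[\partial_x p(t-s_n,x-z)-\partial_x p(t-s_n,y-z)\bigr]\,dz.$$
Plancherel's identity converts this to $\frac{1}{2\pi}\int_{\mathbb{R}}\xi^2 e^{-(2t-r_n-s_n)\xi^2/2}|e^{-i\xi x}-e^{-i\xi y}|^2\,d\xi$, and the interpolation $1-\cos\theta\le C_\gamma|\theta|^{2\gamma}$ valid for any $0\le\gamma\le 1$ yields $I(r_n,s_n)\le C|x-y|^{2\gamma}(2t-r_n-s_n)^{-3/2-\gamma}$. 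Decoupling $r$- and $s$-variables via $(a+b)^{-3/2-\gamma}\le 2^{-3/2-\gamma}a^{-3/4-\gamma/2}b^{-3/4-\gamma/2}$ and $(a+b)^{-1/2}\le 2^{-1/2}a^{-1/4}b^{-1/4}$ reduces the remaining $n$-fold time integral to the Beta-type computation of \cite[Eq.~(4.10)]{KL2017}, ultimately giving
$$\sum_{\alpha\in\mathcal{J}_n}\bigl|\mathfrak{K}_\alpha(t,x)-\mathfrak{K}_\alpha(t,y)\bigr|^2\le\|u_0\|_\infty^2\,C^n\,t^{\eta(n)}\,n^{-n/2}\,|x-y|^{2\gamma}.$$
The key structural point, which is also the main obstacle, is that the iterated time integral converges exactly when $3/4+\gamma/2<1$, i.e., $\gamma<1/2$; this is both the source of the optimal spatial exponent and the sharp constraint.

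Summing over $n$ produces the target $L^2(\mu)$-estimate. To upgrade to $L^p(\mu)$, I apply Nelson's hypercontractivity to the $n$-th chaos component $J_n$ of $\partial_x u(t,x)-\partial_x u(t,y)$: $\|J_n\|_{L^p(\mu)}\le(p-1)^{n/2}\|J_n\|_{L^2(\mu)}$, and the factor $(p-1)^{n/2}$ is tamed by the $n^{-n/4}$ arising from the $L^2$ bound above (root test), so $\|\partial_x u(t,x)-\partial_x u(t,y)\|_{L^p(\mu)}\le C_{p,t,\gamma}|x-y|^\gamma$ for every $p\ge 2$. Kolmogorov's continuity theorem then yields a modification of $\partial_x u(t,\cdot)$ that is locally Hölder continuous of any exponent below $\gamma-1/p$; since $\gamma<1/2$ and $p$ are arbitrary, the exponent can be taken arbitrarily close to $1/2$. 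Finally, the uniform-in-$x$ bound from Theorem~\ref{thm:uxL2} together with the boundedness of $\partial_x u_{(\mathbf{0})}$ supplies the $\sup_x|\partial_x u(t,x)|$ component of the Hölder norm and upgrades the conclusion to global Hölder continuity on $\mathbb{R}$, giving $\partial_x u(t,\cdot)\in C^{1/2-\varepsilon}(\mathbb{R})$.
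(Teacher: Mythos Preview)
Your proposal is correct and follows essentially the same route as the paper: chaos expansion, Bessel inequality layer by layer, the key heat-kernel increment estimate yielding $C|x-y|^{2\gamma}(2t-r_n-s_n)^{-3/2-\gamma}$ with the sharp constraint $\gamma<1/2$, hypercontractivity to pass to $L^p$, and Kolmogorov. The only noticeable differences are cosmetic: you compute the kernel integral $I(r_n,s_n)$ via Plancherel and the bound $1-\cos\theta\le C_\gamma|\theta|^{2\gamma}$, whereas the paper evaluates the Gaussian integral explicitly and uses $1-e^{-z}\le z^\gamma$; and the paper routes the Bessel step through the approximants $\mathfrak{K}_\alpha^\epsilon$ of Lemma~\ref{lem:Ke->K} before letting $\epsilon\to0$, a technical justification for Fubini that you should make explicit when writing the argument out in full.
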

\begin{proof}
Let $p>1$, $t>0$ and $x\in \mathbb{R}$. By \cite[Proposition 2.1]{KL2017}, we have
\begin{align}\label{eq:EpE2}
\left(\mathbb{E}\left|\partial_xu(t,x+h)-\partial_xu(t,x)\right|^p\right)^{1/p}=\sum_{n=0}^{\infty} (p-1)^{n/2}\left(\sum_{\alpha\in \mathcal{J}_n} |\mathfrak{K}_{\alpha}(t,x+h)-\mathfrak{K}_{\alpha}(t,x)|^2\right)^{1/2}.
\end{align}
For $|\alpha|=0$, by Lemma \ref{lem:holder}, we have
$$\mathfrak{K}_{(\mathbf{0})}(t,\bullet)=\partial_x u_{(\mathbf{0})}(t,\bullet)\in C^{1/2}(\mathbb{R}).$$
For $|\alpha|=n\geq 1$,
similarly to the proof of Theorem \ref{thm:uxL2}, we can get
\begin{align*}
&\sum_{\alpha\in \mathcal{J}_n} \left|\mathfrak{K}_{\alpha}^{\epsilon}(t,x+h)-\mathfrak{K}_{\alpha}^{\epsilon}(t,x)\right|^2\\
&\leq n!\int_{\mathbb R^n}\bigg(\int_{\mathbb T^n_{[0,t-\epsilon]}}\left(\partial_{x}p(t-r_n,x+h-y_n)-\partial_{x}p(t-r_n,x-y_n)\right) p(r_n-r_{n-1},y_{n}-y_{n-1})\times \\
&\qquad \qquad \qquad \qquad \cdots \times p(r_2-r_1,y_2-y_1)u_{(\mathbf{0})}(r_1,y_1)d\mathbf{r}\bigg)^2d\mathbf{y}\\
&\leq 	
n!\|u_0\|_{\infty}^2(2\pi)^{-(n-1)/2}\int_{\mathbb T^n_{[0,t-\epsilon]}}\int_{\mathbb T^n_{[0,t-\epsilon]}}\prod_{k=1}^{n-1}(s_{k+1}+r_{k+1}-s_k-r_k)^{-1/2}\\
&\quad\times \int_{\mathbb R}\left(\partial_{x} p(t-r_n,x+h-y_n)-\partial_{x} p(t-r_n,x-y_n)\right)\\
&\qquad\qquad\qquad\qquad\qquad\qquad\quad \cdots\times\left(\partial_{x} p(t-s_n,x+h-y_n)-\partial_{x} p(t-s_n,x-y_n)\right)dy_n\; d\mathbf{r}d\mathbf{s}.
\end{align*}
We next compute
\begin{align*}
\int_{\mathbb R}\partial_{x}p(t_1,x_1-z) \partial_{x} p(t_2,x_1-z)dz&=\frac{1}{2\pi t_1^{3/2}t_2^{3/2}}\int_{\mathbb{R}} (x_1-z)(x_2-z)e^{-\frac{(x_1-z)^2}{2t_1}-\frac{(x_2-z)^2}{2t_2}}dz\\
&=\frac{1}{2\pi t_1^{3/2}t_2^{3/2}}\int_{\mathbb{R}} z\left(z-(x_1-x_2)\right)e^{-\frac{z^2}{2t_1}-\frac{(z-(x_1-x_2))^2}{2t_2}}dz\\
&=\frac{e^{-\frac{(x_1-x_2)^2}{2(t_1+t_2)}}}{2\pi t_1^{3/2}t_2^{3/2}}\int_{\mathbb{R}} (z^2-(x_1-x_2)z)e^{-\frac{(t_1+t_2)}{2t_1t_2}\left(z-\frac{(x_1-x_2)t_1}{t_1+t_2}\right)^2}dz\\
&=\frac{1}{\sqrt{2\pi}} e^{-\frac{(x_1-x_2)^2}{2(t_1+t_2)}} (t_1+t_2)^{-3/2}\left(1-\frac{(x_1-x_2)^2}{(t_1+t_2)}\right).
\end{align*}
The last equality can be verified using the mean and variance of a normal distribution $N\left(\frac{(x-y)t_1}{t_1+t_2},\frac{t_1t_2}{t_1+t_2}\right)$.
Then, we can easily check, using the fact $1-e^{-z}\leq z^{\gamma}$ for any $z\geq 0$ and $0<\gamma\leq 1$,
\begin{align}\label{eq:pxpxh}
&\int_{\mathbb R}\left(\partial_{x} p(t-r_n,x+h-y_n)-\partial_{x} p(t-r_n,x-y_n)\right)\nonumber\\
&\qquad\qquad\qquad\qquad\qquad\qquad\quad \times\left(\partial_{x} p(t-s_n,x+h-y_n)-\partial_{x} p(t-s_n,x-y_n)\right)dy_n\nonumber\\
&=\sqrt{\frac{2}{\pi}}(2t-s-r)^{-3/2}\left(1-e^{-\frac{h^2}{2(2t-s-r)}}+\frac{h^2e^{-\frac{h^2}{2(2t-s-r)}}}{2t-s-r}\right)\leq Ch^{2\gamma} (2t-s-r)^{-3/2-\gamma}.
\end{align}
At this point, we restrict $0<\gamma<1/2$ so that \eqref{eq:pxpxh} is integrable both in $s$ and $r$ variables near $t$.

This leads to
\begin{align*}
&\sum_{\alpha\in \mathcal{J}_n} \left|\mathfrak{K}_{\alpha}^{\epsilon}(t,x+h)-\mathfrak{K}_{\alpha}^{\epsilon}(t,x)\right|^2\leq h^{2\gamma} \|u_0\|_{\infty}^2C^n(\gamma)t^{\frac{3n}{4}-1-\frac{\gamma}{2}}n^{-n/2},
\end{align*}
with $0<\gamma<1/2$ for some constant $C(\gamma)>0$ depending only on $\gamma$. 

After taking $\epsilon\to 0$, the desired result follows from \eqref{eq:EpE2} and the Kolmogorov continuity theorem.
\end{proof}

\begin{remark}
	Under the same initial condition in Theorem \ref{thm:uxspace}, we can achieve the optimal temporal regularity of $\partial_x u$ in a similar manner, i.e., $\partial_x u(\bullet,x)\in C^{1/4-\varepsilon}\left([\varepsilon_0,T]\right)$ for every $x\in \mathbb{R}$, $0<\varepsilon_0<T$, and $0<\varepsilon<1/4$.
\end{remark}



\bibliographystyle{alpha}
\def\cprime{$'$}

\end{document}